\newcommand{\DG}{\mathrm{DG}}
\newcommand{\Stab}{\mathrm{Stab}}
\newcommand{\Top}{\mathbf{top}(\Delta)}
\newcommand{\la}{\langle}
\newcommand{\ra}{\rangle}
\renewcommand{\P}{\mathcal{P}}
\newtheorem{theorem}{Theorem}[section]
\newtheorem{lemma}[theorem]{Lemma}
\newtheorem{cor}[theorem]{Corollary}
\newtheorem{proposition}[theorem]{Proposition}
\theoremstyle{definition}
\newtheorem{definition}[theorem]{Definition}
\newtheorem{remark}[theorem]{Remark}
\newcommand{\call}{{\mathcal L}}
\newcommand {\N}{\mathbb{N}} %% positive integers
\newcommand {\R}{\mathbb{R}} %% reals
\newcommand {\iv}{^{-1}}
\newcommand{\D}{\Delta}
\newcommand{\ini}{\iota(\Delta)}
\newcommand{\ter}{\tau(\Delta)}
\numberwithin{equation}{section}
\newcounter{AbcT}
\newcommand{\La}{\mathcal{L}}
\newcommand{\G}{\mathcal{G}}
\newcommand{\nc}{\newcommand}
\nc{\meet}{\wedge}
\nc{\op}{\operatorname}\nc{\FP}{\op{FP}}\nc{\FS}{\op{FS}}\nc{\FPhat}{\widehat{\op{FP}}}
\newtheorem {Theoremintro}    {Theorem}
\newtheorem {Corollaryintro}[Theoremintro]    {Corollary}
\newtheorem {Theorem}    {Theorem}[section]
\newtheorem* {Theorem2}    {Theorem 2}
\newtheorem {Problem}    [Theorem]{Problem}
\newtheorem {Lemma}      [Theorem]    {Lemma}
\newtheorem {Example}      [Theorem]    {Example}
\theoremstyle{remark}
\newtheorem {Remark}		 [Theorem]    {\bf{Remark}}
\newcommand{\topp}{{\bf top}}
\newcommand{\bott}{{\bf bot}}
\newcommand{\1}{\mathbf{1}}
\newcommand{\arr}{\overrightarrow}
\begin{document}

\title{On Jones' subgroup of R. Thompson group $F$}

%\author{Gili Golan}
%\address{Bar-Ilan University}
%\email{gili.golan@math.biu.ac.il}
%
%\author{Mark Sapir \thanks{The
%research was supported in part by the NSF grants DMS 1418506,
%DMS	1318716 and by the BSF grant 2010295.}}
%\address{Vanderbilt University}
%\email{m.sapir@vanderbilt.edu}

\author{Gili Golan, %\thanks{The research was partially supported by BSF T-2012-238 and the Ministry of Science and Technology of Israel.}
 Mark Sapir\thanks{The
research was supported in part by the NSF grants DMS 1418506,
DMS	1318716 and by the BSF grant 2010295.}}
\maketitle
\abstract{Recently Vaughan Jones showed that the R. Thompson group $F$ encodes in a natural way all knots and links in $\R^3$, and a certain subgroup $\overrightarrow F$ of $F$ encodes all oriented knots and links. We answer several questions of Jones about $\overrightarrow F$. In particular we prove that the subgroup $\overrightarrow F$ is generated by $x_0x_1, x_1x_2, x_2x_3$ (where $x_i,i\in \N$ are the standard generators of $F$) and is isomorphic to $F_3$, the analog of $F$ where all slopes are powers of $3$ and break points are $3$-adic rationals. We also show that $\overrightarrow F$ coincides with its commensurator. Hence the linearization of the permutational representation
of $F$ on $F/\overrightarrow F$ is irreducible. We show how to replace $3$ in the above results by an arbitrary $n$, and to construct a series of irreducible representations of $F$ defined in a similar way. Finally we analyze Jones' construction and deduce that the Thompson index of a link is linearly bounded in terms of the number of crossings in a link diagram.}

\tableofcontents

\section{Introduction}

A recent result of Vaughan Jones \cite{Jo} shows that Thompson group $F$ encodes in a natural way all links (this construction is presented in Section \ref{s:6} below). A subgroup of $F$, called by Jones the \emph{directed Thompson group} $\overrightarrow F$, encodes all oriented links.
In order to define $\overrightarrow F$, Jones associated with every element $g$ of $F$ a graph $T(g)$ using the description of elements of $F$ as pairs of binary trees  (see Section \ref{sec:graph} for details). The group $\overrightarrow F$ is the set of all elements in $F$ for which the associated graph $T(g)$ is bipartite. Jones asked for an abstract description of the subgroup $\overrightarrow F$. For example, it is not clear from the definition whether or not $\overrightarrow F$ is finitely generated.

We define the graph $T(g)$ in a different (but equivalent) way. By \cite{GS} $F$ is a diagram group. For every diagram $\Delta$ in $F$ the graph $T(\Delta)$ is a certain subgraph of $\Delta$. %It follows from the definition that if $\Delta_1$ and $\Delta_2$ are equivalent diagrams then $T(\Delta_1)$ is bipartite iff $T(\Delta_2)$ is bipartite.
 Then, the subgroup of $F$ composed of all reduced diagrams $\Delta$ in $F$ with $T(\Delta)$ bipartite is Jones' subgroup $\overrightarrow F$. Using this definition we
 give several descriptions of $\overrightarrow F$. Recall that for every $n\ge 2$ one can define a ``brother'' $F_n$ of $F=F_2$ as the group of all piecewise linear increasing homeomorphisms of the unit interval where all slopes are powers of $n$ and all breaks of the derivative occur at $n$-adic fractions, i.e., points of the form $\frac a{n^k}$ where $a, k$ are positive integers \cite{B}. It is well known that $F_n$ is finitely presented for every $n$ (a concrete and easy presentation can be found in \cite{GS}).

%In this paper we will define the graph associated with an element of $F$ in a different (but equivalent) way. The definition will use the description of $F$ as a diagram group due to Guba and Sapir \cite{GS}. Using this definition we shall prove the following characterizations of the group $\overrightarrow F$.

\begin{Theoremintro}\label{thm:1}
Jones' subgroup $\overrightarrow F$ is generated by elements $x_0x_1,$ $x_1x_2, x_2x_3$ where $x_i,i\in \N,$ are the standard generators of $F$. It is isomorphic to $F_3$ and coincides with the smallest subgroup of $F$ which contains $x_0x_1$ and is closed under addition (which is a natural binary operation on $F$, see Section \ref{sec:fdg}).
\end{Theoremintro}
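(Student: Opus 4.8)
Write $a=x_0x_1$, $b=x_1x_2$, $c=x_2x_3$, and more generally $g_i=x_ix_{i+1}$ for $i\in\N$, so that $g_{i+1}=\phi(g_i)$ where $\phi\colon x_i\mapsto x_{i+1}$ is the shift endomorphism of $F$. The plan has four stages. First I would check that $a,b,c\in\overrightarrow F$ by drawing their reduced tree--pair diagrams and verifying directly that the graphs $T(\Delta)$ are bipartite; since $\overrightarrow F$ is a subgroup this yields $\langle a,b,c\rangle\subseteq\overrightarrow F$. Along the way I would record the identities $g_jg_i=g_ig_{j+2}$ for $i<j$, which follow from $x_jx_i=x_ix_{j+1}$ inside $F$ (for example $g_1g_0=x_1x_2x_0x_1=x_0x_1x_3x_4=g_0g_3$). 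These are precisely the defining relations of $F_3$ in its standard infinite presentation $\langle y_0,y_1,\dots\mid y_jy_i=y_iy_{j+2},\ i<j\rangle$, and they give $g_{j+2}=g_i^{-1}g_jg_i$; hence every $g_i$ already lies in $\langle g_0,g_1,g_2\rangle=\langle a,b,c\rangle$, and there is a surjection $F_3\twoheadrightarrow\langle a,b,c\rangle$ sending $y_i\mapsto g_i$.

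The crux is the reverse inclusion $\overrightarrow F\subseteq\langle a,b,c\rangle$ together with the injectivity of the above surjection; I would obtain both at once from a single combinatorial lemma. The lemma should translate Jones' condition ``$T(\Delta)$ is bipartite'' into the statement that the reduced tree pair of $\Delta$ is the binarisation of a pair of ternary trees, each ternary caret being realised by two applications of $x=x^2$ (that is, by the derivation $x\Rightarrow x^3$). Equivalently, $\overrightarrow F$ is the diagram group $\DG(\langle x\mid x=x^3\rangle,x)\cong F_3$, and a $2$--colouring of $T(\Delta)$ is exactly the bookkeeping that groups the binary carets of $\Delta$ into ternary ones. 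Proving this lemma -- in particular checking that the correspondence is compatible with reduction of diagrams, so that it is a bijection on reduced elements respecting multiplication -- is where the real work lies and is the step I expect to be the main obstacle.

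Granting the lemma, both remaining assertions follow formally. The bijection $\overrightarrow F\leftrightarrow\{\text{reduced ternary tree pairs}\}$ is a group isomorphism onto $F_3$ carrying $g_i$ to the standard generator $y_i$; in the ternary model $F_3$ is generated by $y_0,y_1,y_2$ by the usual peeling argument for Thompson--type groups (induct on the number of carets, removing an outermost caret of the top or bottom ternary tree, which corresponds to multiplying $\Delta$ by a suitable $g_i^{\pm1}$ and strictly decreasing complexity). This simultaneously proves $\overrightarrow F=\langle a,b,c\rangle$, makes the surjection $F_3\twoheadrightarrow\langle a,b,c\rangle$ injective, and hence gives $\overrightarrow F\cong F_3$.

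Finally, for the characterisation by addition, recall that the sum $f+g$ acts as $f$ on $[0,\tfrac12]$ and as $g$ on $[\tfrac12,1]$, so that $\phi(f)=1+f$ and therefore $b=1+a$ and $c=1+b$. Consequently any subgroup of $F$ that contains $a=x_0x_1$ and is closed under addition must contain $b$ and $c$, hence all of $\langle a,b,c\rangle=\overrightarrow F$. Conversely $\overrightarrow F$ itself is closed under addition: the graph $T(\Delta_1+\Delta_2)$ is assembled from the two bipartite graphs $T(\Delta_1)$ and $T(\Delta_2)$ by adjoining the controlled connecting edges coming from the single binary caret at the root, and one checks these create no odd cycle, so bipartiteness is preserved. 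As $\overrightarrow F$ contains $x_0x_1$, it is the smallest addition--closed subgroup with this property, which completes the proof.
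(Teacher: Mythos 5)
Your stages 1, 3, and 4 are sound: the relations $g_jg_i=g_ig_{j+2}$ for $i<j$ do hold (they follow from $x_jx_i=x_ix_{j+1}$ exactly as you compute), they give a surjection $F_3\twoheadrightarrow\langle a,b,c\rangle$, and the addition argument ($b=\1\oplus a$, $c=\1\oplus b$, together with closure of $\overrightarrow F$ under $\oplus$) is correct. But the proposal has a genuine gap, and you name it yourself: the ``combinatorial lemma'' identifying $\overrightarrow F$ with $\DG(\langle x\mid x=x^3\rangle,x)$ via binarisation of ternary carets is where the entire content of the theorem sits, and you do not prove it. The delicate direction is: given a reduced diagram $\Delta$ with $T(\Delta)$ bipartite, show that its binary cells group coherently into ternary carets in $\Delta^+$ and $\Delta^-$ simultaneously, and that this correspondence takes reduced ternary diagrams to reduced binary diagrams and conversely, so that it is a bijection on reduced elements. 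That is not a routine verification; it is essentially equivalent to the inclusion $\overrightarrow F\subseteq\langle a,b,c\rangle$ you are trying to establish. Note also that half of what you want from the lemma is available without it: injectivity of $F_3\twoheadrightarrow\langle a,b,c\rangle$ already follows from the fact that all proper homomorphic images of $F_3$ are Abelian \cite{B}, since $x_0x_1$ and $x_1x_2$ do not commute. So your lemma is really carrying only one load --- the hard inclusion --- and that is precisely the part left unproven.

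For comparison, the paper proves that inclusion by induction on the length of the normal form (Lemma \ref{lm2}): for a reduced $\Delta\in\overrightarrow F$, either $T(\Delta)$ has no edge $(0,j)$ with $j>1$, in which case $\Delta=\1\oplus\Delta'$ and induction applies, or bipartiteness together with reducedness forces the normal form of $\Delta$ to begin with $x_0^nx_1$ (the key graph-theoretic step: the top edge of the bottom-most cell of the prefix $x_0^n$ cannot join vertex $0$ to vertex $2$, else $\Delta$ would contain a dipole). One then cancels $x_0x_1$ on the left if $n=1$, or replaces $x_0^nx_1$ by $x_2^{n-1}$ using the elements $x_0^{n+1}x_1x_2^{-n}=(x_0x_1)^{n+1}(x_2x_3)^{-n}$, which are shown to lie in the addition-closed subgroup $H$ by repeatedly summing with $\1$ on the right; either way the normal form shortens and induction finishes the argument (Lemmas \ref{lm3'}, \ref{lm3}), after which the isomorphism with $F_3$ comes cheaply as above (Lemma \ref{lm4}). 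Your ternary-diagram picture is true as a statement --- it appears in Section \ref{sec:5} of the paper as the natural embedding of $F_3$ into $F$ by caret replacement --- but there its surjectivity onto $\overrightarrow F$ is deduced from the generation result, not proved directly. So to complete your route you must either supply a genuinely new combinatorial proof of your lemma or fall back on a normal-form induction of the paper's type.
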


This theorem implies the following characterization of $\overrightarrow F$ which can be found in \cite{Jo}.

\begin{Theoremintro}\label{thm:2}
Jones' subgroup $\overrightarrow F$ is the stabilizer of the set of dyadic fractions from the unit interval $[0,1]$ with odd sums of digits, under the standard action of $F$ on the interval $[0,1]$.
\end{Theoremintro}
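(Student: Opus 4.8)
The plan is to translate membership in $\Stab(S)$ into a combinatorial condition on tree pairs and then match it against the descriptions of $\overrightarrow{F}$ supplied by Theorem~\ref{thm:1}. Recall that $g\in F$ with reduced tree pair $(T_-,T_+)$ acts on binary expansions by prefix replacement: if $u$ is a leaf of $T_-$ and $v$ the corresponding leaf of $T_+$, then $g(0.uw)=0.vw$ for every finite binary word $w$. Writing $\sigma(u)\in\{0,1\}$ for the number of $1$'s in $u$ modulo $2$, the parity of the digit sum of $0.uw$ is $\sigma(u)+\sigma(w)\pmod 2$, so under $g$ it changes by the constant $\sigma(v)-\sigma(u)$. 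Hence $g(S)=S$ if and only if $\sigma(u)\equiv\sigma(v)\pmod 2$ for every corresponding pair of leaves (for the forward direction, choose $w$ so that $0.uw$ lies in $S$); equivalently, the left-to-right sequences of leaf-parities of $T_-$ and of $T_+$ coincide. My first step would be to set up this dictionary carefully, including the routine remark that one uses the terminating expansion of a dyadic so that shared endpoints cause no ambiguity.

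For the inclusion $\overrightarrow{F}\subseteq\Stab(S)$ I would invoke the last clause of Theorem~\ref{thm:1}, that $\overrightarrow{F}$ is the smallest subgroup of $F$ containing $x_0x_1$ and closed under addition. Two checks then suffice. First, $x_0x_1\in\Stab(S)$: its reduced tree pair has domain leaves $0,10,110,111$ and range leaves $00,010,011,1$, whose parity sequences are both $(0,1,0,1)$. Second, $\Stab(S)$ is closed under addition: if $f+g$ acts as $f$ on $[0,1/2]$ and as $g$ on $[1/2,1]$, then its leaves are the words $0u$ coming from $f$ and $1w$ coming from $g$, with $\sigma(0u)=\sigma(u)$ and $\sigma(1w)=\sigma(w)+1$; the extra $1$ on the right half cancels within each leaf pair, so by the dictionary $f+g\in\Stab(S)$ exactly when $f,g\in\Stab(S)$. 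As $\Stab(S)$ is a subgroup containing $x_0x_1$ and closed under addition, Theorem~\ref{thm:1} yields $\overrightarrow{F}\subseteq\Stab(S)$.

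The reverse inclusion $\Stab(S)\subseteq\overrightarrow{F}$ is where the real work lies, and I would attempt it by induction on the number $n$ of leaves of the reduced tree pair of $g\in\Stab(S)$. If $g$ fixes $1/2$, then $g=p+q$ splits over the two halves, and by the dictionary $p,q\in\Stab(S)$ each have fewer than $n$ leaves; the induction hypothesis and closure under addition then give $g\in\overrightarrow{F}$. If $g(1/2)\neq 1/2$, observe that $1/2\in S$ forces $b:=g(1/2)\in S$, and I would left-multiply by some $h\in\overrightarrow{F}$ with $h(b)=1/2$, so that $hg$ fixes $1/2$ and $g=h^{-1}(p+q)$ with $p,q\in\Stab(S)$. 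This reduces everything to a transitivity statement -- that $\overrightarrow{F}$ acts transitively on the odd-digit-sum dyadics of $(0,1)$ -- together with a guarantee that the normalizing factor $h$ can be chosen to simplify, rather than inflate, the tree pair of $g$. I expect this complexity control to be the crux: $h$ must be a short product of the generators $x_ix_{i+1}$ whose multiplication genuinely cancels carets. If the inductive reduction proves unwieldy, a clean alternative is to prove directly that the leaf-parity condition of the first paragraph is equivalent to bipartiteness of the graph $T(\Delta)$; by the definition of $\overrightarrow{F}$ recalled in the introduction this identifies $\Stab(S)$ with $\overrightarrow{F}$ at once, and I would keep it in reserve.
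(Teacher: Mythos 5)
Your first two paragraphs are correct and coincide with the paper's treatment of the easier inclusion: the paper also deduces $\overrightarrow F\subseteq\Stab(S)$ from Theorem \ref{thm:1}, by writing out the action of $x_0x_1$ on binary expansions and checking that $\Stab(S)$ is closed under $\oplus$. Your leaf-parity dictionary is a sound (and equivalent) reformulation of the digit-sum bookkeeping the paper does with break points.

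The genuine gap is the converse inclusion $\Stab(S)\subseteq\overrightarrow F$, which your proposal never actually proves. Your primary route rests on two claims you do not establish: (i) that $\overrightarrow F$ acts transitively on the odd-digit-sum dyadics in $(0,1)$, and (ii) that the element $h\in\overrightarrow F$ with $h(g(1/2))=1/2$ can be chosen so that passing from $g$ to the product with $h$ does not inflate the tree pair, so that the induction on the number of leaves can close. Claim (i) is true but nontrivial: at this point in the argument the only descriptions of $\overrightarrow F$ available are bipartiteness of $T(\Delta)$ and Theorem \ref{thm:1}, so transitivity would require explicitly constructing products of the generators $x_ix_{i+1}$ realizing arbitrary prefix substitutions within the parity class, which you do not do. Claim (ii) -- which you yourself call ``the crux'' -- is exactly the kind of delicate normal-form control that the paper needs a full inductive argument to achieve in the proof of Lemma \ref{lm2}, and there is no reason a generic choice of $h$ decreases complexity. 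Your fallback (show that the leaf-parity condition is equivalent to bipartiteness of $T(\Delta)$) is precisely the paper's route, but it is where all the work of this direction lives and you leave it unexecuted: the paper proves, via Lemmas \ref{<} and \ref{digits}, that the length of the top (resp.\ bottom) path in $T(\Delta)$ from $\iota(\Delta)$ to an inner vertex equals the digit sum of the corresponding break point of the top (resp.\ bottom) subdivision; hence for $\Delta\in\Stab(S)$ these two lengths have equal parity at every inner vertex (Corollary \ref{bipartite}), and this common parity is a proper $2$-coloring of $T(\Delta)$, proving bipartiteness. Until either (i)+(ii) or this path-length/digit-sum lemma is supplied, you have established only one of the two inclusions.
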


As a corollary from Theorem \ref{thm:2} we get the following statement answering a question by Vaughan Jones.

\begin{Corollaryintro}\label{thm:3}
Jones' subgroup $\overrightarrow F$ coincides with its commensurator in $F$.
\end{Corollaryintro}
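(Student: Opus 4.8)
The inclusion $\overrightarrow F \subseteq \mathrm{Comm}_F(\overrightarrow F)$ is automatic, so the plan is to prove the reverse: if $g \in F \setminus \overrightarrow F$ then $g \notin \mathrm{Comm}_F(\overrightarrow F)$. Write $U$ for the set of dyadic rationals in the open interval $(0,1)$, let $X \subseteq U$ be the subset with odd digit sum and $Y = U \setminus X$ the subset with even digit sum (the endpoints $0,1$ are fixed by all of $F$ and may be ignored). By Theorem~\ref{thm:2}, $\overrightarrow F = \Stab_F(X)$, and conjugation gives $g\,\overrightarrow F\,g^{-1} = \Stab_F(gX)$. Hence $K := \overrightarrow F \cap g\,\overrightarrow F\,g^{-1} = \Stab_{\overrightarrow F}(gX)$ fixes both $X$ and $gX$ setwise, and therefore fixes the symmetric difference $D := X \triangle gX$ setwise. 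The whole argument reduces to showing that $K$ has infinite index in $\overrightarrow F$ whenever $g \notin \overrightarrow F$.

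First I record that $D$ is none of the four $\overrightarrow F$-invariant sets $\emptyset, X, Y, U$. Since $g \notin \overrightarrow F$ we have $gX \ne X$, so $D \ne \emptyset$; the identities $X \triangle gX = X \Leftrightarrow gX = \emptyset$ and $X \triangle gX = Y \Leftrightarrow gX = U$ rule out $D = X$ and $D = Y$, because $gX$ is an infinite proper subset of $U$. Finally $D = U$ would force $gX = Y$, which is impossible: near $0$ every element of $F$ is linear of the form $x \mapsto 2^k x$, a map that merely shifts binary expansions and hence preserves digit-sum parity, so $g$ sends all sufficiently small points $1/2^n \in X$ back into $X$; thus $gX \cap X$ is infinite and $gX \ne Y$. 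Consequently $D \notin \{\emptyset, X, Y, U\}$.

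Next I use the internal structure $\overrightarrow F \cong F_3$ from Theorem~\ref{thm:1}. As for $F$ itself, every nontrivial normal subgroup of $F_3$ contains the commutator subgroup $[F_3,F_3]$ (which is simple with abelian quotient $\Z^3$); applying this to the normal core of a finite-index subgroup shows that every finite-index subgroup of $\overrightarrow F$ contains $[\overrightarrow F,\overrightarrow F]$. So it suffices to prove that $[\overrightarrow F,\overrightarrow F]$ does not fix $D$: then $K \not\supseteq [\overrightarrow F,\overrightarrow F]$, hence $K$ has infinite index in $\overrightarrow F$, and $g \notin \mathrm{Comm}_F(\overrightarrow F)$, as desired. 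To see that $[\overrightarrow F,\overrightarrow F]$ cannot fix $D$, I will prove that $[\overrightarrow F,\overrightarrow F]$ acts transitively on $X$ and transitively on $Y$. Granting this, any $[\overrightarrow F,\overrightarrow F]$-invariant subset meets each of $X,Y$ in either the empty set or the whole class, so the only invariant sets are $\emptyset, X, Y, U$; since $D$ is none of these, $[\overrightarrow F,\overrightarrow F]$ moves $D$ and we are done.

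The transitivity statement is the heart of the proof and the step I expect to require the most work. I would establish it in two stages. First, using the generators $x_0x_1, x_1x_2, x_2x_3$ and the binary-tree/diagram model of Section~\ref{sec:graph}, one shows that $\overrightarrow F$ itself carries any point of $X$ to any other, and likewise for $Y$: the parity colouring is exactly what the generators of $\overrightarrow F$ respect, and a divide-and-conquer construction produces an element of $\overrightarrow F$ realizing any prescribed colour-respecting dyadic correspondence. Second, I upgrade transitivity from $\overrightarrow F$ to its commutator subgroup by germ-correction: given $p,q$ in the same colour class and $h \in \overrightarrow F$ with $h(p)=q$, I multiply $h$ by elements of $\overrightarrow F$ supported away from $p$ and $q$ (exploiting the self-similarity of $\overrightarrow F$ on dyadic subintervals and its behaviour near the two endpoints) that cancel the class of $h$ in $\overrightarrow F^{\mathrm{ab}} \cong \Z^3$, producing a transporter lying in $[\overrightarrow F,\overrightarrow F]$. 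The main obstacle is thus the colour-respecting transitivity of $\overrightarrow F$ on each class $X$ and $Y$; once this is in hand, the index computation and the elimination of the degenerate cases above finish the proof.
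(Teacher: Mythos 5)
Your reduction is sound as far as it goes. The containment of $\overrightarrow F$ in its commensurator is automatic; by Theorem \ref{thm:2} the subgroup $K=\overrightarrow F\cap g\overrightarrow Fg^{-1}$ stabilizes $D=X\triangle gX$; your elimination of the degenerate cases $D\in\{\emptyset,X,Y,U\}$ is correct (the key observation, that every element of $F$ is linear near $0$ and hence preserves digit sums of sufficiently small dyadics, is precisely the paper's Lemma \ref{remark}); and the passage from Brown's theorem through normal cores to ``every finite-index subgroup of $\overrightarrow F$ contains $[\overrightarrow F,\overrightarrow F]$'' is valid (simplicity of $[F_3,F_3]$ is never needed, only that proper quotients are abelian). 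So the corollary would indeed follow from your claim that the orbits of $[\overrightarrow F,\overrightarrow F]$ on $U$ are exactly $X$ and $Y$. But that claim --- which you yourself identify as the heart --- is exactly what is not proved, and neither of its two stages is routine. Stage one (any colour-respecting correspondence of dyadic tuples is realized by an element of $\overrightarrow F$, hence $\overrightarrow F$ is transitive on each class) is true but appears nowhere in this paper and requires a genuine inductive construction. Stage two conceals a trap: $\overrightarrow F^{\mathrm{ab}}\cong\Z^3$, but germs at the two endpoints see only a rank-two quotient of it. With respect to the generators $y_0=x_0x_1$, $y_1=x_1x_2$, $y_2=x_2x_3$, the $\log_2$-slope at $0$ equals the exponent sum of $y_0$, while the $\log_2$-slope at $1$ equals $-2$ times the total exponent sum; thus the class $(0,1,-1)$ is invisible to both germs, and ``germ-correction'' near the endpoints can only bring your transporter into the kernel of the germ homomorphisms, which contains $[\overrightarrow F,\overrightarrow F]$ with quotient $\Z$ --- it can never cancel the third coordinate. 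To close this one must show that the point stabilizer $\Stab_{\overrightarrow F}(q)$ surjects onto all of $\Z^3$ (for $q=1/2$ this works using $y_1$, $y_2$ and $(x_0x_1)\oplus\1$, which lies in $\overrightarrow F$ by the $\oplus$-closure of Theorem \ref{thm:1}, fixes $1/2$, and has nontrivial germ at $0$), and then transport to general $q$, which again invokes the unproven stage one.

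For comparison, the paper's proof (Theorem \ref{question 2}) avoids transitivity entirely and is much shorter: if $[\overrightarrow F:K]$ were finite, there would exist $r$ with $h^{-1}g^{rk}h\in\overrightarrow F$ for all $k$, where $g=(x_0x_1)^{-1}$; but $g^n$ pushes any dyadic below $2^{-m}$ for $n$ large (Remark \ref{cor}), $h$ preserves digit sums below $2^{-m}$ (Lemma \ref{remark}), so choosing $t\in S$ with $h^{-1}(t)\notin S$ forces $h^{-1}g^{n}h(t)\notin S$ for all large $n$ --- a contradiction. Your strategy, if the transitivity engine were actually supplied, would prove somewhat more (that $X$ and $Y$ are orbits of every finite-index subgroup of $\overrightarrow F$), but as written it has a genuine gap at its central step.
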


As noted in \cite{Jo}, this implies that the linearization of the permutational representation of $F$ on $F/\overrightarrow F$ is irreducible (see \cite{Mac}).

In \cite{Jo}, Jones introduced the Thompson index of a link which can be defined as the smallest number of leaves  of a tree diagram (= the number of vertices minus one in the semigroup diagram) representing that link. The construction in \cite{Jo} does not give an estimate of the Thompson index. But analyzing and slightly modifying that construction (using some results from Theoretical Computer Science) we prove that the Thompson index of a link does not exceed 12 times the number of crossings in any link diagram.

The paper is organized as follows. In Section \ref{sec:pre} we give some preliminaries on Thompson group $F$. In Section \ref{sec:graph} we  give Jones' definition of the Thompson graph associated with an element of $F$, we also give the definition in terms of semigroup diagrams \cite{GS} and prove the equivalence of two definitions. In Section \ref{sec:properties} we define Jones subgroup $\overrightarrow F$ and prove Theorems \ref{thm:1} and \ref{thm:2} and Corollary \ref{thm:3}. Section \ref{sec:5} contains generalizations of the previous results for arbitrary $n$. In particular, we show that for every $n\ge 2$, the smallest subgroup of $F$ containing the element $x_0\cdots x_{n-2}$ and closed under addition, is isomorphic to $F_n$, can be characterized in terms of the graphs $T(\Delta)$,  is the intersection of stabilizers of certain sets of binary fractions, and coincides with its commensurator in $F$. Although Theorems \ref{thm:1} and \ref{thm:2} are special cases of the results of Section \ref{sec:5}, the direct proofs of these results are much less technical while ideologically similar, and the original questions of Jones concerned the case $n=3$ only. Thus we decided to keep the proofs of Theorems \ref{thm:1} and \ref{thm:2}. In Section \ref{sec:5} we show how to adapt these proofs to the general case. In Section \ref{s:6}, we analyze Jones' construction and prove the linear upper bound for the Thompson index of a link.
\vskip .5cm

{\bfseries Acknowledgments.} We are grateful to Vaughan Jones for asking questions about $\overrightarrow F$. We are also grateful to Victor Guba and Michael Kaufmann for helpful conversations.

\section{Preliminaries on $F$}\label{sec:pre}

\subsection{$F$ as a group of homeomorphisms}

The most well known definition of the R. Thompson group $F$ is this (see \cite{CFP}): $F$ consists of all piecewise-linear increasing self-homeomorphisms of the unit interval with all slopes powers of $2$ and all break points of the derivative dyadic fractions. The group $F$ is generated by two functions $x_0$ and $x_1$ defined as follows.

%$$x_0(t)=\left\{\begin{array}{l}2t, \quad 0\le t\le \frac{1}{4},\\
%t+\frac14, \quad \frac14\le t\le \frac12,\\
%\frac{t}{2}+\frac12, \quad \frac12\le t\le 1.
 %\end{array}\right.
 %x_1(t)=\left\{\begin{array}{l}t, \quad 0\le t\le \frac12,\\
%2t-\frac12, \quad \frac12\le t\le \frac{5}{8},\\
%t+\frac18, \quad \frac{5}{8}\le t\le \frac34\\
%\frac{t}{2}+\frac12,\quad \frac34\le t\le 1.
 %\end{array}\right.
  %$$
	
	\[
   x_0(t) =
  \begin{cases}
   2t &  :  0\le t\le \frac{1}{4} \\
   t+\frac14       & : \frac14\le t\le \frac12 \\
   \frac{t}{2}+\frac12       & : \frac12\le t\le 1
  \end{cases} 	\qquad	
   x_1(t) =
  \begin{cases}
   t &  : 0\le t\le \frac12 \\
   2t-\frac12       & : \frac12\le t\le \frac{5}{8} \\
   t+\frac18       & : \frac{5}{8}\le t\le \frac34 \\
   \frac{t}{2}+\frac12       & : \frac34\le t\le 1 	
  \end{cases}
\]

%$$x_0(t)=\left\{\begin{array}{l}t/2, \quad 0\le t\le \frac12,\\
%t-\frac14, \quad \frac12\le t\le \frac34,\\
%2t-1, \quad \frac34\le t\le 1.
 %\end{array}\right.
 %x_1(t)=\left\{\begin{array}{l}t, \quad 0\le t\le \frac12,\\
%\frac{t}2+\frac14, \quad \frac12\le t\le \frac34,\\
%t-\frac18, \quad \frac34\le t\le \frac78\\
%2t-1,\quad \frac78\le t\le 1.
 %\end{array}\right.
  %$$

One can see that $x_1$ is the identity on $[0,\frac12]$ and a shrank by the factor of 2 copy of $x_0$ on $[\frac12, 1]$. The composition in $F$ is from left to right.

Equivalently, the group $F$ can be defined using dyadic subdivisions \cite{B}. We call a subdivision of $[0, 1]$ a \emph{dyadic subdivision} if it is obtained by repeatedly cutting intervals in half. If $S_1,S_2$ are dyadic subdivisions with the same number of pieces, we can define a piecewise linear map taking each segment of the subdivision $S_1$ linearly to the corresponding segment of $S_2$. We call such a map a \emph{dyadic rearrangement}. The group $F$ consists of all dyadic rearrangements.

\subsection{$F$ as a diagram group}\label{sec:fdg}

It was shown in \cite[Example 6.4]{GS} that the R. Thompson group $F$ is a diagram group over the semigroup
presentation $\la x\mid x=x^2\ra$.

Let us recall the definition of a {\em diagram group} (see \cite{GS,GS1} for more formal definitions). A (semigroup)
{\em diagram} is a plane directed labeled graph tesselated by cells, defined up to an isotopy of the plane. Each
diagram $\Delta$ has the top path $\topp(\Delta)$, the bottom path $\bott(\Delta)$, the initial and terminal vertices
$\iota(\Delta)$ and $\tau(\Delta)$. These are common vertices of $\topp(\Delta)$ and $\bott(\Delta)$.  The whole
diagram is situated between the top and the bottom paths, and every edge of $\Delta$ belongs to a (directed) path in
$\Delta$ between $\iota(\Delta)$ and $\tau(\Delta)$. More formally, let $X$ be an alphabet. For every $x\in X$ we
define the {\em trivial diagram} $\varepsilon(x)$ which is just an edge labeled by $x$. The top and bottom paths of
$\varepsilon(x)$ are equal to $\varepsilon(x)$, the vertices $\iota(\varepsilon(x))$ and $\tau(\varepsilon(x))$ are the initial and
terminal vertices of the edge. If $u$ and $v$ are words in $X$, a {\em cell} $(u\to v)$ is a plane graph consisting of
two directed labeled paths, the top path labeled by $u$ and the bottom path labeled by $v$, connecting the same points
$\iota(u\to v)$ and $\tau(u\to v)$. %We will call such a cell a $(u,v)$-cell.
 There are three operations that can be applied to diagrams in order to obtain new
diagrams.

1. {\bf Addition.} Given two diagrams $\Delta_1$ and $\Delta_2$, one can identify $\tau(\Delta_1)$ with
$\iota(\Delta_2)$. The resulting plane graph is again a diagram denoted by $\Delta_1+\Delta_2$, whose top (bottom)
path is the concatenation of the top (bottom) paths of $\Delta_1$ and $\Delta_2$. If $u=x_1x_2\ldots x_n$ is a word in
$X$, then we denote $\varepsilon(x_1)+\varepsilon(x_2)+\cdots + \varepsilon(x_n)$ ( i.e. a simple path labeled by $u$)
by $\varepsilon(u)$  and call this diagram also {\em trivial}.

2. {\bf Multiplication.} If the label of the bottom path of $\Delta_1$ coincides with the label of the top path of
$\Delta_2$, then we can {\em multiply} $\Delta_1$ and $\Delta_2$, identifying $\bott(\Delta_1)$ with $\topp(\Delta_2)$.
The new diagram is denoted by $\Delta_1\circ \Delta_2$. The vertices $\iota(\Delta_1\circ \Delta_2)$ and
$\tau(\Delta_1\circ\Delta_2)$ coincide with the corresponding vertices of $\Delta_1, \Delta_2$, $\topp(\Delta_1\circ
\Delta_2)=\topp(\Delta_1),
\bott(\Delta_1\circ \Delta_2)=\bott(\Delta_2)$.

3. {\bf Inversion.} Given a diagram $\Delta$, we can flip it about a horizontal line obtaining a new diagram
$\Delta\iv$ whose top (bottom) path coincides with the bottom (top) path of $\Delta$.

\begin{figure}[h!]
\begin{center} % dgs3.tex
\unitlength=1mm
\special{em:linewidth 0.4pt}
\linethickness{0.4pt}
\begin{picture}(124.41,55.00)
\put(1.00,30.00){\circle*{2.00}}
\put(46.00,30.00){\circle*{2.00}}
\put(1.00,30.00){\line(1,0){45.00}}
\bezier{320}(1.00,30.00)(24.00,55.00)(46.00,30.00)
\bezier{332}(1.00,30.00)(24.00,5.00)(46.00,30.00)
\put(24.00,35.00){\makebox(0,0)[cc]{$\Delta_1$}}
\put(24.00,25.00){\makebox(0,0)[cc]{$\Delta_2$}}
\put(24.00,10.00){\makebox(0,0)[cc]{$\Delta_1\circ\Delta_2$}}
\put(66.00,30.00){\circle*{2.00}}
\put(94.00,30.00){\circle*{2.00}}
\put(123.00,30.00){\circle*{2.00}}
\bezier{164}(66.00,30.00)(80.00,45.00)(94.00,30.00)
\bezier{152}(66.00,30.00)(81.00,17.00)(94.00,30.00)
\bezier{172}(94.00,30.00)(109.00,46.00)(123.00,30.00)
\bezier{168}(94.00,30.00)(110.00,15.00)(123.00,30.00)
\put(80.00,30.00){\makebox(0,0)[cc]{$\Delta_1$}}
\put(109.00,30.00){\makebox(0,0)[cc]{$\Delta_2$}}
\put(94.00,10.00){\makebox(0,0)[cc]{$\Delta_1+\Delta_2$}}
\end{picture}
\end{center}
\caption{The multiplication and addition of diagrams.}
\label{f1}
\end{figure}
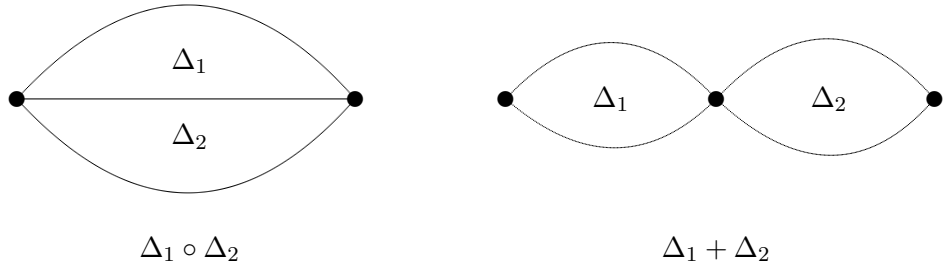

\begin{definition} A diagram over a collection of cells (i.e., a semigroup presentation) $\P$ is any plane
graph obtained from the trivial diagrams and cells of $\P$ by the operations of addition, multiplication and inversion.
If the top path of a diagram $\Delta$ is labeled by a word $u$ and the bottom path is labeled by a word $v$, then we
call $\Delta$ a $(u,v)$-diagram over $\P$.
\end{definition}

Two cells in a diagram form a {\em dipole} if the bottom part of the first cell coincides with the top part of the
second cell, and the cells are inverses of each other. Thus a dipole is a subdiagram of the form $\pi\circ \pi\iv$ where $\pi$ is a cell.
In this case, we can obtain a new diagram by removing the two cells
and replacing them by the top path of the first cell. This operation is called {
\em elimination of dipoles}. The new diagram is called {\em equivalent}
to the initial one. A diagram is called {\em reduced} if it does not contain dipoles. It is proved in \cite[Theorem
3.17]{GS} that every diagram is equivalent to a unique reduced diagram.

Now let $\P=\{c_1,c_2,\ldots\}$ be a collection of cells. The diagram group $\DG(\P,u)$ corresponding to the collection
of cells $\P$ and a word $u$ consists of all reduced $(u,u)$-diagrams obtained from the  cells of $\P$ and
trivial diagrams by using the three operations mentioned above. The product $\Delta_1\Delta_2$ of two  diagrams
$\Delta_1$ and $\Delta_2$ is the reduced diagram obtained by removing all dipoles from $\Delta_1\circ\Delta_2$. The
fact that $\DG(\P,u)$ is a group is proved in \cite{GS}.

\begin{lemma}[See \cite{GS}]\label{lm:F} If $X$
consists of one letter $x$ and $\P$ consists of one cell $x\to x^2$, then the group $\DG(\P,x)$ is the R. Thompson group
$F$.
\end{lemma}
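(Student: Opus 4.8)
The plan is to identify the reduced $(x,x)$-diagrams over $\P=\la x\mid x=x^2\ra$ with pairs of finite planar binary trees having the same number of leaves, and then to match such tree pairs with dyadic rearrangements of $[0,1]$, thereby recovering the group $F$ described in the preliminaries.

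First I would establish a normal form: every $(x,x)$-diagram $\Delta$ can be written as $\Delta=\Delta_+\circ\Delta_-\iv$, where $\Delta_+$ and $\Delta_-$ are \emph{positive} $(x,x^n)$-diagrams, that is, diagrams built from the single cell $x\to x^2$ using only addition and multiplication and no inversions. Since the unique defining cell strictly increases the length of a word, one can push every non-inverted cell above every inverted cell; this rewriting terminates and yields the factorization. A positive $(x,x^n)$-diagram is in turn exactly a rooted planar binary tree with $n$ leaves: the top edge labeled $x$ is the root, and each application of $x\to x^2$ to an edge is a caret attaching two children to a leaf. Thus $\Delta$ determines an ordered pair $(T_+,T_-)$ of planar binary trees with a common number of leaves, read off from $\topp$-to-middle and middle-to-$\bott$.

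Next I would show that elimination of a dipole in $\Delta$ corresponds to cancelling a common caret in $(T_+,T_-)$: a dipole $\pi\circ\pi\iv$ sitting over two consecutive edges of the middle path is precisely a caret occurring in the same leaf position of both $T_+$ and $T_-$, and removing it deletes that caret from both trees. Hence $\Delta$ is reduced iff $(T_+,T_-)$ share no common caret at corresponding leaves, and by uniqueness of reduced diagrams \cite[Theorem 3.17]{GS} each element of $\DG(\P,x)$ is represented by a unique such reduced tree pair. I then define the map to $F$: reading a tree as a dyadic subdivision of $[0,1]$ (each caret halving its interval), the pair $(T_+,T_-)$ gives subdivisions $S_+,S_-$ into equally many pieces, and I assign to $\Delta$ the dyadic rearrangement whose domain subdivision is $S_+$, whose range subdivision is $S_-$, and which is linear from the $i$-th piece of $S_+$ onto the $i$-th piece of $S_-$. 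This is well defined (a common caret refines $S_+$ and $S_-$ simultaneously without changing the map), and it is a bijection onto $F$: surjectivity is the fact that every element of $F$ is such a rearrangement between two subdivisions with equally many pieces, and injectivity is uniqueness of the reduced tree pair.

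The step I expect to be the main obstacle is the homomorphism property, compatible with the left-to-right composition in $F$. To multiply $\Delta_1\Delta_2$ one forms $\Delta_1\circ\Delta_2$, gluing $\bott(\Delta_1)$ to $\topp(\Delta_2)$, and removes dipoles; this amounts to refining the bottom tree of $\Delta_1$ and the top tree of $\Delta_2$ to their common subdivision and then reading off the outer trees. I would verify that this common-refinement procedure is exactly the computation of the composite piecewise-linear map: after refining the range subdivision of $\Delta_1$ and the domain subdivision of $\Delta_2$ to a common intermediate subdivision, the composed rearrangement sends the refined $S_+$ of $\Delta_1$ linearly onto the refined $S_-$ of $\Delta_2$, which is precisely the rearrangement attached to the product diagram. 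Establishing this compatibility — the diagrammatic analog of Thompson's trick of passing to a common subdivision — is the technical heart, after which $\DG(\P,x)\cong F$ follows.
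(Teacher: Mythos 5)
Your outline follows the same route as the proof this paper relies on: the statement is quoted from \cite[Example 6.4]{GS} and never reproved here, but the ingredients you list are exactly the ones the paper recounts in Section \ref{sec:pre} — the decomposition $\Delta=\Delta^+\circ\Delta^-$ of a reduced diagram into positive and negative parts (quoted from \cite{GS1}), the identification of positive parts with binary trees (Section \ref{trees}), and the passage from a tree pair to a dyadic rearrangement (Section \ref{frac}). So the approach is the standard one, and your identification of the homomorphism property (composition via common refinement of the middle trees) as the technical heart is accurate.

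Two points need repair, one cosmetic and one genuine. Cosmetic: the factorization $\Delta=\Delta_+\circ\Delta_-\iv$ holds on the nose only for reduced diagrams, which is the form in which \cite{GS1} and this paper state it; for an arbitrary diagram, ``pushing positive cells above negative ones'' involves cancelling $\pi\iv\circ\pi$ dipoles, so it produces an equivalent diagram of that form rather than the diagram itself. Since elements of $\DG(\P,x)$ are reduced diagrams, this costs nothing, but it should be said. Genuine: your justification of injectivity — ``uniqueness of the reduced tree pair'' — is circular. The uniqueness you get from \cite[Theorem 3.17]{GS} is uniqueness of the reduced diagram inside a diagram-equivalence class; injectivity of the map into $F$ requires the different statement that two \emph{distinct} reduced tree pairs induce \emph{distinct} homeomorphisms, which is precisely what is at stake and does not follow from diagram-level uniqueness. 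The standard repair: once the homomorphism property is proved, injectivity reduces to triviality of the kernel, and that is easy — if a reduced pair $(T_+,T_-)$ induces the identity map, then for every $i$ the $i$-th interval of the top subdivision is carried onto the $i$-th interval of the bottom subdivision, so the two subdivisions coincide and $T_+=T_-$; a pair $(T,T)$ with $T$ nontrivial has a common caret, contradicting reducedness, so the diagram is trivial. Insert that argument (or, alternatively, recover the minimal subdivisions directly from the breakpoints and slopes of the piecewise-linear map), and your outline becomes a complete proof.
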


Since $X$ consists of one letter $x$, we shall omit the labels of the edges of diagrams in $F$. Since all edges are oriented from left to right, we will not indicate the orientation of edges in the pictures of diagrams from $F$.

Thus in the case of the group $F$, the set of cells $\P$ consists of one cell $\pi$ of the form

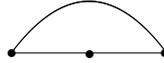
\begin{figure}[h!]
\begin{center}
% This is a LaTeX picture output by TeXCAD.
% File name: [cell.pic].
% Version of TeXCAD: 4.3
% Reference / build: 30-Jun-2012 (rev. 105)
% For new versions, check: http://texcad.sf.net/
% Options on the following lines.
%\grade{\on}
%\emlines{\off}
%\epic{\off}
%\beziermacro{\on}
%\reduce{\on}
%\snapping{\off}
%\pvinsert{% Your \input, \def, etc. here}
%\quality{8.000}
%\graddiff{0.005}
%\snapasp{1}
%\zoom{4.0000}
\unitlength .5mm % = 2.845pt
\linethickness{0.4pt}
\ifx\plotpoint\undefined\newsavebox{\plotpoint}\fi % GNUPLOT compatibility
\begin{picture}(51.75,30.875)(24,20)
\put(49.25,20.25){\line(1,0){42.5}}
\qbezier(49.75,20)(70.5,47.875)(91.25,20.25)
\put(50,20.25){\circle*{2}}
\put(70.75,20){\circle*{2}}
\put(90.75,20.25){\circle*{2}}
\end{picture}
\end{center}
\caption{The cell defining the group $F$.}
\label{fd}
\end{figure}

%Note that $\pi$ has a top edge and two bottom edges; a left bottom edge and a right bottom edge separated by the \emph{bottom vertex}. Similarly, $\pi\iv$ has a bottom edge and two top edges separated by the \emph{top vertex}.

The role of 1 in the group $F$ is played by the trivial diagram $\varepsilon(x)$ which will be denote by $\1$.

Using the addition of diagrams one can define a useful operation of addition on the group $F$: $\Delta_1\oplus \Delta_2 =\pi \circ (\Delta_1+\Delta_2)\circ \pi\iv$. Note that if $\Delta_1, \Delta_2$ are reduced, then so is $\Delta_1\oplus \Delta_2$ unless $\Delta_1=\Delta_2=\1$ in which case $\1\oplus \1=\1$.

The following property of $\oplus$ is obvious:

\begin{lemma}\label{lm0} For every $a,b,c,d\in F$ we have
\begin{equation}\label{e1}(a\oplus b)(c\oplus d)=ac\oplus bd.\end{equation}In particular $a\oplus b=(a\oplus \1)(\1\oplus b)$.
\end{lemma}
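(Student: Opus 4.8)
The plan is to unfold both sides with the definition $\Delta_1\oplus\Delta_2=\pi\circ(\Delta_1+\Delta_2)\circ\pi\iv$, keeping in mind that equivalent diagrams (those differing by dipole elimination) represent the same element of $F$, so it suffices to prove the equality up to $\sim$. Expanding the left-hand side,
\[
(a\oplus b)(c\oplus d)\ \sim\ \pi\circ(a+b)\circ\pi\iv\circ\pi\circ(c+d)\circ\pi\iv .
\]
The first step is to notice that the central subdiagram $\pi\iv\circ\pi$ is a dipole: it has the form $(\pi\iv)\circ(\pi\iv)\iv$, the bottom path of $\pi\iv$ (a single edge labeled $x$) coinciding with the top path of $\pi$. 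Eliminating it replaces these two cells by the trivial $(x^2,x^2)$-diagram $\varepsilon(x)+\varepsilon(x)$, which is a two-sided identity for $\circ$. Hence the right-hand side collapses to $\pi\circ(a+b)\circ(c+d)\circ\pi\iv$.

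The heart of the argument is the interchange identity $(a+b)\circ(c+d)=(a\circ c)+(b\circ d)$. Since $a,b,c,d$ are $(x,x)$-diagrams, both $a+b$ and $c+d$ are $(x^2,x^2)$-diagrams, so the composition is defined and glues the bottom path of $a+b$ to the top path of $c+d$. Each of these two paths is a length-two path labeled $x^2$ carrying a distinguished midpoint, namely the common vertex $\tau(a)=\iota(b)$ on one side and $\tau(c)=\iota(d)$ on the other; the label-preserving identification forced by $\circ$ necessarily matches the first edges and the second edges, hence the two midpoints as well. Thus the left portion of the resulting diagram is exactly the bottom of $a$ glued to the top of $c$, i.e.\ $a\circ c$, and the right portion is $b\circ d$, and placing them side by side is precisely $(a\circ c)+(b\circ d)$. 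This purely planar bookkeeping is the only subtle point, and it is the step I expect to be the main (if mild) obstacle.

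Finally, $a\circ c\sim ac$ and $b\circ d\sim bd$ by definition of the product in $F$, and since $+$ and composition with $\pi,\pi\iv$ respect $\sim$, we conclude
\[
\pi\circ(a+b)\circ(c+d)\circ\pi\iv\ \sim\ \pi\circ(ac+bd)\circ\pi\iv\ =\ ac\oplus bd,
\]
which is \eqref{e1}. The ``in particular'' statement then follows by specializing \eqref{e1} to $b=\1$ in the first factor and $c=\1$ in the second: $(a\oplus\1)(\1\oplus b)=(a\cdot\1)\oplus(\1\cdot b)=a\oplus b$.
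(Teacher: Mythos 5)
Your proof is correct. The paper offers no proof at all — it introduces the identity with ``The following property of $\oplus$ is obvious'' — and your argument (cancelling the middle dipole $\pi^{-1}\circ\pi$ to a trivial $(x^2,x^2)$-diagram, then applying the interchange law $(a+b)\circ(c+d)=(a\circ c)+(b\circ d)$, all up to dipole equivalence) is exactly the routine verification the authors had in mind.
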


\begin{remark} \label{rk:1} Note that the sum $\oplus$ is not associative but ``almost'' associative, that is, there exists an element
$g\in F$ such that for every $a, b, c\in F$, we have $$((a\oplus b)\oplus c)^g=a\oplus (b\oplus c).$$ Figure \ref{f:xx} below shows that in fact $g=x_0$.
\end{remark}

Thus $F$ can be considered as an algebra with two binary operations: multiplication and addition. It is a group under multiplication and satisfies the identity (\ref{e1}). Algebras with two binary operations satisfying these conditions form a variety, which we shall call the variety of \emph{Thompson algebras}.
Note that every group can be turned into a Thompson algebra in a trivial way by setting $a\oplus b=1$ for every $a,b$. Our main result shows, in particular, that $F_3$ has a non-trivial structure as a Thompson algebra.

The Thompson group $F$ has an obvious involutary automorphism that flips a diagram about a vertical line (it is also an anti-automorphism with respect to addition). Thus every statement about $F$ has its left-right dual.

Note that for every $n\ge 2$, the group $F_n$ is a diagram group over the semigroup presentation $\la x \mid x=x^n\ra$ \cite{GS}. This was used in \cite{GS} to find a nice presentation of $F_n$ for every $n$. We will use this presentation below.

\subsection{A normal form of elements of $F$}

Let $x_0, x_1$ be the standard generators of $F$. Recall that $x_{i+1}, i\ge 1$,  denotes
$x_0^{-i}x_1x_0^i$.
In these generators, the group $F$ has the following presentation
$\la x_i, i\ge 0\mid x_i^{x_j}=x_{i+1} \hbox{ for every}$ $j<i\ra$ \cite{CFP}.

There exists a clear connection between representation of elements of $F$ by
diagrams and the normal form of elements in $F$. Recall~\cite{CFP} that every
element in $F$ is uniquely representable in the following form:
\begin{equation}\label{NormForm}
x_{i_1}^{s_1}\ldots x_{i_m}^{s_m}x_{j_n}^{-t_n}\ldots x_{j_1}^{-t_1},
\end{equation}
where $i_1\le\cdots\le i_m\ne j_n\ge\cdots\ge j_1$;
$s_1,\ldots,s_m,t_1,\ldots t_n\ge0$, and if $x_i$ and $x_i\iv$ occur in
(\ref{NormForm}) for some $i\ge0$ then either $x_{i+1}$ or $x_{i+1}\iv$
also occurs in~(\ref{NormForm}).
This form is called the {\em normal form} of elements in $F$.

We say that a path in a diagram is \emph{positive} if all the edges in the path are oriented from left to right. Let $\P$ be the collection of cells which consists of one cell $x\to x^2$.
It was noticed in \cite{GS1} that every reduced diagram $\Delta$ over $\P$ can be divided by its longest positive
path from its initial vertex to its terminal vertex into two parts,
{\em positive} and {\em negative}, denoted by $\Delta^+$ and $\Delta^-$,
respectively. So $\Delta=\Delta^+\circ\Delta^-$. It is easy to prove by
induction on the number of cells that  $\Delta^+$ are
$(x,x^2)$-cells and all cells in $\Delta^-$ are $(x^2,x)$-cells.

Let us show how given an $(x,x)$-diagram over $\P$ one can
get the normal form of the element represented by this diagram. This is the left-right dual of the procedure described in \cite[Example 2]{GS1} and after Theorem 5.6.41 in \cite{Sa}.

\begin{lemma}\label{lm1}
Let us number the cells of $\Delta^+$ by
numbers from $1$ to $k$ by taking every time the ``leftmost'' cell,
that is, the cell which is to the left of any other cell attached to the
bottom path of the diagram formed by the previous cells. The first cell is
attached to the top path of $\Delta^+$ (which is the top path of $\Delta$). The $i$th cell in
this sequence of cells corresponds to an edge of the Squier graph $\Gamma(\P)$,
which has the form $(x^{\ell_i},x\to x^2,x^{r_i})$, where $\ell_i$ ($r_i$)
is the length of the path from the initial (resp. terminal) vertex of the
diagram (resp. the cell) to the initial (resp. terminal) vertex of the cell
(resp. the diagram), such that the path is contained in the bottom path of the diagram formed by the first $i-1$ cells.
If $r_i=0$ then we label this cell by 1. If
$r_i\ne0$ then we label this cell by the element $x_{\ell_i}$ of $F$.
Multiplying the labels of all cells, we get the ``positive'' part of the
normal form. In order to find the ``negative'' part of the normal form, consider
$(\Delta^-)\iv$, number its cells as above and label them as above. The normal form of $\Delta$ is then the product of the normal form of $\Delta^+$ and the inverse of the normal form of $(\Delta^-)\iv$.
\end{lemma}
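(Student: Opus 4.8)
The plan is to reduce the whole statement to a claim about positive diagrams and then prove that claim by induction on the number of cells. Recall from the discussion preceding the lemma that $\Delta=\Delta^+\circ\Delta^-$, that all cells of $\Delta^+$ are $(x,x^2)$-cells while all cells of $\Delta^-$ are $(x^2,x)$-cells, so that both $\Delta^+$ and $(\Delta^-)\iv$ are positive $(x,x^{k+1})$-diagrams, i.e. binary trees read from the single top vertex down to the leaves. Writing $R$ for the tree $\Delta^+$ and $S$ for the tree $(\Delta^-)\iv$, we have $\Delta=\Delta^+\circ\Delta^-$ with top tree $R$ and bottom tree $S$, the standard tree-pair form of an element of $F$. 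The negative part of $\Delta$ is governed by $S$ exactly as the positive part is governed by $R$, so it suffices to prove the following positive claim: applying the leftmost-cell enumeration and the labeling rule to a positive diagram $\Pi$ with top tree $T$ produces a positive word $w(\Pi)$ representing the positive element of $F$ whose top tree is $T$ and whose bottom tree is the right vine (the all-right tree) with the same number of leaves. Granting this, the normal form of $\Delta$ is $w(\Delta^+)\cdot w((\Delta^-)\iv)\iv$, which is the content of the lemma, provided this word is already reduced (handled below).

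The positive claim I would prove by induction on the number $k$ of cells of $\Pi$. For $k=0$ the diagram is trivial, the word is empty, and the element is $1$. For the inductive step, let the $k$-th (hence last, and leftmost available) cell subdivide an edge of the current bottom path having $\ell_k$ edges to its left and $r_k$ edges to its right. Since the labels are multiplied in the order $1,\dots,k$, it is enough to show that inserting this caret multiplies the currently represented positive element on the right by $x_{\ell_k}$ when $r_k\neq 0$, and leaves it unchanged when $r_k=0$. This is the heart of the matter: one translates the combinatorial caret insertion at position $\ell_k$ into a product of tree pairs and checks, using the presentation $\la x_i\mid x_i^{x_j}=x_{i+1},\ j<i\ra$, that it equals right multiplication by the generator $x_{\ell_k}$. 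The case $r_k=0$ is exactly a caret added on the right spine of $T$; such a caret refines only the rightmost interval and does not change the element, so its label is $1$. As consistency checks, the right vine yields all labels $1$ and the word $1$ (it represents the identity positive element), while the left vine with $k$ carets yields the word $x_0^{\,k-1}$, as it must.

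Finally I would verify that the word produced is genuinely the normal form (\ref{NormForm}). Because the enumeration always consumes the leftmost available caret before moving right, the positions $\ell_i$ are non-decreasing in $i$ (once a caret is added at position $p$, every edge strictly to its left is already a leaf, so all later carets sit at positions $\ge p$). Hence the non-trivially labeled cells produce $w(\Delta^+)=x_{i_1}^{s_1}\cdots x_{i_m}^{s_m}$ with $i_1\le\cdots\le i_m$, and dually $w((\Delta^-)\iv)\iv=x_{j_n}^{-t_n}\cdots x_{j_1}^{-t_1}$ with $j_n\ge\cdots\ge j_1$. The remaining requirement of (\ref{NormForm}) -- that whenever $x_i$ and $x_i\iv$ both occur, $x_{i+1}$ or $x_{i+1}\iv$ also occurs -- corresponds exactly to $\Delta$ containing no dipole: a caret shared by the top tree $R$ and the bottom tree $S$ at the same position is precisely a dipole $\pi\circ\pi\iv$, and its absence forbids the forbidden cancelling pattern. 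I expect the main obstacle to be the inductive step of the positive claim, i.e. establishing the single-caret multiplication identity with correct bookkeeping of the index $\ell_k$ and the right-spine absorption giving the rule $r_k=0\Rightarrow 1$; once that is in place, the ordering of indices and the reducedness-versus-compatibility correspondence are routine, though they must be phrased carefully in the diagram language of Section \ref{sec:fdg}.
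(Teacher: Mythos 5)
The paper offers no proof of Lemma \ref{lm1} to compare against: the statement is justified there purely by citation, as the left--right dual of the procedure in \cite[Example 2]{GS1} and of the discussion after Theorem 5.6.41 in \cite{Sa}. Your self-contained induction is therefore a genuinely different route, and its skeleton is correct. The reduction is sound: writing $V_m$ for the right vine with $m$ leaves, $(\Delta^+\circ V_m^{-1})\circ(V_m\circ\Delta^-)$ reduces to $\Delta$, so everything follows from your positive claim applied to $\Delta^+$ and $(\Delta^-)^{-1}$. Your key inductive step is true, and it is most cleanly established not via the presentation but by a direct diagram computation: with $E=\varepsilon(x^{\ell_k})+\pi+\varepsilon(x^{r_k})$, the diagram $\Pi_k\circ V_{\ell_k+r_k+2}^{-1}$ is equivalent to $\bigl(\Pi_{k-1}\circ V_{\ell_k+r_k+1}^{-1}\bigr)\circ\bigl(V_{\ell_k+r_k+1}\circ E\circ V_{\ell_k+r_k+2}^{-1}\bigr)$, and the right-hand factor is the tree pair consisting of a right vine with one caret attached at leaf $\ell_k$ against a right vine, which reduces to $x_{\ell_k}$ when $r_k\ge 1$ and to $\1$ when $r_k=0$; you should also say explicitly that deleting the last enumerated cell does not change the enumeration or the values $\ell_i,r_i$ of the earlier cells, which is what legitimizes the induction. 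Your monotonicity argument for the $\ell_i$ is correct as stated. For the reducedness step, note that only one implication is needed and it does hold: if $x_i$ and $x_i^{-1}$ occur but $x_{i+1}^{\pm 1}$ does not, then in the top tree the parent of leaf $i$ has leaf $i$ as its left child, lies off the right spine (otherwise it would be the only caret with leftmost leaf $i$ and $x_i$ could not occur), and its right child must be a leaf (otherwise that child is an off-spine caret with leftmost leaf $i+1$, forcing $x_{i+1}$ to occur); hence leaves $i,i+1$ are siblings in the top tree, symmetrically in the bottom tree, giving a dipole. Your phrase ``corresponds exactly'' overstates this -- the converse fails (the pair of single carets is a dipole, yet the procedure outputs the empty word, a valid normal form) -- but the direction you actually use is the true one, so this costs nothing. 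In sum: the paper's citation is shorter and situates the lemma in the diagram-group literature, while your argument makes the lemma self-contained and exposes precisely why the leftmost-first ordering yields the monotone indices required by (\ref{NormForm}).
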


For example, applying the procedure from Lemma \ref{lm1} to the diagram on Figure \ref{f3}
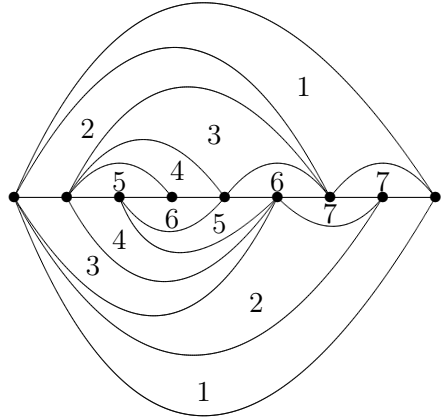
\begin{figure}[h!]
\begin{center} % nf.tex
\unitlength=0.7mm
\special{em:linewidth 0.4pt}
\linethickness{0.4pt}
\begin{picture}(83.00,90.00)
\put(2.00,43.00){\circle*{2.00}}
\put(12.00,43.00){\circle*{2.00}}
\put(22.00,43.00){\circle*{2.00}}
\put(32.00,43.00){\circle*{2.00}}
\put(42.00,43.00){\circle*{2.00}}
\put(52.00,43.00){\circle*{2.00}}
\put(62.00,43.00){\circle*{2.00}}
\put(72.00,43.00){\circle*{2.00}}
\put(82.00,43.00){\circle*{2.00}}
\put(2.00,43.00){\line(1,0){10.00}}
\put(12.00,43.00){\line(1,0){10.00}}
\put(22.00,43.00){\line(1,0){10.00}}
\put(32.00,43.00){\line(1,0){10.00}}
\put(42.00,43.00){\line(1,0){10.00}}
\put(52.00,43.00){\line(1,0){10.00}}
\put(62.00,43.00){\line(1,0){10.00}}
\put(72.00,43.00){\line(1,0){10.00}}
\bezier{132}(62.00,43.00)(72.00,56.00)(82.00,43.00)
\bezier{132}(62.00,43.00)(52.00,56.00)(42.00,43.00)
\bezier{132}(12.00,43.00)(22.00,56.00)(32.00,43.00)
\bezier{212}(12.00,43.00)(26.00,65.00)(42.00,43.00)
\bezier{392}(12.00,43.00)(33.00,85.00)(62.00,43.00)
\bezier{516}(2.00,43.00)(33.00,100.00)(62.00,43.00)
\bezier{676}(2.00,43.00)(34.00,117.00)(82.00,43.00)
\bezier{132}(22.00,43.00)(31.00,30.00)(42.00,43.00)
\bezier{212}(22.00,43.00)(30.00,22.00)(52.00,43.00)
\bezier{120}(52.00,43.00)(63.00,32.00)(72.00,43.00)
\bezier{304}(12.00,43.00)(29.00,11.00)(52.00,43.00)
\bezier{412}(2.00,43.00)(30.00,-2.00)(52.00,43.00)
\bezier{556}(2.00,43.00)(35.00,-17.00)(72.00,43.00)
\bezier{740}(2.00,43.00)(34.00,-40.00)(82.00,43.00)
\put(57.00,64.00){\makebox(0,0)[cc]{1}}
\put(72.00,46.00){\makebox(0,0)[cc]{7}}
\put(16.00,56.00){\makebox(0,0)[cc]{2}}
\put(40.00,55.00){\makebox(0,0)[cc]{3}}
\put(52.00,46.00){\makebox(0,0)[cc]{6}}
\put(33.00,48.00){\makebox(0,0)[cc]{4}}
\put(22.00,46.00){\makebox(0,0)[cc]{5}}
\put(38.00,6.00){\makebox(0,0)[cc]{1}}
\put(48.00,23.00){\makebox(0,0)[cc]{2}}
\put(62.00,40.00){\makebox(0,0)[cc]{7}}
\put(17.00,30.00){\makebox(0,0)[cc]{3}}
\put(22.00,35.00){\makebox(0,0)[cc]{4}}
\put(41.00,38.00){\makebox(0,0)[cc]{5}}
\put(32.00,39.00){\makebox(0,0)[cc]{6}}
\end{picture}
\end{center}
\caption{Reading the normal form of an element of $F$ off its diagram.}
\label{f3}
\end{figure}
\noindent we get the normal form
$x_0x_1^3x_4(x_0^2x_1x_2^2x_5)\iv$.

Diagrams for the generators of $F$, $x_0, x_1$, are on Figure \ref{f:xx}. %Note that the diagram $g$ from Remark \ref{rk:1} is in fact equal to $x_0$.

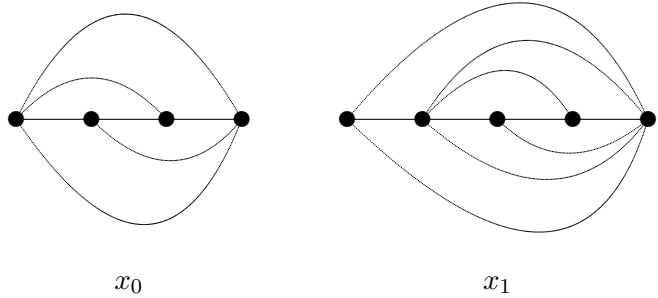
\begin{figure}[h!]
\begin{center}
\unitlength 1mm % = 2.845pt
\linethickness{0.4pt}
\ifx\plotpoint\undefined\newsavebox{\plotpoint}\fi % GNUPLOT compatibility
\begin{picture}(94.5,37)(0,0)
\put(39.5,24){\circle*{2}}
\put(29.5,24){\circle*{2}}
\put(19.5,24){\circle*{2}}
\put(9.5,24){\circle*{2}}
\put(93.5,24){\circle*{2}}
\put(83.5,24){\circle*{2}}
\put(73.5,24){\circle*{2}}
\put(63.5,24){\circle*{2}}
\put(53.5,24){\circle*{2}}
\put(39.5,24){\line(-1,0){10}}
\put(29.5,24){\line(-1,0){10}}
\put(19.5,24){\line(-1,0){10}}
\put(93.5,24){\line(-1,0){10}}
\put(83.5,24){\line(-1,0){10}}
\put(73.5,24){\line(-1,0){10}}
\put(63.5,24){\line(-1,0){10}}
\bezier{120}(29.5,24)(19.5,35)(9.5,24)
\bezier{120}(39.5,24)(30.5,13)(19.5,24)
\bezier{256}(39.5,24)(23.5,52)(9.5,24)
\bezier{256}(39.5,24)(28.5,-4)(9.5,24)
\bezier{132}(83.5,24)(75.5,37)(63.5,24)
\bezier{108}(93.5,24)(82.5,15)(73.5,24)
\bezier{208}(93.5,24)(76.5,45)(63.5,24)
\bezier{176}(93.5,24)(81.5,8)(63.5,24)
\bezier{296}(93.5,24)(79.5,55)(53.5,24)
\bezier{296}(93.5,24)(84.5,-6)(53.5,24)
\put(24.5,2){\makebox(0,0)[]{$x_0$}}
\put(73.5,2){\makebox(0,0)[]{$x_1$}}
\end{picture}
\end{center}
\caption{Diagrams representing the generators of the R. Thompson group $F$}%\label{fig:gen}
\label{f:xx}
\end{figure}

Lemma \ref{lm1} immediately implies

\begin{lemma}\label{lm1.5} If $u$ is the normal form of $\Delta$, then the normal form of $\1\oplus\Delta$ is obtained from $u$ by increasing every index by 1.
\end{lemma}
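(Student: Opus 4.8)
The plan is to read the normal form of $\1\oplus\Delta$ directly off its diagram using Lemma \ref{lm1}, and to compare the resulting sequence of cell labels with the one produced by $\Delta$. By definition $\1\oplus\Delta=\pi\circ(\1+\Delta)\circ\pi\iv$, so geometrically $\1\oplus\Delta$ is obtained from $\Delta$ by grafting a single extra edge (the bottom-left edge of the top cell $\pi$) to the left of $\Delta$ and capping the result with $\pi$ on top and $\pi\iv$ on the bottom. Writing $\Delta=\Delta^+\circ\Delta^-$ and using $\1+(\Delta^+\circ\Delta^-)=(\1+\Delta^+)\circ(\1+\Delta^-)$, I would first record that
$$(\1\oplus\Delta)^+=\pi\circ(\1+\Delta^+),\qquad (\1\oplus\Delta)^-=(\1+\Delta^-)\circ\pi\iv,$$
the first factor consisting entirely of positive $(x,x^2)$-cells and the second entirely of negative $(x^2,x)$-cells; since $\Delta$ is reduced and $\Delta\neq\1$, the diagram $\1\oplus\Delta$ is reduced (as noted after the definition of $\oplus$), so this is genuinely its positive/negative decomposition. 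The case $\Delta=\1$ is trivial, as $\1\oplus\1=\1$ has empty normal form.

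Next I would run the labelling procedure of Lemma \ref{lm1} on $(\1\oplus\Delta)^+$. The first cell is $\pi$ itself; its top is the whole top edge of the diagram, so $\ell_1=r_1=0$ and it is labelled $1$, contributing nothing. Every subsequent positive cell lies in the copy of $\Delta^+$ sitting on the right, and at each stage the current bottom path is the single left edge followed by the current bottom path of the processed part of $\Delta^+$. Hence the ``leftmost cell'' rule visits the cells of $\Delta^+$ in exactly the same order as within $\Delta$, the right-distance $r_i$ of each such cell is unchanged (nothing lies to the right of the $\Delta$-portion), while its left-distance is increased by exactly $1$ (the extra left edge). By Lemma \ref{lm1} a cell with $r_i=0$ is labelled $1$ in both diagrams, while a cell with $r_i\neq0$ is labelled $x_{\ell_i}$ in $\Delta$ and $x_{\ell_i+1}$ in $\1\oplus\Delta$. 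Therefore the positive part of the normal form of $\1\oplus\Delta$ is that of $\Delta$ with every index increased by $1$.

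For the negative part I would apply the same computation to $\big((\1\oplus\Delta)^-\big)\iv=\pi\circ\big(\1+(\Delta^-)\iv\big)$, which has exactly the shape treated above, with $\Delta^+$ replaced by the positive diagram $(\Delta^-)\iv$ used to read the negative part of the normal form of $\Delta$; the same bookkeeping shifts every index by $1$. Combining the two parts gives the claim. The only genuinely delicate point is the bookkeeping of $(\ell_i,r_i)$: one must check that grafting the trivial left edge adds $1$ to every left-distance and leaves every right-distance untouched, and that the capping cells $\pi,\pi\iv$ are each labelled $1$; everything else is formal.

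As an alternative, one can avoid the diagram bookkeeping altogether: setting $a=c=\1$ in Lemma \ref{lm0} shows that $\phi\colon g\mapsto\1\oplus g$ is an endomorphism of $F$, and checking on the generators that $\phi(x_0)=x_1$ and $\phi(x_1)=x_2$ gives $\phi(x_k)=x_{k+1}$ for all $k$. Since raising every index by $1$ visibly preserves the defining conditions of the normal form (\ref{NormForm}), the word obtained from the normal form of $\Delta$ by this shift is already in normal form, hence is the normal form of $\phi(\Delta)=\1\oplus\Delta$.
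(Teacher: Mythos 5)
Your main argument is correct and is essentially the paper's own proof: the paper states the lemma as an immediate consequence of Lemma \ref{lm1}, and your $(\ell_i,r_i)$ bookkeeping — the grafted left edge raises every left-distance by $1$, leaves every right-distance unchanged, and the capping cells $\pi,\pi\iv$ receive the empty label — is exactly the verification the paper leaves implicit, including the correct decomposition $(\1\oplus\Delta)^+=\pi\circ(\1+\Delta^+)$, $(\1\oplus\Delta)^-=(\1+\Delta^-)\circ\pi\iv$ and the reducedness remark.

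Your alternative argument is also valid and takes a genuinely different route: it replaces diagram combinatorics by the observation (from Lemma \ref{lm0} with $a=c=\1$) that $\phi\colon g\mapsto\1\oplus g$ is an endomorphism, the relations $x_{i+1}=x_0^{-i}x_1x_0^{i}$, and the uniqueness of normal forms, together with the easy remark that shifting all indices by $1$ preserves the normal-form conditions. This buys a proof that never touches the labelling algorithm, at the cost of two base computations. One caution there: you must verify $\1\oplus x_0=x_1$ and $\1\oplus x_1=x_2$ directly (e.g.\ by comparing the explicit diagrams of Figure \ref{f:xx}, or the homeomorphisms), not by citing the paper's remark that $x_{i+1}=\1\oplus x_i$, since in the paper that remark is deduced \emph{from} this lemma and quoting it would be circular.
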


In particular, $x_1=\1\oplus x_0$, and, in general, $x_{i+1}=\1\oplus x_i$, $i\ge 0$.
Thus we get

\begin{proposition}\label{p1} As a Thompson algebra, $F$ is generated by one element $x_0$.
\end{proposition}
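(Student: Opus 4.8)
The plan is to identify the Thompson subalgebra generated by $x_0$ with all of $F$, where by a \emph{subalgebra} I mean a subset containing $x_0$ and closed under the three relevant operations: multiplication, inversion, and the addition $\oplus$. Call this subalgebra $H$. The entire argument is driven by Lemma \ref{lm1.5}, which allows us to manufacture the higher standard generators from $x_0$ by repeatedly adjoining $\1$ on the left via $\oplus$; the work of the proposition has, in effect, already been done there.

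First I would note that $H$ automatically contains the identity. Indeed, $x_0\in H$ and $H$ is closed under multiplication and inversion, so $\1=x_0x_0\iv\in H$. This small observation is the only point that requires any care, since the definition of the subalgebra does not list $\1$ among the distinguished constants.

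Next I would generate all standard generators by an easy induction. For the base case, $x_1=\1\oplus x_0$ by Lemma \ref{lm1.5}, and since $\1,x_0\in H$ and $H$ is closed under $\oplus$, we get $x_1\in H$. For the inductive step, Lemma \ref{lm1.5} gives $x_{i+1}=\1\oplus x_i$ for every $i\ge 0$, so $x_i\in H$ forces $x_{i+1}\in H$. Hence $x_i\in H$ for all $i\ge 0$.

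Finally, recall that $F$ is generated as a group by $\{x_i:i\ge 0\}$ (indeed by $x_0,x_1$ alone, via $x_{i+1}=x_0^{-i}x_1x_0^i$). Since $H$ contains all $x_i$ and is closed under the group operations, it contains $\la x_i, i\ge 0\ra=F$, and therefore $H=F$, as claimed. There is no genuine obstacle here: once Lemma \ref{lm1.5} is in hand the statement is immediate, and the only subtlety worth flagging explicitly is the closure of a Thompson subalgebra under passing to $\1$.
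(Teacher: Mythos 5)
Your proof is correct and follows essentially the same route as the paper, which deduces the proposition directly from Lemma \ref{lm1.5} via the identities $x_{i+1}=\1\oplus x_i$ and the fact that the $x_i$ generate $F$ as a group. Your only addition is making explicit that $\1=x_0x_0\iv$ lies in the subalgebra, a point the paper leaves implicit.
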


\subsection{From diagrams to homeomorphisms}\label{frac}

There is a natural isomorphism between Thompson group $F$ defined as a diagram group and $F$ defined as a group of homeomorphisms. Let $\Delta$ be a diagram in $F$. The positive subdiagram $\D^+$ describes a binary subdivision of $[0,1]$ in the following way. Every edge of $\Delta^+$ corresponds to a dyadic sub-interval of $[0,1]$; that is, an interval of the form $[\frac{k}{2^n},\frac{k+1}{2^n}]$ for integers $n\ge 0$ and $k=0,\dots,2^n-1$. The top edge $\Top$ corresponds to the interval $[0,1]$. For each cell $\pi$ of $\D^+$ (hence an $(x,x^2)$-cell), if $\mathbf{top}(\pi)$ corresponds to an interval $[\frac{k}{2^n},\frac{k+1}{2^n}]$ then the left bottom edge of $\pi$ corresponds to the left half of the interval, $[\frac{k}{2^n},\frac{k}{2^n}+\frac{1}{2^{n+1}}]$, and the right bottom edge of $\pi$ corresponds to the right half of the interval,  $[\frac{k}{2^n}+\frac{1}{2^{n+1}},\frac{k+1}{2^n}]$.

Thus, if the bottom path of $\D^+$ consists of $n$ edges, $\D^+$ describes a binary subdivision composed of $n$ intervals. Similarly, the negative subdiagram $\D^-$ describes a binary subdivision with $n$ intervals as well. The diagram $\D$ corresponds to the dyadic rearrangement mapping the \emph{top subdivision} (associated with $\D^+$) to the \emph{bottom subdivision} (associated with $\D^{-}$).

Thinking of $\D^+$ as a subdivision of $[0,1]$, the inner vertices of $\D$ (that is, the vertices other than $\ini$ and $\ter$) correspond to the break points of the subdivision. Thus, in the diagram $\D$ every inner vertex is associated with two break points; one break point of the top subdivision and one break point of the bottom subdivision. %two binary fractions. The \emph{top fraction} is the corresponding break point of the top subdivision and the \emph{bottom fraction} is the corresponding break point of the bottom subdivision. Viewed as a map from $[0,1]$ to itself, $\D$ takes each top fraction to the corresponding bottom fraction.

\subsection{From diagrams to pairs of binary trees}\label{trees}

Thompson group $F$ can be defined in terms of reduced pairs of binary trees.
Let $\Delta$ be a diagram in $F$ with $n+1$ vertices. %We will refer to them as $\iota(\D)+i$ for $i=0,\dots,n$.
It is possible to put a vertex in the middle of every edge of the diagram. Then, for each cell $\pi$ in $\D^+$ (hence an $(x,x^2)$-cell) one can draw edges from the vertex on $\mathbf{top}(\pi)$ to the vertices on the bottom edges of $\pi$. We get a binary tree $T_+$ with the root lying on $\Top$ and $n$ leaves lying on $\mathbf{bot}(\D^+)=\mathbf{top}(\D^-)$. A similar construction in $\D^-$ gives a second binary tree $T_-$, lying upside down, such that the leaves of $T_+$ and $T_-$ coincide. Whenever we speak of a pair of binary trees $(T_+,T_-)$ we assume that they have the same number of leaves, and that $T_-$ is drawn upside down so that the leaves of $T_+$ and $T_-$ coincide.

Let $T$ be a binary tree. We call a vertex with two children a \emph{caret} in the tree. We say that a pair of binary trees $(T_+,T_-)$ \emph{have a common caret} if for some $i$, the $i$ and $i+1$ leaves have a common father both in $T_+$ and in $T_-$ .
We call a pair of trees $(T_+,T_-)$ \emph{reduced} if it has no common carets. The construction above maps every reduced diagram $\Delta$ to a reduced pair of binary trees. The correspondence is one to one and enables to view Thompson group $F$ as a group of reduced pairs of binary trees with the proper multiplication.

\section{The Thompson graphs}\label{sec:graph}

\subsection{The definition in terms of pairs of trees}

Jones \cite{Jo} defined for each element of $F$, viewed as a reduced pair of binary trees, an associated graph, which he called the {\em Thompson graph} of that element.
If $(T_+,T_-)$ is a reduced pair of binary trees, we call an edge $t$ in $T_+$ or $T_-$ a \emph{left edge} if it connects a vertex to its left child. %We call it a \emph{right edge} if it connects a vertex to its right child.

\begin{definition}[Jones \cite{Jo}]\label{def:j}
Let $g$ be an element of $F$ and $(T_+,T_-)$ the corresponding reduced pair of binary trees. If $T_+,T_-$ have $n$ common leaves, enumerated $l_1,\dots, l_n$ from left to right, we can assume that all of them lie on the same horizontal line. We define a graph $J(g)$ as follows. The graph $J(g)$ has $n$ vertices $v_0,\dots, v_{n-1}$. The vertex $v_0$ lies to the left of the first leaf $l_1$ and for all $i=1,\dots,n-1$, the vertex $v_i$ lies between $l_i$ and $l_{i+1}$ on the horizontal line.
The edges of $J(g)$ are defined as follows. For every left edge $t$ of $(T_+,T_-)$, we draw a single edge $e$ in $J(g)$ which crosses the edge $t$ and no other edge of $(T_+,T_-)$. Note that this property determines the end vertices of the edge $e$.
\end{definition}

For example, the graph $J(x_1)$, associated with $x_1$,  is depicted in Figure \ref{fig:Jx1}.

\begin{figure}[h!]
\centering
\includegraphics[width=0.5\columnwidth]{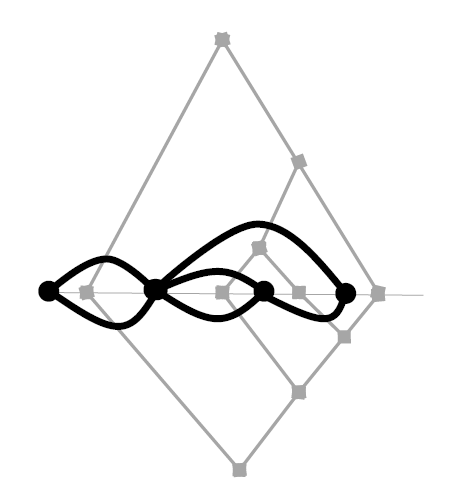}
\caption{The graph $J(x_1)$.}
    \label{fig:Jx1}
\end{figure}

\subsection{The definition in terms of diagrams}

\begin{definition}\label{def:d}
Let $\Delta$ be a (not necessarily reduced) diagram over the presentation $\P=\la x\mid x=x^2\ra$.
The \emph{Thompson graph} $T(\D)$ is a ``subgraph'' of the diagram $\D$, defined as follows. %We define a ``subgraph'' $T(\D)$ of the diagram $\Delta$ as follows.
The vertex set of $T(\D)$ is the vertex set of $\D$ minus the terminal vertex $\tau(\D)$.
For every inner vertex $v$ of $\D$ the only incoming edges of $v$ which belong to $T(\D)$ are the top-most and bottom-most incoming edges of $v$ in $\D$.
If the top-most and bottom-most incoming edges of $v$ coincide we shall consider them as two distinct edges (hence the quotation marks on the word subgraph). An edge of $T(\D)$ will be called an \emph{upper (lower) edge} if it is the top-most (bottom-most) incoming edge of an inner vertex in $\D$. %Thus, every edge in $T(\D)$ is either an upper edge or a lower edge.
%Thus, every edge of $T(D)$ is either a \emph{top edge} or a \emph{bottom edge}.
\end{definition}

The subgraph $T(x_0x_1)$ of the diagram $x_0x_1$ of Thompson group $F$ is depicted in Figure \ref{fig:x0x1}. The upper edges in the graph are colored red, while the lower edges are colored blue. Note that the graph is bipartite.

\begin{figure}[h]
\centering
\includegraphics[width=0.5\columnwidth]{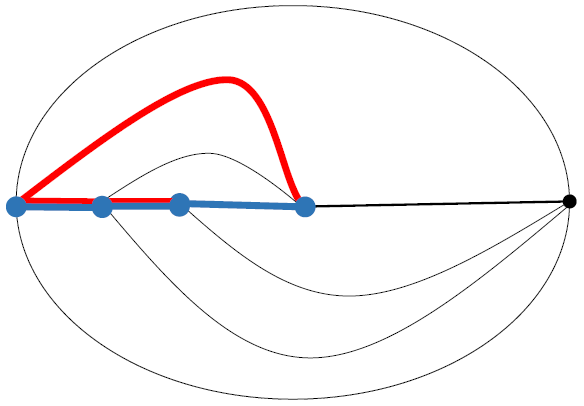}
%\caption{The graph  $T(x_0x_1)$ associated with the element $x_0x_1$ is drawn on the diagram representing $x_0x_1$. The top edges are colored in red while the bottom edges are colored in blue.}
\caption{The graph $T(x_0x_1)$.}
    \label{fig:x0x1}
\end{figure}

\begin{remark}\label{left}
Let $\Delta$ be a reduced diagram in $F$. Consider the positive subdiagram $\D^+$. Every cell $\pi$ in $\D^+$ is an $(x,x^2)$-cell. As such, it has a unique \emph{bottom vertex} separating the left bottom edge of $\pi$ from its right bottom edge (see Figure \ref{fd}).  %$\mathbf{bot}(\pi)$ .
Conversely, every inner vertex $v$ of $\D$ is the bottom vertex of a unique cell $\pi_v$ in $\D^+$. Clearly, the top-most incoming edge of $v$ in $\D$ is the left bottom edge of the cell $\pi_v$. Thus, the upper edges in the graph $T(\D)$ are exactly the left bottom edges of the cells in $\D^+$. Similarly, the lower edges of the graph $T(\D)$ are exactly the left top edges of the cells in $\D^-$.
%Let $\Delta$ be a diagram in $F$. Consider the positive subdiagram $\D^+$. Every cell $\pi$ in $\D^+$ is an $(x,x^2)$-cell. As such it has a unique \emph{bottom vertex} separating the left bottom edge of $\pi$ from the right bottom edge. %$\mathbf{bot}(\pi)$ .
%Conversely, every inner vertex $v$ of $\D^+$ is the bottom vertex of a unique cell $\pi$ in $\D^+$. Clearly, The top-most incoming edge of $v$ in $\D$ is the left bottom edge of the cell $\pi$. Thus, the top edges in the graph $T(\D)$ are exactly the left bottom edges of the cells in $\D^+$. Similarly, the bottom edges of the graph $T(\D)$ are exactly the left top edges of the cells in $\D^-$.
\end{remark}

\begin{remark} In Section \ref{ss:tf} we shall show how to reconstruct $g$ from $T(g)$.
\end{remark}

\subsection{The equivalence of two definitions}

\begin{proposition}
Let $\Delta$ be a reduced diagram in $F$. Then, the associated graphs $T(\Delta)$ (from Definition \ref{def:d}) and $J(\Delta)$ (from Definition \ref{def:j}) are isomorphic.
%That is, there is a graph isomorphism between $T(\Delta)$ and $J(\Delta)$.
\end{proposition}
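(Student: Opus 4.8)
The plan is to exhibit an explicit isomorphism by reading the two graphs off one and the same planar picture: the pair of trees $(T_+,T_-)$ built from $\D$ in Section \ref{trees}, superimposed on the diagram $\D$ itself. Recall that $T_+$ is obtained from $\D^+$ by placing a node at the midpoint of every edge and joining, inside each $(x,x^2)$-cell, the node on its top edge to the nodes on its two bottom edges, and $T_-$ is obtained dually from $\D^-$. If $\D$ has $n$ leaves, then its vertices, read left to right along the central path $\mathbf{bot}(\D^+)=\mathbf{top}(\D^-)$, are $w_0=\iota(\D),w_1,\dots,w_{n-1},w_n=\tau(\D)$, and the leaf $l_i$ is the edge joining $w_{i-1}$ to $w_i$.

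First I would set up the bijection on vertices. Placing each leaf node $l_i$ at the midpoint of the leaf edge $w_{i-1}w_i$, the Jones vertex $v_i$, which by Definition \ref{def:j} sits between $l_i$ and $l_{i+1}$ (and $v_0$ to the left of $l_1$), falls exactly on the diagram vertex $w_i$. Thus $v_i\mapsto w_i$ is a bijection from the vertex set $\{v_0,\dots,v_{n-1}\}$ of $J(\D)$ onto the vertex set $\{w_0,\dots,w_{n-1}\}$ of $T(\D)$; note that the only Jones vertex that is \emph{not} created, the one to the right of $l_n$, would sit on $\tau(\D)$, which is precisely the vertex deleted in Definition \ref{def:d}.

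Next I would set up the bijection on edges. By Remark \ref{left} the upper edges of $T(\D)$ are exactly the left bottom edges of the cells of $\D^+$, and the lower edges are exactly the left top edges of the cells of $\D^-$. On the other hand, the left edges of $T_+$ (resp. $T_-$) are in bijection with the carets of $T_+$ (resp. $T_-$), hence with the cells of $\D^+$ (resp. $\D^-$): the left edge of the caret corresponding to a cell $\pi$ runs from the node on $\mathbf{top}(\pi)$ to the node on the left bottom edge of $\pi$. So both edge sets are naturally indexed by the cells of $\D^+$ together with those of $\D^-$, and it remains only to check that corresponding edges receive corresponding endpoints.

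Finally, the incidence check, which is the heart of the matter. Fix a cell $\pi$ of $\D^+$ and let the subtree of $T_+$ hanging below its caret span the leaves $l_p,\dots,l_r$, with the left child spanning $l_p,\dots,l_q$ and the right child spanning $l_{q+1},\dots,l_r$. Then the bottom vertex of $\pi$ (the vertex separating its two bottom edges) is $w_q$ and the left endpoint of $\mathbf{top}(\pi)$ is $w_{p-1}$ (equivalently, in the subdivision picture of Section \ref{frac}, these are the left endpoint and the midpoint of the dyadic interval carried by $\mathbf{top}(\pi)$); hence the upper edge of $T(\D)$ attached to $\pi$ joins $w_{p-1}$ to $w_q$. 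For $J(\D)$ I must show that the Jones edge $e$ dual to the left edge $t$ from the caret to its left child joins $v_{p-1}$ to $v_q$. This is where the condition in Definition \ref{def:j} that ``$e$ crosses $t$ and no other edge of $(T_+,T_-)$'' has to be turned into a combinatorial statement: the two faces of the planar tree $T_+$ bordering $t$ meet the central line exactly at the gap $v_{p-1}$ (immediately to the left of the left subtree) and at the gap $v_q$ (directly below the caret, between its two subtrees), so the unique simple arc meeting the line only at its endpoints and crossing $t$ alone runs from $v_{p-1}$ to $v_q$. Under $v_i\mapsto w_i$ this is exactly the pair $\{w_{p-1},w_q\}$, and the argument for the cells of $\D^-$ is identical with the arcs drawn below the line. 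I expect this planar identification of the endpoints of $e$ to be the only genuinely delicate point; everything else is the bookkeeping of the two bijections above.
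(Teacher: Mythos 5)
Your proposal is correct and takes essentially the same route as the paper: both superimpose the pair of trees $(T_+,T_-)$ on the diagram $\D$ and use Remark \ref{left} to match the upper (lower) edges of $T(\D)$ with the left edges of $T_+$ (resp.\ $T_-$), one per cell of $\D^+$ (resp.\ $\D^-$). The only difference is one of presentation: where you pin down the endpoints of each Jones edge explicitly by the face analysis (obtaining $v_{p-1}$ and $v_q$), the paper reaches the same identification by slightly ``stretching'' each edge of $T(\D)$ so that it crosses the corresponding left edge $t$ rather than the tree node sitting on it, and then invokes the defining property of $J(\D)$ that the crossing determines the endpoints.
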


\begin{proof}
Let $(T_+,T_-)$ be the reduced pair of binary trees associated with $\Delta$. We can assume that the pair $(T_+,T_-)$ is drawn inside the diagram $\D$ as described in Section \ref{trees}.
Let $T(\D)$ be the subgraph associated with $\D$. It is possible to stretch each of the upper edges of $T(\D)$ up as follows.
By Remark \ref{left}, $e$ is an upper edge in $T(\D)$, if and only if $e$ is a left bottom edge of some cell in $\D^+$. Let $v$ be the vertex of $T_+$ which lies on $e$ and $t$ the edge of $T_+$ connecting $v$ to its father. Clearly, $t$ is a left edge in the tree $T_+$. We stretch the edge $e$ slightly so that instead of crossing the vertex $v$ it crosses the edge $t$ of $T_+$ (and no other edges of the tree).
Similarly, we stretch every bottom edge of $T(\D)$ down so that instead of crossing a vertex of the tree $T_-$, it crosses the edge of the tree connecting the vertex to its father. The process is illustrated in Figure \ref{fig:stretch}.

\begin{figure}[h]
\centering
\includegraphics[width=0.8\columnwidth]{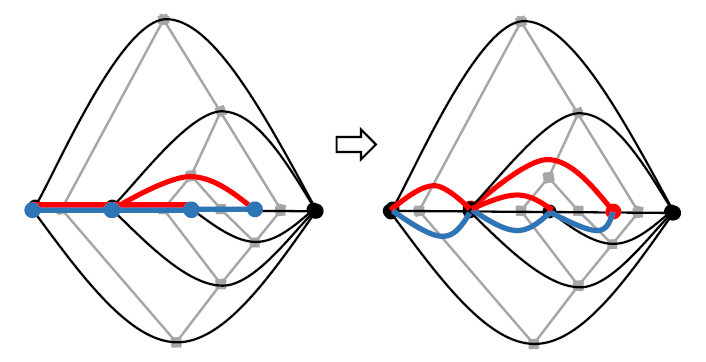}
\caption{Stretching the edges of $T(\D)$.}
    \label{fig:stretch}
\end{figure}

If the graph $T'(\D)$ results from stretching the edges of $T(\D)$ as described, then there is a one to one correspondence between the left edges of the pair of trees $(T_+,T_-)$ and the edges of $T'(\D)$. Indeed, every edge of $T'(\D)$ crosses a single left edge of $(T_+,T_-)$ and every left edge of $(T_+,T_-)$  is crossed by an edge of $T'(\D)$. Note, that if $(T_+,T_-)$ have $n$ common leaves, then $T(\D)$ (hence $T'(\D)$) has $n$ vertices; one to the left of the left most leaf of $T_+$ and one between any pair of consecutive leaves of $T_+$. It follows that the graph $T'(\D)$ is isomorphic to the graph $J(\D)$. Since $T(\D)$ and $T'(\D)$ are clearly isomorphic as graphs we get the result.
\end{proof}

\section{The Jones' subgroup $\protect\overrightarrow F$ and its properties}\label{sec:properties}

\subsection{The definition of $\protect\overrightarrow F$}

\begin{lemma}\label{l:1}
Suppose that a diagram $\Delta$ is obtained from a diagram $\Delta'$ by removing a dipole. Suppose that $T(\Delta')$ is bipartite.  Then $T(\Delta)$ is bipartite.
\end{lemma}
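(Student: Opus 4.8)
The plan is to show that $T(\Delta)$ is isomorphic to the graph obtained from $T(\Delta')$ by deleting a single vertex together with its incident edges; since every subgraph of a bipartite graph is bipartite, the lemma then follows immediately. So the whole argument reduces to a local analysis of the effect of one dipole elimination on the graph $T$.

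First I would fix notation for the dipole. Write it as $\pi\circ\pi\iv$, where $\pi$ is the $(x,x^2)$-cell and $\pi\iv$ the $(x^2,x)$-cell sitting directly below it. Let $p=\iota(\pi)$ and $r=\tau(\pi)$ be its left and right boundary vertices, and let $q$ be the common middle vertex (the bottom vertex of $\pi$, equivalently the top vertex of $\pi\iv$). The dipole has four edges: the top edge $e_t=\topp(\pi)$ and the bottom edge $e_b=\bott(\pi\iv)$, both running $p\to r$; and the two interior edges $f_1\colon p\to q$ and $f_2\colon q\to r$, which together form $\bott(\pi)=\topp(\pi\iv)$. Removing the dipole deletes the two cells and the vertex $q$ and merges $e_t,e_b$ into a single edge $\bar e\colon p\to r$. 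Since $q$ is interior to the dipole it is not the terminal vertex, and the terminal vertex is unchanged, so the vertex set of $T(\Delta)$ is exactly that of $T(\Delta')$ with $q$ removed.

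Next I would determine, using Definition \ref{def:d}, precisely which edges of $T(\Delta')$ touch $q$, and check that no adjacencies are created or destroyed elsewhere. The vertex $q$ has $f_1$ as its unique incoming edge, so $f_1$ is simultaneously its top-most and bottom-most incoming edge; hence the only edges of $T(\Delta')$ incident to $q$ are the upper and lower copies of $f_1$, i.e.\ a double edge joining $p$ and $q$. The only outgoing edge of $q$ is $f_2\colon q\to r$, and at the vertex $r$ its three dipole edges occur, from top to bottom, in the order $e_t,f_2,e_b$; thus $f_2$ is sandwiched between $e_t$ and $e_b$ and can never be the top-most or bottom-most incoming edge of $r$, so $f_2\notin T(\Delta')$. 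Consequently $q$ is adjacent in $T(\Delta')$ only to $p$.

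Finally I would verify that deleting $q$ yields exactly $T(\Delta)$. The incoming edges of $p$ lie entirely outside the dipole and are untouched, so $p$'s contribution to $T$ is unchanged. At $r$, the edge $f_2$ never belonged to $T$, and whichever of $e_t,e_b$ happened to be extreme (top-most or bottom-most) is replaced after the elimination by $\bar e$, which still joins $p$ to $r$; so the $T$-edges at $r$ connect the same pair of vertices, with the same multiplicity, before and after. Every remaining cell keeps its left-bottom (resp.\ left-top) edge, with $e_t,e_b$ possibly relabelled to $\bar e$ but always joining the same endpoints, so by Remark \ref{left} the rest of the graph is preserved. Hence $T(\Delta)$ is the induced subgraph of $T(\Delta')$ on its vertices other than $q$, and is therefore bipartite. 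I expect the only delicate point to be the bookkeeping at the vertex $r$: one must confirm that $f_2$ is genuinely interior among the incoming edges there, and that the merge $e_t,e_b\mapsto\bar e$ neither changes endpoints nor alters how many $T$-edges join $p$ and $r$, across the cases where $r$ does or does not also receive edges from outside the dipole.
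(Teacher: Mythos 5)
There is a genuine gap: you analyze only one of the two kinds of dipoles, and the one you omit is where the actual content of the lemma lies. Over the presentation $\la x\mid x=x^2\ra$ a dipole is a pair of mutually inverse cells, so besides your configuration $\pi\circ\pi\iv$ (positive cell on top of negative cell; shape one edge, then two, then one) there is also the configuration $\pi\iv\circ\pi$ (negative cell on top of positive cell; shape two edges, then one, then two), where $\pi$ denotes the cell of Figure \ref{fd}. In that second case the dipole has \emph{four} vertices: left $p$, right $r$, a vertex $q_t$ on the top path of $\pi\iv$ and a vertex $q_b$ on the bottom path of $\pi$; removing the dipole deletes the middle edge $p\to r$ and \emph{identifies} $q_t$ with $q_b$. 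So $T(\Delta)$ is a quotient of (part of) $T(\Delta')$, not a subgraph of it, and your reduction to ``every subgraph of a bipartite graph is bipartite'' collapses. Here the hypothesis must genuinely be used: since any cells at $q_t$ outside the dipole lie above it and any cells at $q_b$ outside the dipole lie below it, the edge $p\to q_t$ is the bottom-most incoming edge of $q_t$ and $p\to q_b$ is the top-most incoming edge of $q_b$, so both $q_t$ and $q_b$ are adjacent to $p$ in $T(\Delta')$ and hence lie in the same part of any bipartition; identifying two vertices in the same part (and erasing the edges $p$--$q_t$, $p$--$q_b$) preserves bipartiteness. This is precisely the second case of the paper's proof.

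This missing case cannot be waved away by symmetry (inversion of the diagram carries $\pi\iv\circ\pi$ dipoles to $\pi\iv\circ\pi$ dipoles, not to your type), and it is the case that actually matters for the intended application: in the proof of Proposition \ref{sub}, when two reduced diagrams of $\overrightarrow F$ are multiplied, the dipoles cancelled in $\Delta_1\circ\Delta_2$ arise between the negative part of $\Delta_1$ and the positive part of $\Delta_2$, i.e.\ they are all of type $\pi\iv\circ\pi$. Your treatment of the $\pi\circ\pi\iv$ case itself is correct, and in fact more careful than the paper's (which notes that there $q$ is joined to $p$ by a double edge and simply disappears, calling the conclusion obvious), but as written your argument does not prove the lemma.
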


\begin{proof}
The dipole can be of type $\pi\circ \pi\iv$ or of type $\pi\iv\circ \pi$ where $\pi$ is the cell on Figure \ref{fd}.
%, that is, its top cell can be an $(x,x^2)$-cell and its bottom cell an $(x^2,x)$-cell or vise versa. %positive and its bottom cell negative or vice versa.
 In the first case to get the graph $T(\Delta)$ we remove from $T(\D')$ a vertex with exactly two edges connecting it to another vertex of $T(\Delta')$, and the statement is obvious.
In the second case, since $T(\Delta')$ is bipartite, we can label the vertices of $\Delta'$ by "+" and "-", so that every two incident vertices have opposite signs. Consider the four vertices of the dipole. In $T(\Delta')$ the top and the bottom vertices of the dipole are incident to the left vertex. Hence the top and the bottom vertices have the same label. Note that the edge of the dipole connecting the left vertex with the right vertex is not an edge of $T(\Delta')$. Thus, the effect of removing the dipole on $T(\Delta')$ amounts to identifying the top and the bottom vertices of the dipole and erasing the lower edge connecting the left vertex with the top vertex and the upper edge connecting the left vertex with the bottom one. Since the top and bottom  vertices have the same label, the Thompson graph $T(\Delta)$ is bipartite.
\end{proof}

\begin{definition}\label{vecF}
\emph{Jones' subgroup} $\overrightarrow F$ is the set of all reduced diagrams $\D$ in $F$ for which the associated graph $T(\D)$ is bipartite.
\end{definition}

\begin{proposition}\label{sub}
Jones' subgroup $\overrightarrow F$ is indeed a subgroup of $F$.
\end{proposition}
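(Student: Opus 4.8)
The plan is to check the three subgroup axioms for the set $\overrightarrow{F}$ of reduced diagrams whose Thompson graph is bipartite, using Lemma \ref{l:1} to absorb the reduction step. The trivial diagram $\mathbf{1}$ lies in $\overrightarrow{F}$ because $T(\mathbf{1})$ consists of the single vertex $\iota$ with no edges. For closure under inversion I would use that $\Delta^{-1}$ is obtained by flipping $\Delta$ about a horizontal line: this flip fixes $\iota$ and $\tau$, sends inner vertices to inner vertices, and at each inner vertex merely interchanges its top-most and bottom-most incoming edges. Hence $T(\Delta^{-1})$ and $T(\Delta)$ have the same vertex set and the same edge set (only the labels upper/lower are exchanged), so they coincide as abstract graphs; in particular $T(\Delta^{-1})$ is bipartite whenever $T(\Delta)$ is, and $\Delta^{-1}$ is reduced iff $\Delta$ is.

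The heart of the matter is closure under products. Given $\Delta_1,\Delta_2\in\overrightarrow{F}$, their product in $F$ is the reduced diagram obtained from the concatenation $\Delta_1\circ\Delta_2$ by successively removing dipoles. Since each dipole removal preserves bipartiteness of the Thompson graph by Lemma \ref{l:1}, it suffices (by induction on the number of dipoles) to prove that the generally unreduced diagram $\Delta_1\circ\Delta_2$ already has $T(\Delta_1\circ\Delta_2)$ bipartite. The key structural observation is that every diagram in $F$ is an $(x,x)$-diagram, so both its top and bottom paths are single edges. Therefore forming $\Delta_1\circ\Delta_2$ identifies the single bottom edge of $\Delta_1$ with the single top edge of $\Delta_2$, and the two copies share exactly the vertices $\iota$ and $\tau$; the seam edge runs from $\iota$ to $\tau$, so it is incoming only to $\tau$ and contributes nothing to $T(\Delta_1\circ\Delta_2)$.

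From this I would read off $T(\Delta_1\circ\Delta_2)$ directly. The inner vertices of $\Delta_1\circ\Delta_2$ are the disjoint union of those of $\Delta_1$ and those of $\Delta_2$, and by Remark \ref{left} each of them lies on the middle path of its own factor, hence strictly in that factor's interior. For an inner vertex $v$ of $\Delta_1$ all incident edges lie inside $\Delta_1$, so its top-most and bottom-most incoming edges are precisely the ones recorded in $T(\Delta_1)$; symmetrically for $\Delta_2$. Consequently no edge joins an inner vertex of $\Delta_1$ to one of $\Delta_2$, the only vertex the two pieces share (after deleting $\tau$) is $\iota$, and $T(\Delta_1\circ\Delta_2)$ is exactly the one-point union of $T(\Delta_1)$ and $T(\Delta_2)$ glued along $\iota$. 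A one-point union of bipartite graphs is bipartite: fix proper $2$-colourings of $T(\Delta_1)$ and $T(\Delta_2)$ each giving $\iota$ the colour $+$ (swap the colours of one factor if needed) and glue them into a proper $2$-colouring of the union. Thus $T(\Delta_1\circ\Delta_2)$ is bipartite, and after reduction $\Delta_1\Delta_2\in\overrightarrow{F}$.

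The step I expect to require the most care is the structural claim that the concatenation meets only at $\iota$ and $\tau$ and that the incidences at inner vertices are inherited unchanged from the two factors; once this is established the bipartiteness of a one-point union and the reduction via Lemma \ref{l:1} are purely formal. The cleanest justification rests on the fact, recorded in Remark \ref{left}, that all inner vertices of a diagram in $F$ lie on its middle path, so the copies of $T(\Delta_1)$ and $T(\Delta_2)$ sit in the two disjoint interiors and can touch only at the common initial vertex.
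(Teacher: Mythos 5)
Your proof is correct and follows essentially the same route as the paper: both identify $T(\Delta_1\circ\Delta_2)$ as the union of $T(\Delta_1)$ and $T(\Delta_2)$ glued at the initial vertex, conclude bipartiteness of the wedge, and then invoke Lemma \ref{l:1} to pass to the reduced product $\Delta_1\Delta_2$. The only difference is that you also spell out closure under inversion and membership of the trivial diagram, which the paper leaves implicit as obvious.
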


\begin{proof}
Suppose that $\Delta_1, \Delta_2$ belong to $\overrightarrow F$. The Thompson graph $T(\Delta_1\circ \Delta_2)$ is the union of $T(\Delta_1)$ and $T(\Delta_2)$ with vertices $\iota(\Delta_1)$ and $\iota(\Delta_2)$ identified. Hence $T(\Delta_1\circ\Delta_2)$ is bipartite. By Lemma \ref{l:1}, the graph
$T(\Delta_1\Delta_2)$ is bipartite as well.
\end{proof}

\subsection{The subgroup $\protect\overrightarrow F$ is isomorphic to $F_3$}

%In order to prove Theorem \ref{thm:1} we divide it to three lemmas; Lemmas \ref{lm2}, \ref{lm3} and \ref{lm4}.
%In the proof of the following lemma, if a diagram $\Delta$ has $n+1$ vertices, we denote them by $\iota(\D)+i$ for $i=0,\dots,n$.
%Note that if $\D$ and $\D'$ belong to $\overrightarrow F$, then $\D\oplus\D'$ belongs to $\overrightarrow F$. Indeed, $T(\D\oplus\D')$ is the union of the two disjoint graphs $T(\D)$ and $T(\D')$ with two additional edges connecting $\iota(\D')$ with $\iota(\D)$. Therefore, if $T(\D)$ and $T(\D')$ are bipartite, so is $T(\D\oplus\D')$.

\begin{lemma}\label{lm2}
The Jones' subgroup $\overrightarrow F$  coincides with the subgroup $H$ which is the smallest subgroup of $F$ that contains $x_0x_1$ and closed under addition. \end{lemma}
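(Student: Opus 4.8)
The plan is to establish the two inclusions $H\subseteq\overrightarrow F$ and $\overrightarrow F\subseteq H$ separately. For $H\subseteq\overrightarrow F$, recall that $\overrightarrow F$ is already a subgroup by Proposition \ref{sub} and that $H$ is by definition the \emph{smallest} subgroup of $F$ containing $x_0x_1$ and closed under $\oplus$. Hence it suffices to verify two things: that $x_0x_1\in\overrightarrow F$, which is immediate since $T(x_0x_1)$ is bipartite (Figure \ref{fig:x0x1}), and that $\overrightarrow F$ is closed under the addition $\oplus$.

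The heart of the first inclusion is a computation of the Thompson graph of a sum. Writing $\Delta_1\oplus\Delta_2=\pi\circ(\Delta_1+\Delta_2)\circ\pi\iv$ and tracking the top-most and bottom-most incoming edges at each inner vertex, I would argue as follows. The top cell $\pi$ contributes only its over-the-top edge, which ends at the terminal vertex $\tau(\Delta_1\oplus\Delta_2)$ and is therefore discarded from $T$; likewise for the bottom edge of $\pi\iv$. The remaining bottom edges of $\pi$ and top edges of $\pi\iv$ are identified with the boundary arcs of $\Delta_1$ and $\Delta_2$. At the glue vertex $\tau(\Delta_1)=\iota(\Delta_2)$, now an inner vertex, the two edges picked up by $T$ are exactly the top and bottom arcs of $\Delta_1$. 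Consequently $T(\Delta_1\oplus\Delta_2)$ is the disjoint union of $T(\Delta_1)$ and $T(\Delta_2)$ together with a pair of parallel edges joining $\iota(\Delta_1)$ to $\iota(\Delta_2)$. Such a graph is bipartite precisely when both $T(\Delta_i)$ are: two-color each factor, then globally flip the coloring of the $T(\Delta_2)$-factor so that $\iota(\Delta_2)$ gets the color opposite to $\iota(\Delta_1)$, noting that the parallel edges form only a length-$2$ (even) cycle. Since, by the remark following the definition of $\oplus$, the sum of reduced diagrams is reduced (or trivial), this gives closure of $\overrightarrow F$ under $\oplus$, and therefore $H\subseteq\overrightarrow F$.

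For the reverse inclusion I would first record that every $x_ix_{i+1}$ lies in $H$. Indeed $\1\in H$ and $H$ is $\oplus$-closed, so $\1\oplus\Delta\in H$ whenever $\Delta\in H$; by Lemma \ref{lm1.5} we have $\1\oplus(x_ix_{i+1})=x_{i+1}x_{i+2}$, so starting from $x_0x_1\in H$ an induction yields $x_ix_{i+1}\in H$ for all $i\ge0$. It then remains to prove $\overrightarrow F\subseteq\langle x_ix_{i+1}:i\ge0\rangle$, which I would do by strong induction on the number of cells of a reduced $\Delta\in\overrightarrow F$. The trivial diagram is handled by the base case. If $\Delta$ splits as $\Delta=\Delta_1\oplus\Delta_2$ (equivalently $\Delta$ fixes $\tfrac12$, so the roots of both trees are carets and the diagram separates), then by the graph computation above each $T(\Delta_i)$ is bipartite, so $\Delta_1,\Delta_2\in\overrightarrow F$ have strictly fewer cells and lie in $H$ by induction, whence $\Delta=\Delta_1\oplus\Delta_2\in H$. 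If $\Delta$ does not split, I would multiply $\Delta$ on the left by a suitable product of the generators $x_jx_{j+1}$ (all in $H\cap\overrightarrow F$) to obtain a reduced element of $\overrightarrow F$ with strictly fewer cells, and conclude by induction.

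The hard part will be this last reduction in the non-split case: exhibiting, for any nontrivial bipartite $\Delta$, a generator $g$ among the $x_jx_{j+1}$ such that $g\Delta$ is reduced, has a bipartite Thompson graph, and has fewer cells. The bipartite two-coloring of $T(\Delta)=J(\Delta)$ is the tool here — it constrains how adjacent leaves can be collapsed, and it is exactly the parity data that forces the carets to occur in the ``ternary'' pattern matching the isomorphism $\overrightarrow F\cong F_3$, so that a compatible pair of leaves can be merged at once by a single $x_jx_{j+1}$. Verifying that such a coloring-compatible merge always exists and keeps the Thompson graph bipartite is the multiplicative analogue of the additive computation carried out in the first half, and is where the real content of the lemma lies.
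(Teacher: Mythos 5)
Your first inclusion is correct and complete: the identification of $T(\Delta_1\oplus\Delta_2)$ with the disjoint union of $T(\Delta_1)$ and $T(\Delta_2)$ joined by a pair of parallel edges from $\iota(\Delta_1)$ to $\tau(\Delta_1)=\iota(\Delta_2)$ is exactly right, and it usefully fills in what the paper passes over with ``Clearly $H$ is inside $\overrightarrow F$.'' The derivation of $x_ix_{i+1}\in H$ from Lemma \ref{lm1.5} and the split case $\Delta=\Delta_1\oplus\Delta_2$ of your induction are fine as well. The genuine gap is the non-split case, which you explicitly defer (``the hard part \ldots where the real content of the lemma lies''): that case \emph{is} the lemma, and the specific plan you sketch for it would fail. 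You hope to exhibit a single generator $g$ among the $x_jx_{j+1}$ so that dividing by $g$ on the left yields a smaller element of $\overrightarrow F$. Take $\Delta=x_0^2x_1x_2\iv=(x_0x_1)^2(x_2x_3)\iv$, a non-split element of $\overrightarrow F$: its reduced diagram has $5$ leaves, whereas $(x_0x_1)\iv\Delta=x_0x_1x_3\iv x_2\iv$ has $6$ leaves, and left multiplication by any other $(x_jx_{j+1})^{\pm1}$ only lengthens the normal form. So no single coloring-compatible merge exists here. Moreover, your appeal to ``the ternary pattern matching the isomorphism $\overrightarrow F\cong F_3$'' is circular at this stage: in the paper that isomorphism (Lemma \ref{lm4}) is a consequence of the present lemma, not an available tool.

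The two ingredients your outline is missing, and which the paper supplies, are these. First, an auxiliary family of elements of $H$ beyond the generators: the paper proves $x_0^nx_1x_2^{-n+1}\in H$ for every $n\ge1$ by repeatedly adding the trivial diagram $\1$ on the \emph{right} ($\Delta\mapsto\Delta\oplus\1$, computed via Lemma \ref{lm1}); your proposal only ever uses left addition $\1\oplus\Delta$, which shifts indices (Lemma \ref{lm1.5}) but never produces these elements. The example above is exactly $x_0^2x_1x_2\iv$, which lies in $H$ for this reason and for no simpler one. Second, a normal-form argument that exploits bipartiteness: if $T(\Delta)$ has an upper edge $(0,j)$ with $j>1$, the positive part of the normal form begins with $x_0^nx_i$, $i>0$, and if the bottom-most cell of the $x_0^n$-block had its top edge joining vertices $0$ and $2$, the two-coloring would force vertices $1$ and $2$ into the same part, so the lower edge into vertex $2$ would have to come from vertex $0$, creating a dipole and contradicting reducedness. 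Hence the normal form begins with $x_0^nx_1$, and one divides on the left by $x_0x_1$ (if $n=1$) or by $x_0^nx_1x_2^{-n+1}$ (if $n>1$), strictly shortening the normal form and closing the induction. Without the right-addition trick and this parity-plus-reducedness analysis, your argument stops exactly where the proof has to begin.
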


\proof
Clearly $H$ is inside $\overrightarrow F$. Also from Lemma \ref{lm1} it follows that
if we add the trivial diagram $\1$ on the right
to the reduced diagram representing $x_0x_1$, we get the diagram corresponding to the normal form
$x_0x_0x_1 (x_0x_1x_2)^{-1}=x_0x_0x_1x_2^{-1}(x_0x_1)^{-1}$. Hence
$x_0x_0x_1x_2^{-1}$ also belongs to $H$. If we add to this element the
diagram $\1$ on the right, we get $(x_0x_0x_0x_1)(x_0x_1x_2x_2)^{-1}$,
and so  the element $x_0^3x_1x_2^{-2}$ belongs to $H$. By induction, we see
that all elements $x_0^nx_1x_2^{-n+1}$ belong to $H$. Now consider an
arbitrary reduced diagram $\Delta$ in $\overrightarrow F$. Let us enumerate the vertices of $\Delta$ from left to right: $0,1,...,s$ so that $\iota(\Delta)=0, \tau(\Delta)=s$.

If the Thompson graph $T(\Delta$) does not contain
top nor bottom edges connecting %$j<k$
$j>1$
with $0$, then $\Delta$ is a sum
of $\Delta'$ and the trivial diagram $\1$ (added on the left). The diagram $\Delta'$
also belongs to $\overrightarrow F$ (its graph $T(\Delta')$ is bipartite), so by induction $\Delta'$
is in $H$, and $\Delta$ is in $H$ also. So assume that $T(\Delta)$ contains an edge
$(0,j), j>1$. %j<\tau(\Delta)$.
Without loss of generality we can assume that this is an upper edge, that is, it belongs to the  positive part of
the diagram. Therefore the positive part of the normal form for $\Delta$ in
the generators $x_0, x_1, x_2,...$ starts with $x_0$. Let it start with
$x_0^nx_i, i>0$. The bottom-most cell corresponding to the prefix $x_0^n$ cannot
have both its bottom edges on the maximal positive path of $\Delta$, i.e. its
top edge cannot connect $0$ with $2$. Indeed if it connects
$0$ with $2$, then $2$ and $0$ are in
different parts of the bipartite graph $T(\Delta)$. The vertex $1$
then belongs to the same part as $2$. Then $2$ cannot be connected with $1$ by a lower edge from $T(\Delta)$, so it has to be connected with $0$, which means that the diagram $\Delta$
is not reduced, a contradiction.
Therefore there is an edge connecting $1$ with
$j'\le j$. Hence the normal form corresponding to $\Delta$ starts
with $x_0^nx_1$. If $n=1$, then we can divide by $x_0x_1$ (on the left) and
get an element from $\overrightarrow F$ with a shorter normal form. Hence $\Delta$ is in $H$
since $x_0x_1$ is in $H$. If $n>1$, then  we can replace $x_0^nx_1$ by
$x_2^{n-1}$, and the resulting element would still be in $\overrightarrow F$. Since its
normal form is shorter than that of $\Delta$, it is in $H$, and so $\Delta$ is in $H$.
\endproof

The proof of Lemma \ref{lm2} proves the following.

\begin{lemma}\label{lm3'} The subgroup $\overrightarrow F$ is generated by two
sets $X=\{x_ix_{i+1}, i\ge 0\}$ and $X'=
\{x_i^{n+1}x_{i+1}x_{i+2}^{-n}, i\ge 0, n\ge 1\}$.
\end{lemma}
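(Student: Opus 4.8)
The plan is to treat Lemma \ref{lm3'} as a direct reading of the induction already carried out in the proof of Lemma \ref{lm2}, tracking at each step which element is multiplied in. The technical device I would isolate first is the \emph{index-shift} map $\phi(g)=\1\oplus g$. By Lemma \ref{lm0} (taking $a=c=\1$) the identity $(\1\oplus b)(\1\oplus d)=\1\oplus bd$ holds, so $\phi$ is an endomorphism of $F$; by the remark following Lemma \ref{lm1.5} it satisfies $\phi(x_i)=x_{i+1}$, hence $\phi$ raises every index by one. Consequently $\phi(x_ix_{i+1})=x_{i+1}x_{i+2}$ and $\phi(x_i^{n+1}x_{i+1}x_{i+2}^{-n})=x_{i+1}^{n+1}x_{i+2}x_{i+3}^{-n}$, so that $\phi(X)\subseteq X$ and $\phi(X')\subseteq X'$.

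First I would check $\langle X\cup X'\rangle\subseteq\overrightarrow F$. By Lemma \ref{lm2} we have $\overrightarrow F=H$, which contains $x_0x_1$, contains the elements $x_0^{n+1}x_1x_2^{-n}$ ($n\ge1$) exhibited in that proof (its elements $x_0^nx_1x_2^{-n+1}$, reindexed), and is closed under addition, so $\1\in H$ gives $\phi(H)\subseteq H$. Applying $\phi^i$ to $x_0x_1$ and to $x_0^{n+1}x_1x_2^{-n}$ places all of $X$ and all of $X'$ inside $H=\overrightarrow F$.

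For the reverse inclusion I would re-run the induction of Lemma \ref{lm2} on the size of the reduced diagram, now recording the generator consumed at each reduction. In the first case $\Delta=\1\oplus\Delta'=\phi(\Delta')$ with $\Delta'\in\overrightarrow F$ smaller; by induction $\Delta'$ is a product of elements of $X\cup X'$, and since $\phi$ is a homomorphism with $\phi(X)\subseteq X$ and $\phi(X')\subseteq X'$, so is $\Delta$. In the second case the normal form of $\Delta$ begins with $x_0^nx_1$, and left-multiplication by one generator strictly shrinks $\Delta$: for $n=1$ the generator is $x_0x_1\in X$, and for $n>1$ replacing the prefix $x_0^nx_1$ by $x_2^{n-1}$ is exactly the factorization $\Delta=(x_0^nx_1x_2^{-(n-1)})\,\Delta'''$ with $\Delta'''\in\overrightarrow F$ smaller and $x_0^nx_1x_2^{-(n-1)}=x_0^{(n-1)+1}x_1x_2^{-(n-1)}\in X'$. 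By induction $\Delta'''$ is a product of generators, hence so is $\Delta$; the base case is the empty product.

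The only genuine bookkeeping — and the step I would be most careful to state cleanly — is matching the second-case multiplier against the stated form of $X'$: one must confirm that $x_0^nx_1x_2^{-(n-1)}$ for $n>1$ is the member $x_i^{m+1}x_{i+1}x_{i+2}^{-m}$ of $X'$ with $i=0$ and $m=n-1\ge1$, and that the shift $\phi$ respects the defining ranges $i\ge0$, $n\ge1$. I would also note that the induction measure must be diagram size (number of cells or leaves), not normal-form length, since the latter is preserved by $\phi$ and hence unchanged in the first case. No new information about the graphs $T(\Delta)$ is needed: every geometric fact (bipartiteness surviving the reductions, the location of the edge $(0,j)$, the shape of the normal form) is already supplied by the proof of Lemma \ref{lm2}, which is precisely why the statement is recorded as a consequence of that proof rather than proved afresh.
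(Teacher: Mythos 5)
You follow essentially the same route as the paper: there, Lemma \ref{lm3'} is deduced in one paragraph from the proof of Lemma \ref{lm2}, by noting that $X\cup X'\subseteq\overrightarrow F$ was already established there, that every element of $\overrightarrow F$ is either a sum $\1\oplus\Delta'$ with $\Delta'\in\overrightarrow F$, or a product of an element of $X\cup X'$ with an element of $\overrightarrow F$ having shorter normal form, or the inverse of such an element, and that $\la X\cup X'\ra$ is closed under $\Delta\mapsto\1\oplus\Delta$ by Lemma \ref{lm1.5}; your endomorphism $\phi$ is exactly this last observation made explicit. One small omission: you drop the paper's clause ``or is the inverse of such an element,'' which is needed because the edge $(0,j)$ produced in the proof of Lemma \ref{lm2} may be a lower edge, in which case it is $\Delta\iv$ whose normal form starts with $x_0^nx_1$; this costs nothing, since $\la X\cup X'\ra$ is closed under inversion.

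The genuine problem is the one technical claim you added, namely that ``the induction measure must be diagram size (number of cells or leaves), not normal-form length.'' You are right that normal-form length alone fails in the first case (it is preserved by $\phi$), but diagram size fails in the second case: it need not strictly decrease when a generator is factored off on the left. Concretely, take $\Delta=x_0x_1x_2x_3=(x_0x_1)(x_2x_3)\in\overrightarrow F$. Its normal form begins with $x_0x_1$, so your second case passes to $\Delta'''=x_2x_3$. But both reduced diagrams have exactly $6$ leaves, hence $10$ cells: on one hand $x_2x_3=\1\oplus\1\oplus(x_0x_1)$ has two more leaves than $x_0x_1$; on the other hand the positive part of $x_0x_1x_2x_3$ has lettered cells at positions $0,1,2,3$, and the cell labeled $x_3$ already forces width at least $6$. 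So with your measure the induction step makes no progress. The repair is easy and should be stated: induct on the pair (normal-form length, number of cells) ordered lexicographically --- your first case preserves the first coordinate and decreases the second by $2$, your second case strictly decreases the first coordinate --- or, equivalently, strip all leading summands at once, writing $\Delta=\phi^k(\Delta'')$ with $\Delta''$ not of the form $\1\oplus\cdot$, and then induct on normal-form length alone, which strictly drops at each application of the second case. This second reading is in effect what the paper's one-paragraph proof does, via the closure of $\la X\cup X'\ra$ under $\1\oplus\cdot$.
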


\proof Indeed, in the proof of Lemma \ref{lm2} we proved that $X$ and $X'$ are inside $\overrightarrow F$, and every element of $\overrightarrow F$ is either a sum of the trivial diagram $\1$ and some other diagram from $\overrightarrow F$ or is a product of an element from $X\cup X'$ and a reduced diagram from $\overrightarrow F$ with a shorter normal form or is the inverse of such an element. Since by Lemma \ref{lm1.5} the subgroup generated by $X\cup X'$ is closed under addition of the trivial diagram $\1$ on the left, the lemma is proved. \endproof

\begin{lemma}\label{lm3} The subgroup $\overrightarrow F$ is generated by three elements $x_0x_1, x_1x_2, x_2x_3$.
\end{lemma}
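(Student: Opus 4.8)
The plan is to set $K = \langle x_0x_1,\, x_1x_2,\, x_2x_3\rangle$ and prove $K = \overrightarrow F$. Since the three generators obviously lie in $\overrightarrow F$, we have $K \subseteq \overrightarrow F$ for free. For the reverse inclusion I would invoke Lemma \ref{lm3'}, which gives $\overrightarrow F = \langle X \cup X'\rangle$ with $X = \{x_ix_{i+1} : i \ge 0\}$ and $X' = \{x_i^{n+1}x_{i+1}x_{i+2}^{-n} : i \ge 0,\ n \ge 1\}$. Thus it suffices to show $X \subseteq K$ and $X' \subseteq K$, working throughout with the commutation relation $x_ix_j = x_jx_{i+1}$ for $j < i$ coming from the presentation $x_i^{x_j} = x_{i+1}$.

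For $X \subseteq K$, the idea is to conjugate by the available element $x_0x_1$. A short computation with the commutation relation shows $(x_j)^{x_0x_1} = x_{j+2}$ for $j \ge 2$, together with $(x_1)^{x_0x_1} = x_3$; consequently $(x_ix_{i+1})^{x_0x_1} = x_{i+2}x_{i+3}$ for every $i \ge 1$. Starting from the generators $x_1x_2$ and $x_2x_3$ and repeatedly conjugating by $x_0x_1 \in K$ then produces the chains $x_1x_2, x_3x_4, x_5x_6,\dots$ and $x_2x_3, x_4x_5, x_6x_7,\dots$; together with $x_0x_1$ this is all of $X$.

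For $X' \subseteq K$ I would first establish, by induction on $m$, the power formula
\[
(x_ix_{i+1})^m = x_i^m\, x_{i+1}x_{i+3}x_{i+5}\cdots x_{i+2m-1},
\]
each inductive step pushing one new $x_i$ leftward and one $x_{i+1}$ rightward through the commutation relation. Comparing the instance $(x_0x_1)^{n+1} = x_0^{n+1}x_1x_3\cdots x_{2n+1}$ with $(x_2x_3)^{n} = x_2^{n}x_3x_5\cdots x_{2n+1}$, the odd-indexed tails $x_3x_5\cdots x_{2n+1}$ cancel, yielding the telescoping identity
\[
x_0^{n+1}x_1x_2^{-n} = (x_0x_1)^{n+1}(x_2x_3)^{-n},
\]
whose right-hand side lies in $K$. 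Finally, applying the index-shift map $\phi(a) = \1 \oplus a$ (which raises every index by $1$ by Lemma \ref{lm1.5} and is a homomorphism by Lemma \ref{lm0}) $i$ times to this identity gives $x_i^{n+1}x_{i+1}x_{i+2}^{-n} = (x_ix_{i+1})^{n+1}(x_{i+2}x_{i+3})^{-n}$, whose right-hand side lies in $K$ since $X \subseteq K$. Hence $X' \subseteq K$, and combining the two inclusions yields $K = \overrightarrow F$.

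The main obstacle is the inclusion $X' \subseteq K$: unlike the elements of $X$, those of $X'$ are not reached by a single conjugation, and the delicate point is spotting the telescoping cancellation that rewrites $x_0^{n+1}x_1x_2^{-n}$ as $(x_0x_1)^{n+1}(x_2x_3)^{-n}$. Once that identity is in hand, the index shift $\phi$ disposes of general $i$ uniformly, and the $X$ part is routine by comparison.
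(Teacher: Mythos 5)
Your proposal is correct and follows essentially the same route as the paper: both reduce to Lemma \ref{lm3'}, obtain $X$ by conjugating $x_1x_2$ and $x_2x_3$ by $x_0x_1$, establish the identity $x_0^{n+1}x_1x_2^{-n}=(x_0x_1)^{n+1}(x_2x_3)^{-n}$, and then shift indices by adding trivial diagrams on the left to get all of $X'$. The only difference is presentational: you derive the identity for all $n$ from an explicit power formula $(x_ix_{i+1})^m = x_i^m x_{i+1}x_{i+3}\cdots x_{i+2m-1}$ with a telescoping cancellation, whereas the paper computes the case $n=1$ directly and says ``similarly'' for general $n$.
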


\proof Let $X$ and $X'$ be the sets from Lemma \ref{lm3'}. It is obvious that for every $j>2$ the element $x_jx_{j+1}$ is equal to $(x_{j-2}x_{j-1})^{x_0x_1}$. Thus the set $X=\{x_jx_{j+1}, j\ge 0\}$ is contained in the subgroup $\la x_0x_1, x_1x_2, x_2x_3\ra$.
It remains to show that $X'\subseteq \la X\ra$. Note that
\begin{equation}
\begin{split}
\nonumber x_0^2x_1x_2^{-1} & = x_0x_1x_1^{-1}x_0x_1x_2^{-1} = x_0x_1x_0x_2^{-1}x_1x_2^{-1} =
x_0x_1x_0x_1x_3^{-1}x_2^{-1}  \\
& = (x_0x_1)^2(x_2x_3)^{-1}.
\end{split}
\end{equation}

%$$
%x_0^2x_1x_2^{-1} = x_0x_1x_1^{-1}x_0x_1x_2^{-1} = x_0x_1x_0x_2^{-1}x_1x_2^{-1} =
%x_0x_1x_0x_1x_3^{-1}x_2^{-1} =\\ (x_0x_1)^2(x_2x_3)^{-1}.
%$$
 Similarly,
$$x_0^{n+1}x_1x_2^{-n}=(x_0x_1)^{n+1}(x_2x_3)^{-n}$$ for every $n\ge 1$. Adding several trivial diagrams $\1$ on the left, we get
$$x_j^{n+1}x_{j+1}x_{j+2}^{-n}=(x_jx_{j+1})^{n+1}(x_{j+2}x_{j+3})^{-n}$$ for every $j\ge 0$.
\endproof

\begin{lemma}\label{lm4} The subgroup $\overrightarrow F$ is isomorphic to $F_3$.
\end{lemma}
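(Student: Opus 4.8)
The plan is to exhibit $\overrightarrow F$ as a homomorphic image of $F_3$ via the standard presentation and then to prove that this surjection is injective. Recall from \cite{GS} that $F_3$ is the diagram group over $\la x\mid x=x^3\ra$ and admits the presentation $\la y_0,y_1,\dots\mid y_ky_i=y_iy_{k+2},\ i<k\ra$ (the case $n=3$ of the standard presentation of $F_n$, which for $n=2$ is exactly the presentation of $F$ recalled in Section \ref{sec:pre}). I would define $\phi\colon F_3\to \overrightarrow F$ on generators by $\phi(y_i)=x_ix_{i+1}$. To see that $\phi$ is well defined it suffices to check, for $i<j$, the relation $(x_jx_{j+1})(x_ix_{i+1})=(x_ix_{i+1})(x_{j+2}x_{j+3})$; this is a short computation inside $F$ using its defining relations $x_bx_a=x_ax_{b+1}$ (for $a<b$): pushing $x_i$ and then $x_{i+1}$ to the left past $x_jx_{j+1}$ raises the two indices $j,j+1$ by $2$ each. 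Surjectivity of $\phi$ is immediate from Lemmas \ref{lm3'} and \ref{lm3}, since $\overrightarrow F$ is generated by $\{x_ix_{i+1}:i\ge 0\}$, all of which lie in the image.

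The main obstacle is injectivity. I would first reduce it to a statement about positive elements. By the normal form for $F_3$ coming from the above presentation (the analogue of \eqref{NormForm}), every $w\in F_3$ can be written uniquely as $PN\iv$, where $P$ and $N$ are positive words in the $y_i$ (i.e. elements of the positive submonoid $F_3^+$) whose top indices $i_m$ and $j_n$ are distinct. Since $\phi$ sends each $y_i$ to the positive element $x_ix_{i+1}$ of $F$, both $\phi(P)$ and $\phi(N)$ are positive elements of $F$, and $\phi(w)=\1$ forces $\phi(P)=\phi(N)$ in $F$. Thus injectivity of $\phi$ follows once we know that $\phi$ is injective on the positive submonoid $F_3^+$: this equality would give $P=N$, which together with the condition $i_m\ne j_n$ forces $P=N=\1$ and hence $w=\1$.

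It therefore remains to prove that the monoid homomorphism $\phi\colon F_3^+\to F^+$ is injective, which I expect to be the delicate step. Here $F_3^+$ is the set of positive diagrams over $\la x\mid x=x^3\ra$ (ternary trees) and $F^+$ the positive diagrams over $\la x\mid x=x^2\ra$ (binary trees). It is worth stressing that this map is \emph{not} induced by any local replacement of carets: a ternary caret subdivides an interval into thirds, which is never a dyadic subdivision, so the isomorphism is abstract rather than geometric, and in particular $\phi$ does not preserve the number of leaves (already $\phi(y_0)=x_0x_1$ is a reduced diagram with an even number of leaves, whereas a ternary tree always has an odd number of leaves). I would prove positive injectivity by induction on the length of a positive word, reconstructing the first generator $y_{i_1}$ (equivalently, the leftmost ternary caret of the corresponding tree) from the reduced positive diagram of $\phi(y_{i_1}\cdots y_{i_k})$ in $F$, peeling it off, and applying the inductive hypothesis; the combinatorial analysis of how the diagrams of the products $\prod_a x_{i_a}x_{i_a+1}$ look, and the verification that this peeling is well defined, is where the real work lies. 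Granting positive injectivity, $\phi$ is an injective surjection, so $\overrightarrow F\cong F_3$.
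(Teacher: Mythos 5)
Your construction and verification of the homomorphism $\phi\colon F_3\to F$, $y_i\mapsto x_ix_{i+1}$, is correct (the computation with the relations $x_bx_a=x_ax_{b+1}$ does give $(x_jx_{j+1})(x_ix_{i+1})=(x_ix_{i+1})(x_{j+2}x_{j+3})$ for all $i<j$), surjectivity onto $\overrightarrow F$ does follow from Lemmas \ref{lm3'} and \ref{lm3}, and the reduction of injectivity to injectivity on the positive submonoid $F_3^+$ is valid. But there is a genuine gap exactly where you flag it: injectivity of $\phi$ on $F_3^+$ is never proved. What you give is a plan (``reconstruct the leftmost ternary caret from the reduced diagram of the image and peel it off''), and this is not a routine verification, because passing from a positive word in the $y_i$ to the \emph{reduced} diagram of its image in $F$ involves dipole cancellations; your own observation that $\phi(y_0)=x_0x_1$ has four leaves, while any caret-for-caret substitution applied to a ternary tree produces an odd number of leaves, is precisely a symptom of this. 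Recovering $y_{i_1}$ (and its exponent) from the normal form of $\phi(y_{i_1}\cdots y_{i_k})$ therefore requires a combinatorial analysis that is the real content of the lemma, and as written the proposal only establishes that $\overrightarrow F$ is a quotient of $F_3$.

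The paper closes this gap with one sentence, and so can you: by \cite[Theorem 4.13]{B}, every proper homomorphic image of $F_3$ is Abelian, so a homomorphism of $F_3$ with non-Abelian image is injective; since $x_0x_1$ and $x_1x_2$ do not commute, $\phi$ is an isomorphism onto $\overrightarrow F$. This makes the entire positive-monoid analysis unnecessary. One further correction: your side remark that $\phi$ is ``not induced by any local replacement of carets'' is wrong. Section \ref{sec:5} of the paper (see also \cite{BCS}) exhibits exactly such a caret replacement: the cell $x\to x^3$ is replaced by a fixed two-cell $(x,x^3)$-diagram over $\la x\mid x=x^2\ra$, and this substitution sends generators of $F_3$ to the elements $x_jx_{j+1}$. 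The subtlety you noticed is not that no substitution exists, but that the substituted image of a reduced $F_3$-diagram need not be reduced in $F$ --- which is one more reason the peeling argument you propose is delicate.
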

\proof  The elements $x_0x_1, x_1x_2, x_2x_3$ satisfy the defining
relations of $F_3$ (see \cite[page 54]{GS}). All proper homomorphic images of $F_3$ are Abelian \cite[Theorem 4.13]{B}. Since $x_0x_1, x_1x_2$
do not commute, the natural homomorphism from $F_3$ onto $\overrightarrow F$ is an isomorphism.
\endproof

As an immediate corollary of Theorem \ref{thm:1}, we get

\begin{proposition}[Compare with Proposition \ref{p1}] \label{p2} $\overrightarrow F$ is a subalgebra of the Thompson algebra $F$ generated by one element $x_0x_1$.
\end{proposition}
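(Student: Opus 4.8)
The plan is to observe that Proposition \ref{p2} is just a reformulation of the characterization of $\overrightarrow F$ already obtained in Theorem \ref{thm:1} (equivalently in Lemma \ref{lm2}), once one unwinds what ``the subalgebra generated by $x_0x_1$'' means. The Thompson algebra structure on $F$ consists of the group operations (multiplication, inversion, identity $\1$) together with the binary addition $\oplus$. By definition a subalgebra is a subset closed under all of these, and a subset is closed under the group operations precisely when it is a subgroup. Hence the subalgebra of $F$ generated by $x_0x_1$ is exactly the smallest subset of $F$ that contains $x_0x_1$, is a subgroup, and is closed under $\oplus$ --- that is, the smallest subgroup of $F$ containing $x_0x_1$ and closed under addition.

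I would then invoke Lemma \ref{lm2}, which identifies that smallest subgroup closed under addition (the subgroup $H$) with $\overrightarrow F$. Combining the two identifications gives that the subalgebra generated by $x_0x_1$ equals $\overrightarrow F$, which is the assertion. This parallels exactly the way Proposition \ref{p1} deduces that $F$ is generated as a Thompson algebra by $x_0$ from the fact that $\1\oplus x_i=x_{i+1}$.

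As an explicit sanity check confirming that $x_0x_1$ alone suffices, I would note the two inclusions directly. Writing $A$ for the subalgebra generated by $x_0x_1$: since by Theorem \ref{thm:1} the group $\overrightarrow F$ is itself a subgroup closed under $\oplus$ containing $x_0x_1$, minimality of $A$ gives $A\subseteq\overrightarrow F$. Conversely, $\1\in A$ (it is the algebra identity), so by Lemma \ref{lm1.5} we have $x_1x_2=\1\oplus(x_0x_1)\in A$ and $x_2x_3=\1\oplus(x_1x_2)\in A$; since $A$ is a subgroup and $\overrightarrow F=\la x_0x_1,x_1x_2,x_2x_3\ra$ by Lemma \ref{lm3}, this yields $\overrightarrow F\subseteq A$.

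There is essentially no obstacle here; the only point deserving a moment's care is that ``closed under $\oplus$'' in the subalgebra sense means closure under the binary operation $a\oplus b$ with both arguments ranging over the subalgebra, which is precisely the notion of ``closed under addition'' used in defining $H$ in Lemma \ref{lm2}. Accordingly I would keep the written proof to a single sentence, deriving the statement as an immediate corollary of Theorem \ref{thm:1}.
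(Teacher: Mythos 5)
Your proof is correct and matches the paper's approach: the paper derives Proposition \ref{p2} as an immediate corollary of Theorem \ref{thm:1} (equivalently Lemma \ref{lm2}), exactly as you do by unwinding ``subalgebra generated by $x_0x_1$'' into ``smallest subgroup containing $x_0x_1$ and closed under $\oplus$.'' Your extra sanity check via Lemmas \ref{lm1.5} and \ref{lm3} is sound but not needed.
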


\subsection{$\protect\overrightarrow F$ is the stabilizer of the set of dyadic fractions with odd sums of digits}\label{sec:stab}

Let $\D$ be a reduced diagram in $F$ and $T(\D)$ the associated graph. Let $\D^+$ be the positive subdiagram. %Recall (Remark \ref{left}) that the upper edges of $T(\D)$ are precisely the edges of $\D^+$ which are left bottom edges of some cell in $\D^+$.
%\begin{Lemma}\label{outgoing}
%Let $v$ be an inner vertex of $\D$. If $v$ has $m$ outgoing edges in $\D^+$, then $v$ has $m-1$ outgoing upper edges in $T(\D)$. Moreover, the outgoing upper edges of $v$ in $T(\D)$ are precisely the outgoing edges of $v$ in $\D_+$, apart from the top-most outgoing edge.
%\end{Lemma}
%
%\begin{proof}
%The vertex $v$ is the bottom vertex of a unique cell $\pi$ in $\D^+$ (see Remark \ref{left}). The top-most outgoing edge of $v$ in $\D^+$ is the \emph{right} bottom edge of $\pi$. Hence, it does not belong to $T(\D)$. Let $e$ be any other outgoing edge of $v$ in $\D_+$. Since every edge in $\D_+$, other than the top edge $\Top$, is a bottom edge of some cell in $\D^+$, the edge $e$ is either a right or a left bottom edge of some cell $\pi'$. If $e$ were a right edge, $v$ would be the bottom vertex of the cell $\pi'$, in contradiction to it being the bottom vertex of $\pi$ (note that $\pi\neq \pi'$). Hence, $e$ is the left bottom edge of $\pi'$ and in particular belongs to $T(\D)$.
%\end{proof}
Recall (Section \ref{frac}) that $\D^+$ describes a binary subdivision of $[0,1]$. Every edge $e$ in $\D^+$ corresponds to a dyadic interval of length $\frac{1}{2^m}$ for some integer $m\ge 0$. We will call this length the \emph{weight of the edge $e$} and denote it by $\omega(e)$. The inner vertices of $\D^+$ correspond to the break points of the subdivision. Note that if $v$ is an inner vertex of $\D^+$, the dyadic fraction $f$ is the corresponding break point of the subdivision and $p$ is a positive path in $\D^+$ from $\ini$ to $v$, then the weight of the path $p$ (that is, the sum of weights of its edges) is equal to the fraction $f$.

\begin{lemma}\label{<}
Let $v$ be an inner vertex of $\D$. Let $e$ be the (unique) upper incoming edge of $v$ in $T(\D)$ and $e_1$ be any outgoing upper edge of $v$ in $T(\D)$, then $\omega(e_1)<\omega(e)$.
\end{lemma}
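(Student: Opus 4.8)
The plan is to translate the statement into the language of the binary subdivision of $[0,1]$ that $\D^+$ describes (Section~\ref{frac}) and to argue entirely about the lengths of the associated dyadic intervals. First I would use Remark~\ref{left} to identify the two edges: the upper incoming edge $e$ of $v$ is the left bottom edge of the unique cell $\pi_v\in\D^+$ whose bottom vertex is $v$, and the outgoing upper edge $e_1$ is the left bottom edge of some cell $\pi'\in\D^+$ with $\iota(\pi')=v$. Writing $f$ for the break point of the top subdivision corresponding to $v$, the interval of $e$ ends at $f$ (since $e$ enters $v$) and the interval of $e_1$ begins at $f$ (since $e_1$ leaves $v$).

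Next I would record two elementary consequences of the cell structure. Since the two bottom edges of any $(x,x^2)$-cell are the halves of the interval of its top edge, one has $\omega(e_1)=\tfrac12\,\omega(\mathbf{top}(\pi'))$; likewise $\pi_v$ bisects the interval of $\mathbf{top}(\pi_v)$ at its midpoint $f$, so its left and right bottom edges have equal weight $\omega(e)$, and the right bottom edge has interval $[f,f+\omega(e)]$. Hence it suffices to show $\omega(\mathbf{top}(\pi'))\le\omega(e)$: the factor $\tfrac12$ then yields $\omega(e_1)\le\tfrac12\omega(e)<\omega(e)$, which is exactly the desired strict inequality.

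The reduction above turns the lemma into the single geometric claim that I expect to be the main obstacle: among all edges of $\D^+$ whose interval has $f$ as left endpoint, the one of largest weight has interval $[f,f+\omega(e)]$. Such intervals share the endpoint $f$, so in the laminar subdivision they are pairwise nested, and there is one of maximal weight, with interval $J$. Because $v$ is an inner vertex we have $f\notin\{0,1\}$, so $J\ne[0,1]$ and $J$ is one of the two halves of its parent interval. It cannot be a left half, for then the parent would be an edge-interval with left endpoint $f$ and strictly larger weight, contradicting maximality; so $J$ is a right half, and the bisection producing it splits its parent at the midpoint $f$. The cell performing this bisection has bottom vertex $v$, hence equals $\pi_v$, so $J=[f,f+\omega(e)]$. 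Finally, since $\iota(\pi')=v$ the interval of $\mathbf{top}(\pi')$ has left endpoint $f$, so its weight is at most $\omega(e)$, which completes the argument.
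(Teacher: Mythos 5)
Your proof is correct, and its skeleton matches the paper's: both pass through Remark \ref{left}, the cell $\pi_v$, and the halving of weights across a cell, and both ultimately establish $\omega(e_1)\le\frac12\omega(e)$. The difference is in how the crucial comparison is justified. The paper works with the planar structure of $\D^+$: it takes $e'$ to be the right bottom edge of $\pi_v$, notes that $e'$ is not the left bottom edge of any cell and hence (by Remark \ref{left}) not an edge of $T(\D)$, so $e_1\ne e'$ and $e_1$ lies under $e'$; from there the inequality $\omega(e_1)\le\frac12\omega(e')=\frac12\omega(e)$ is declared obvious from the construction of Section \ref{frac}. You never invoke planar position; instead you prove an interval-maximality statement --- that $[f,f+\omega(e)]$ is the largest edge-interval of $\D^+$ with left endpoint $f$ --- using nestedness of dyadic intervals and the left-half/right-half dichotomy for the parent interval, and then bound $\omega(\mathbf{top}(\pi'))\le\omega(e)$. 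Your route is longer, but it makes fully rigorous exactly the step the paper waves at, replacing planar intuition with an order-theoretic argument. One small point you use implicitly, when you conclude that the cell bisecting at $f$ ``has bottom vertex $v$, hence equals $\pi_v$'': distinct vertices of $\D^+$ must correspond to distinct break points. This is true (every vertex of $\D^+$ lies on $\bott(\D^+)$, along which the break points strictly increase), and the paper's Section \ref{frac} tacitly assumes the same bijection, but since your argument genuinely leans on it you should state it explicitly.
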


\begin{proof}
By Remark \ref{left}, $e$ is the left bottom edge of some cell $\pi$ in $\D^+$. Let $e'$ be the right bottom edge of the cell $\pi$. Clearly, the edge $e'$ is the top-most outgoing edge of $v$ in $\D$. Since $e'$ is not the left bottom edge of any cell in $\D^+$, by Remark \ref{left}, the edge $e'$ does not belong to $T(\D)$. Hence, the edge $e_1\neq e'$ so that $e_1$ lies under $e'$ in $\D^+$. From the construction in Section \ref{frac} it is obvious that $\omega(e_1)\le \frac{1}{2}\omega(e')=\frac{1}{2}\omega(e)<\omega(e)$.
\end{proof}

\begin{definition}
Let $\D$ be a (not necessarily reduced) diagram. Let $v$ be an inner vertex of $\D$. Since every inner vertex of $\D$ has a unique incoming upper edge in $T(\D)$, there is a unique positive path in $T(\D)$ from $\ini$ to $v$, composed entirely of upper edges. We call this path the \emph{top path from $\ini$ to $v$}.
\end{definition}

\begin{lemma}\label{digits}
Let $\D$ be a reduced diagram in $F$. Let $v$ be an inner vertex of $\D$ and $f$ the corresponding break point of the subdivision associated with $\D^+$. Let $p$ be the top path from $\ini$ to $v$. Then, the length of $p$ is equal to the sum of digits in the binary form of $f$.
\end{lemma}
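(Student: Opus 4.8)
The plan is to read the binary expansion of $f$ directly off the weights of the edges of the top path $p$, using the monotonicity supplied by Lemma \ref{<}. Write $p = e^{(1)}, e^{(2)}, \ldots, e^{(k)}$ for the edges of the top path listed in order from $\iota(\D)$ to $v$, and let $\iota(\D) = v_0, v_1, \ldots, v_k = v$ be the vertices it passes through, so that $e^{(i)}$ runs from $v_{i-1}$ to $v_i$. By the definition of the top path each $e^{(i)}$ is an upper edge of $T(\D)$, hence by Remark \ref{left} a left bottom edge of a cell of $\D^+$; in particular $p$ is a positive path lying entirely in $\D^+$, and its length is exactly $k$.

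First I would record the weights. Each edge satisfies $\omega(e^{(i)}) = 2^{-m_i}$ for some integer $m_i \geq 1$ (an upper edge is the left bottom edge of a cell, hence has weight at most $\frac12$). For $1 \leq i \leq k-1$ the edge $e^{(i)}$ is the unique incoming upper edge of the inner vertex $v_i$, while $e^{(i+1)}$ is an outgoing upper edge of $v_i$; Lemma \ref{<} therefore gives $\omega(e^{(i+1)}) < \omega(e^{(i)})$. Since the consecutive weights are powers of $\frac12$ and strictly decrease, the exponents satisfy $m_1 < m_2 < \cdots < m_k$ and are in particular pairwise distinct.

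Next I would compute $f$. Because $p$ is a positive path in $\D^+$ from $\iota(\D)$ to $v$ and $f$ is the break point of the top subdivision corresponding to $v$, the weight identity recalled just before Lemma \ref{<} yields
$$ f = \sum_{i=1}^{k} \omega\big(e^{(i)}\big) = \sum_{i=1}^{k} 2^{-m_i}. $$
Since the exponents $m_1, \ldots, m_k$ are pairwise distinct, the right-hand side is already a terminating binary expansion of $f$, carrying a digit $1$ in precisely the positions $m_1, \ldots, m_k$ and $0$ elsewhere. By uniqueness of the terminating binary expansion of a dyadic fraction, this is the binary form of $f$, so its digit sum equals $k$, the number of edges of $p$. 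Hence the length of $p$ is the sum of the binary digits of $f$.

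The only substantive point to watch is that the exponents arising along the top path are genuinely distinct; this is exactly the content of Lemma \ref{<}, and once it is in hand the statement collapses to the elementary fact that a sum of distinct negative powers of two reads off a binary expansion. I would also check the boundary behaviour — $v$ being an inner vertex forces $0 < f < 1$ and $k \geq 1$ — to be sure we are matching against the finite binary expansion rather than an eventually-$1$ tail.
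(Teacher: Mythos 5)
Your proof is correct and follows essentially the same route as the paper's: apply Lemma \ref{<} along the top path to get strictly decreasing weights, identify $f$ with the sum of those weights via the observation preceding Lemma \ref{<}, and conclude that a sum of distinct negative powers of two has digit sum equal to the number of terms. The extra checks you add (that each exponent is at least $1$, and the boundary remark about $0<f<1$) are harmless refinements of the same argument.
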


\begin{proof}
Let $e_1,\dots,e_k$ be the edges of $p$. For each $i=1,\dots k$, the weight $\omega(e_i)=\frac{1}{2^{m_i}}$ for some positive integer $m_i$. By Lemma \ref{<}, $\omega(e_1)>\dots>\omega(e_k)$. Thus, the weight of $p$ is $\omega(p)=\sum_{i=1}^k{\frac{1}{2^{m_i}}}$
where  $\frac{1}{2^{m_1}}>\dots>\frac{1}{2^{m_k}}$. Clearly, the sum of digits of $\omega(p)$ in binary form is $k$. Therefore, the sum of digits of $f=\omega(p)$ is equal to the number of edges in $p$.
\end{proof}

Viewed as a dyadic rearrangement, a reduced diagram $\D$ takes the break points of the top subdivision (associated with $\D^+$) to the break points of the bottom subdivision (associated with $\D^-$). Reflecting the diagram $\D$ about a horizontal line shows that the analogue of Lemma \ref{digits} holds for the bottom subdivision: If $v$ is an inner vertex of $\D$ and $p$ is the \emph{bottom path from $\ini$ to $v$}, composed entirely of lower edges of $T(\D)$, then the length of $p$ is equal to the sum of digits of the  break point corresponding to $v$ in the bottom subdivision.

\begin{cor}\label{bipartite}
Let $S$ be the set of dyadic fractions with odd sums of digits. Let $\D$ be a reduced diagram which stabilizes $S$ and $v$ be an inner vertex of $\D$. Then, the length of the top path from $\ini$ to $v$ and the length of the bottom path from $\ini$ to $v$ have the same parity.
\end{cor}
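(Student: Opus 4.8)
The plan is to convert both path lengths into sums of binary digits, transport the question across $\D$ regarded as a homeomorphism, and then read off the equality of parities from the stabilization of $S$. The argument is essentially a bookkeeping one once the two forms of Lemma~\ref{digits} are in hand, so I would not expect a serious obstacle; the only place that calls for care is the parity comparison at the very end.

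First I would fix the inner vertex $v$ and let $f$ be the break point of the top subdivision (associated with $\D^+$) corresponding to $v$, and $f'$ the break point of the bottom subdivision (associated with $\D^-$) corresponding to $v$. By Lemma~\ref{digits} the length of the top path from $\ini$ to $v$ equals the sum of the binary digits of $f$; by the reflected analogue of that lemma, stated in the paragraph following its proof, the length of the bottom path from $\ini$ to $v$ equals the sum of the binary digits of $f'$. Hence it suffices to prove that $f$ and $f'$ have binary digit sums of the same parity. At this point I would invoke Section~\ref{frac}: as a dyadic rearrangement, $\D$ carries the top subdivision onto the bottom subdivision and sends the break point of the top subdivision attached to an inner vertex to the break point of the bottom subdivision attached to the same vertex, so that $\D(f)=f'$.

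It then remains to check that $\D$ preserves the parity of the binary digit sum of every inner break point, and this is exactly where the hypothesis $\D(S)=S$ is used. Since $v$ is an inner vertex, $f\in(0,1)$ is a dyadic fraction, and because both $\D$ and $\D\iv$ are dyadic rearrangements, $\D$ restricts to a bijection of the dyadic fractions in $(0,1)$. Injectivity together with $\D(S)=S$ gives $f\in S$ if and only if $\D(f)=f'\in S$; as $S$ is precisely the set of dyadic fractions with odd digit sum, this says the digit sum of $f$ is odd exactly when that of $f'$ is odd, so the two sums share a parity, which is the assertion. The only step that demands a little care is this final equivalence: one must use genuine bijectivity of $\D$ on the dyadic fractions, so that setwise stabilization of the odd-sum set $S$ also pins down its complement, and the parity statement then follows for the particular break point $f$ and its image $f'$.
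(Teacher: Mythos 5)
Your proposal is correct and follows essentially the same route as the paper's proof: both reduce the statement, via Lemma~\ref{digits} and its reflected analogue, to comparing the binary digit sums of the two break points attached to $v$, and then use that $\D$, as a dyadic rearrangement stabilizing $S$, carries the top break point to the bottom one. The only difference is that you spell out the bijectivity point (that setwise stabilization of $S$ plus injectivity yields the ``if and only if''), which the paper leaves implicit.
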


\begin{proof}
Let $f^+,f^-$ be the break points corresponding to $v$ in the top and bottom subdivisions respectively. Since $\D$ takes $f^+$ to $f^-$, the sum of digits of $f^+$ and the sum of digits of $f^-$ have the same parity. The result follows from Lemma \ref{digits} and its stated analogue.
\end{proof}

Now we are ready to prove

\begin{Theorem2}
Jones' subgroup $\overrightarrow F$ is the stabilizer of the set of dyadic fractions from the unit interval $[0,1]$ with odd sums of digits, under the standard action of $F$ on the interval $[0,1]$.
\end{Theorem2}

\begin{proof}
Let $S$ be the set of dyadic fractions with odd sums of digits. Let $\Delta$ be a reduced diagram which stabilizes $S$.
By Corollary \ref{bipartite}, for every inner vertex $v$, the length of the top path from $\ini$ to $v$ and the length of the bottom path from $\ini$ to $v$ have the same parity.
It is possible to use this parity to assign to every vertex of $T(\D)$ a label "0" or "1". Since for every vertex there is a unique top path and a unique bottom path from $\ini$ to the vertex, neighbors in $T(\D)$ have different labels and $T(\D)$ is bipartite.

The other direction follows from Theorem \ref{thm:1}.
On finite binary fractions $x_0x_1$ acts as follows:
\[
 x_0x_1(t) =
  \begin{cases}
   t=.00\alpha & \rightarrow t=.0\alpha  \\
   t=.010\alpha       & \rightarrow  t=.10\alpha \\
   t=.011\alpha       & \rightarrow t=.110\alpha \\
   t=.1\alpha       & \rightarrow t=.111\alpha 	
  \end{cases}
\]

In particular, $x_0x_1$ stabilizes the set $S$.
If $\Delta$ and $\D'$ are diagrams in $F$, then viewed as maps from $[0,1]$ to itself, the sum $\D\oplus\D'$ is defined as
\[(\Delta\oplus\Delta')(t) = \left\{
  \begin{array}{lr}
    \frac{\Delta(2t)}{2} & : t\in [0,\frac{1}{2}]\\
    \frac{\Delta'(2t-1)}{2}+\frac{1}{2} & : t\in [\frac{1}{2},1]
  \end{array}
\right.
\]
It is easy to see that if $\Delta$ and $\Delta'$ stabilize $S$, then $\D\oplus\D'$ stabilize $S$ as well.
Since by Theorem \ref{thm:1}, $\overrightarrow F$ is the smallest subgroup of $F$ containing $x_0x_1$ and closed under sums, the inclusion $\overrightarrow F\subset \Stab (S)$ follows.
\end{proof}

In order to prove Corollary \ref{thm:3} we will need the following observations.

\begin{remark}\label{cor}
Let $c=(x_0x_1)^{-1}\in F$. On finite binary fractions $c$ acts as follows:
\[
 c(t) =
  \begin{cases}
	t=.0\alpha\ & \rightarrow\ t=.00\alpha\\
	t=.10\alpha\ & \rightarrow\ t=.010\alpha\\
	t=.110\alpha\ & \rightarrow\ t=.011\alpha\\
	t=.111\alpha\ & \rightarrow\ t=.1\alpha
  \end{cases}
\]
In particular, if $t$ is a finite binary fraction and $m\in\mathbb{N}$ then for any large enough $n\in\mathbb{N}$, the first $m$ digits in the binary form of $c^n(t)$ are zeros. That is, $c^n(t)<\frac{1}{2^m}$.
\end{remark}

\begin{proof}
If the first digit of $t$ is $0$ then each application of $c$ adds another $0$ to the leading sequence of zeros in the binary form of $t$ so for every $n\ge m$ we get the result.
If the first digit of $t$ is $1$, then $t$ starts with a sequence of ones followed by a $0$. Let $l$ be the length of this sequence. If $l>3$ then each application of $c$ reduces the length of the sequence of ones by $2$. Thus, possibly after several applications of $c$, we can assume that $l=1$ or $l=2$.  In both cases, one application of $c$ yields $c(t)$ which starts with $0$ and we are done by the previous case.
\end{proof}

\begin{lemma}\label{remark}
Let $g$ be an element of R. Thompson group $F$. Then, there exists $m\in\mathbb{N}$ such that for any finite binary fraction $t<\frac{1}{2^m}$ the sum of digits of $t$ in binary form is equal to the sum of digits of $g(t)$.
\end{lemma}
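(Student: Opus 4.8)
The plan is to exploit the local structure of $g$ near the origin. Since $g$ is a piecewise-linear increasing self-homeomorphism of $[0,1]$ with finitely many breakpoints, all dyadic, and all slopes powers of $2$, it is linear in some neighborhood of $0$. As $g$ fixes the endpoints, in particular $g(0)=0$, there is a dyadic point $b>0$ and an integer $k\in\Z$ (the base-$2$ logarithm of the slope of $g$ at $0$, which may be positive, negative, or zero) such that $g(t)=2^k t$ for all $t\in[0,b]$.

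First I would fix $m\in\N$ large enough that $\frac{1}{2^m}\le b$ and, in addition, $m>k$ (the latter condition only matters when $k>0$). Then for every finite binary fraction $t<\frac{1}{2^m}$ we are in the linear range, so $g(t)=2^k t$, and the whole statement reduces to the claim that multiplication by the fixed power $2^k$ preserves the sum of binary digits of such small dyadic fractions.

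The key observation is that $t<\frac{1}{2^m}$ forces the first $m$ digits of the binary expansion of $t$ to vanish, so $t=.\,\underbrace{0\cdots0}_{m}\,d_{m+1}d_{m+2}\cdots$ with only finitely many nonzero digits $d_i$. Multiplying by $2^k$ merely shifts this expansion. If $k\ge 0$, then because $m>k$ the product is $2^k t=.\,\underbrace{0\cdots0}_{m-k}\,d_{m+1}d_{m+2}\cdots$, still lying strictly below $1$ and carrying exactly the same nonzero digits in the same order, only with fewer leading zeros. If $k<0$, then $2^k t$ instead gains $|k|$ further leading zeros. In either case no carrying occurs and the binary point is never crossed, so the nonzero digits of $t$ are reproduced verbatim in $g(t)$ and the two digit sums agree.

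The only point requiring genuine care is the bookkeeping on the sign of $k$ together with the verification that no digit is ever pushed across the binary point; this is precisely what the conditions $\frac{1}{2^m}\le b$ and $m>k$ guarantee. Beyond this elementary case analysis, the lemma is an immediate consequence of the local linearity of $g$ at $0$, so I do not expect any substantial obstacle.
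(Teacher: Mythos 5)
Your proposal is correct and follows essentially the same argument as the paper: both exploit that $g$ is linear with slope $2^k$ on an initial dyadic interval $[0,b]$, choose $m$ with $2^{-m}\le b$ and $m\ge k$, and observe that multiplication by $2^k$ is a binary shift that only adds or removes leading zeros, hence preserves the digit sum. The paper phrases this via the first segment $[0,\frac{1}{2^r}]$ of the binary subdivision associated with $g$ and takes $m=\max\{r,l\}$, but the content is identical.
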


\begin{proof}
The element $g$ maps some binary subdivision $B_1$ onto a subdivision $B_2$. The first segment of $B_1$ is of the form $J=[0,\frac{1}{2^{r}}]$ for some positive integer $r$. Since $0$ is mapped to $0$, on the segment $J$ the function $g$ is defined as a linear function with slope $2^l$ for some $l\in\mathbb{Z}$ and with constant number $0$. That is,
for every $t\in J$ we have $g(t)=2^lt$.
If $l\le 0$, then for any binary fraction $t\le \frac{1}{2^{r}}$, the application of $g$ adds $l$ zeros to the beginning of the binary form of $g$, which does not affect the sum of digits of $t$. In that case, taking $m=r$ would do.
If $l>0$ it is possible to take $m=\max\{r,l\}$. Since $m\ge r$, the binary fraction $t\in J$. Since $m\ge l$, the binary form of $t$ starts with at least $l$ zeros. The application of $g$ erases the first $l$ zeros and thus does not affect the sum of digits of $t$.
\end{proof}

Corollary \ref{thm:3} follows immediately from the following.

\begin{theorem}\label{question 2}
Let $h$ be an element of $F$ which does not belong to $\overrightarrow F$. Then the index $[\overrightarrow F:\overrightarrow F\cap h\overrightarrow Fh^{-1}]$ is infinite.
\end{theorem}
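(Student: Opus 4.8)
The plan is to argue by contradiction: assume the index $[\overrightarrow F:\overrightarrow F\cap h\overrightarrow Fh^{-1}]$ is finite, say equal to $N$, and derive a contradiction from the hypothesis $h\notin\overrightarrow F$ together with Theorem \ref{thm:2}. Write $K=\overrightarrow F\cap h\overrightarrow Fh^{-1}$ and $c=(x_0x_1)^{-1}$, which lies in $\overrightarrow F$ since $x_0x_1$ does. Because $h\notin\overrightarrow F$, Theorem \ref{thm:2} identifies $\overrightarrow F$ with the stabilizer of the set of dyadic fractions with odd sums of digits, so $h$ fails to preserve this set; hence there is a finite binary fraction $t_0$ for which the sum of digits of $t_0$ and the sum of digits of $h(t_0)$ have \emph{different} parities. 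Set $q=h(t_0)$.

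First I would extract \emph{arbitrarily large} powers of $c$ lying in $K$. Among the cosets $c^0K,\dots,c^NK$ two must coincide, so $c^d\in K$ for some $1\le d\le N$; as $K$ is a subgroup, $c^{kd}\in K$ for every $k\ge 1$. By the definition of $K$, for each such $k$ the element $c^{kd}$ can be written as $h f h^{-1}$ with $f\in\overrightarrow F$, so the conjugate $g_k:=h^{-1}c^{kd}h=f$ lies in $\overrightarrow F$. By Theorem \ref{thm:2} again, every element of $\overrightarrow F$ preserves the parity of the sum of digits of each dyadic fraction; applying this to $g_k$ and $t_0$ shows that the sum of digits of $g_k(t_0)$ has the same parity as that of $t_0$.

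Next I would evaluate $g_k(t_0)=h^{-1}\!\big(c^{kd}(q)\big)$ and compare parities using the two preparatory results. By Lemma \ref{remark} applied to $h^{-1}$, there is $M\in\N$ such that $h^{-1}$ preserves the sum of digits of every finite binary fraction smaller than $1/2^M$. By Remark \ref{cor}, since $q$ is a finite binary fraction, $c^{kd}(q)<1/2^M$ for all sufficiently large $k$ (the multiples of $d$ are cofinal in $\N$, so this causes no difficulty). Fixing such a $k$ and setting $s=c^{kd}(q)<1/2^M$, I note that $c\in\overrightarrow F$ preserves the parity of the sum of digits, so $s$ and $q$ have sums of digits of equal parity; and by the choice of $M$, the sum of digits of $h^{-1}(s)=g_k(t_0)$ has the same parity as that of $s$. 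Chaining these gives that the sum of digits of $g_k(t_0)$ has the same parity as that of $q=h(t_0)$.

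Combining the two computations forces the parities of the sums of digits of $t_0$ and of $h(t_0)$ to coincide, contradicting the choice of $t_0$; hence $K$ has infinite index. The step I expect to be the crux is precisely that a single power $c^d$ produced by the pigeonhole principle is insufficient: the argument requires the exponent to be arbitrarily large so that $c^{kd}(q)$ is driven below the threshold $1/2^M$ supplied by Lemma \ref{remark}. This is exactly why I promote $c^d\in K$ to the whole family $\{c^{kd}\}_{k\ge1}\subseteq K$ before invoking Remark \ref{cor}.
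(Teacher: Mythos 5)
Your proof is correct and follows essentially the same route as the paper's: force arbitrarily high powers of $c=(x_0x_1)^{-1}$ into the finite-index intersection, use Remark \ref{cor} to drive the relevant dyadic point below the threshold supplied by Lemma \ref{remark}, and derive a parity contradiction via Theorem \ref{thm:2}. The only immaterial differences are that you extract powers by pigeonhole on cosets where the paper invokes a uniform exponent $r$, and that (because the paper composes left to right) the paper's computation applies Lemma \ref{remark} to $h$ and Remark \ref{cor} to $t_1=h^{-1}(t)$, i.e.\ your argument is the paper's with the roles of $h$ and $h^{-1}$ interchanged, which is harmless since $h\notin\overrightarrow F$ iff $h^{-1}\notin\overrightarrow F$.
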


\begin{proof}
Let $h\notin \overrightarrow F$. If the index $[\overrightarrow F:\overrightarrow F\cap h\overrightarrow Fh^{-1}]$ is finite then there exists $r\in\mathbb{N}$ such that for every $g\in \overrightarrow F$, we have $g^r\in h\overrightarrow Fh^{-1}$. That is, $h^{-1}g^rh\in \overrightarrow F$. In particular, for every $k\in \mathbb{N}$ we have $h^{-1}g^{rk}h\in \overrightarrow F$.
Let $g=(x_0x_1)^{-1}\in \overrightarrow F$. We will show that for every $n$ large enough, $h^{-1}g^{n}h\notin \overrightarrow F$ and get the required contradiction.

Let $S$ be the set of finite binary fractions with odd sums of digits. Since $h^{-1}\notin \overrightarrow F$, there exists $t\in S$ such that $h^{-1}(t)\notin S$. Let $t_1=h^{-1}(t)$.
By Lemma \ref{remark} there exists $m$ for which the sum of digits of every binary fraction $<\frac{1}{2^m}$ is preserved by $h$. By Remark \ref{cor} for every $n$ large enough, $g^n(t_1)<\frac{1}{2^m}$. Since $g^n\in \overrightarrow F$ and $t_1\notin S$, the binary fraction $g^n(t_1)\notin S$. Since $g^n(t_1)<\frac{1}{2^m}$, the sum of digits of $h(g^n(t_1))$ is equal to the sum of digits of $g^n(t_1)$. Thus $h(g^n(t_1))\notin S$.
Therefore, (recall that the composition in $F$ is from left to right), $h^{-1}g^{n}h(t)=h(g^n(h^{-1}(t)))=h(g^n(t_1))\notin S$. The element $t$ belonging to $S$ implies that $h^{-1}g^{n}h$ does not stabilize $S$ and in particular $h^{-1}g^{n}h\notin \overrightarrow F$.

%Let $S$ be the set of finite binary fractions with odd sums of digits. Since $h^{-1}\notin \overrightarrow F$, there exists $t\in S$ such that $h^{-1}(t)\notin S$. Let $t_1=h^{-1}(t)$.
%By Lemma \ref{remark} there exists $m$ for which the sum of digits of every binary fraction which starts with $m$ zeros is preserved by $h$. By Remark \ref{cor} for every $n$ large enough, $g^n(t_1)$ starts with at least $m$ zeros. Since $g^n\in \overrightarrow F$ and $t_1\notin S$, the binary fraction $g^n(t_1)\notin S$. Since $g^n(t_1)$ starts with at least $m$ zeros, the sum of digits of $h(g^n(t_1))$ is equal to the sum of digits of $g^n(t_1)$. Thus $h(g^n(t_1))\notin S$.
%Therefore, (recall that the composition is from left to right), $h^{-1}g^{n}h(t)=h(g^n(h^{-1}(t)))=h(g^n(t_1))\notin S$. The element $t$ belonging to $S$ implies that $h^{-1}g^{n}h$ does not stabilize $S$ and in particular $h^{-1}g^{n}h\notin \overrightarrow F$.
\end{proof}

%\section{The group $F_n$ for an arbitrary $n$}
%
%
%Most of the results about $F_3$ from the previous section can be generalized to  $F_n$ for an arbitrary $n$.
%
%\begin{theorem} \label{th:fn} The multiplicative group of the subalgebra of $F$ generated by $x_0x_1...x_{n-2}$ is isomorphic to $F_n$.
%\end{theorem}
%

\section{The subgroup $\protect\overrightarrow{F_n}$}\label{sec:5}

In this section we generalize results of the previous sections from $2$ and $3$  to arbitrary $n$. It turns out that the generalization is quite natural. The proofs follow the same paths and for some theorems, the proof for arbitrary $n$ is almost identical to the proof of the particular case considered before.

\subsection{The definition of the subgroup}% $\protect\overrightarrow F_n$}

\begin{definition}
Let $\Delta$ be a (not necessarily reduced) diagram over the presentation $\P=\la x\mid x=x^2\ra$ and let $n\in\mathbb{N}$.
The diagram $\D$ is said to be \emph{$n$-good} if for every inner vertex $v$ the lengths of the top and bottom paths from $\ini$ to $v$ are equal modulo $n$.
\end{definition}

Note that if $n=2$, then being $2$-good is formally weaker than being bipartite. Lemma \ref{lm2n} shows that these conditions are in fact equivalent for reduced diagrams.  %%%*************************************I don't think that's completely accurate. It shows that it is equivalent for reduced diagrams. We have never shown that the property of being bi-partite or n-good is preserved under insertion of dipoles, and in fact it is not.
%The statement itelf is true of course. A diagram D is 2-good iff T(D) is bipartite.

The proof of the following lemma is completely analogous to the proof of Lemma \ref{l:1}, so we leave the proof to the reader.

\begin{lemma}\label{l:1n}
Suppose that a diagram $\Delta$ is obtained from a diagram $\Delta'$ by removing a dipole. Suppose that $\Delta'$ is $n$-good for some $n\in\mathbb{N}$. Then $T(\Delta)$ is $n$-good as well.
\end{lemma}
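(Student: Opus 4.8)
The plan is to follow the proof of Lemma~\ref{l:1} line by line, replacing the proper two-colouring of a bipartite graph by a labelling with values in $\Z/n\Z$. The first step is to record the reformulation of $n$-goodness that plays the role of ``bipartite'' here: a diagram $\Delta$ is $n$-good if and only if there is a function $\phi\colon V(T(\Delta))\to\Z/n\Z$ with $\phi(\iota(\Delta))=0$ such that every edge of $T(\Delta)$, oriented from left to right, raises the value of $\phi$ by exactly $1$ modulo $n$. Indeed, taking $\phi(v)$ to be the length of the top path from $\iota(\Delta)$ to $v$ modulo $n$, the condition defining $n$-goodness is exactly what is needed to verify that the \emph{lower} edges also raise $\phi$ by $1$; conversely, the existence of such a $\phi$ forces the top-path length and the bottom-path length to $v$ to both be congruent to $\phi(v)$, which is $n$-goodness. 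Both directions use only that each inner vertex has a unique incoming upper edge and a unique incoming lower edge, which holds for arbitrary, not necessarily reduced, diagrams. For $n=2$ this labelling is precisely a proper $2$-colouring, so the whole statement specialises to Lemma~\ref{l:1}.

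Now assume $\Delta'$ is $n$-good, fix a labelling $\phi'$ as above, and let $\Delta$ be obtained by removing one dipole. As in Lemma~\ref{l:1} there are two cases. If the dipole has the form $\pi\circ\pi\iv$, then $T(\Delta)$ is obtained from $T(\Delta')$ simply by deleting one inner vertex $w$ (the middle vertex of the dipole) together with its two incident edges, both of which run from the left vertex of the dipole to $w$; all other vertices and edges are untouched. Hence the restriction of $\phi'$ to $V(T(\Delta))$ is again a labelling of the required kind, and $\Delta$ is $n$-good.

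The case of a dipole $\pi\iv\circ\pi$ is where the single new idea enters. Let $p,q$ be the top and bottom vertices of the dipole and $u$ its left vertex. Exactly as in Lemma~\ref{l:1}, in $T(\Delta')$ the vertex $p$ is joined to $u$ by its bottommost (lower) edge $u\to p$ and $q$ is joined to $u$ by its topmost (upper) edge $u\to q$; crucially, both of these edges are oriented \emph{out of} $u$, so that $\phi'(p)=\phi'(u)+1=\phi'(q)$ in $\Z/n\Z$. This equality is the exact analogue of ``$p$ and $q$ carry the same sign'' in the bipartite argument, and it is precisely here that one must use the orientation: in $\Z/2\Z$ mere adjacency of $p$ and $q$ to $u$ already pins down their colour, whereas for general $n$ one needs that each edge contributes $+1$. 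Removing the dipole identifies $p$ with $q$ and erases the two edges $u\to p$ and $u\to q$. Defining $\phi$ on $T(\Delta)$ to equal $\phi'$ away from the merged vertex and to take the common value $\phi'(p)=\phi'(q)$ there, every surviving edge still raises $\phi$ by $1$: the identification is consistent exactly because $\phi'(p)=\phi'(q)$, and deleting edges only removes constraints. Thus $\phi$ witnesses that $\Delta$ is $n$-good.

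The only point needing genuine attention, and the sole place where one reexamines the geometry rather than copying the $n=2$ argument, is the description of how dipole removal transforms $T(\Delta')$: deletion of the degree-two vertex $w$ in the first case, and identification of $p$ with $q$ together with erasure of the two named edges in the second. This is checked directly from the primary definition of $T$ in terms of top-most and bottom-most incoming edges; one should resist appealing to Remark~\ref{left}, since $\Delta'$ is not reduced, but the verification is otherwise identical to the corresponding step inside the proof of Lemma~\ref{l:1}. Once it is granted, the equality $\phi'(p)=\phi'(q)$ does all the remaining work, and the passage from $n=2$ to general $n$ requires no further computation.
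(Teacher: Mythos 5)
Your proof is correct and is exactly the adaptation the paper intends: the paper omits the proof of Lemma \ref{l:1n}, declaring it ``completely analogous'' to that of Lemma \ref{l:1}, and your $\Z/n\Z$-labelling $\phi$ (top-path length modulo $n$) is precisely the generalization of the $\pm$ colouring used there, with your key step---that both dipole edges point \emph{out of} the left vertex, forcing $\phi'(p)=\phi'(q)$---being the exact analogue of the paper's observation that the top and bottom vertices of the dipole carry the same sign. The only additional content in your write-up is the (correct) observation that $n$-goodness is equivalent to the existence of such a labelling for arbitrary, not necessarily reduced, diagrams, which in particular shows directly that being $2$-good coincides with bipartiteness; the paper claims that equivalence only for reduced diagrams and only indirectly via Lemma \ref{lm2n}, so your direct verification is a mild, harmless strengthening rather than a divergence.
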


\begin{definition}\label{vecFn}
Let $n\in\mathbb{N}$. \emph{Jones' $n$-subgroup} $\overrightarrow{F_n}$ is the set of all reduced $n$-good diagrams $\D$ in $F$.
\end{definition}

In particular Jones' $1$-subgroup is the entire Thompson group $F$. Jones' $2$-subgroup coincides with Jones' subgroup $\overrightarrow F$.

The proof of the following proposition is identical to that of Proposition \ref{sub}.

\begin{proposition}
For every $n\in\mathbb{N}$, Jones' $n$-subgroup $\overrightarrow{F_n}$ is indeed a subgroup of $F$.
\end{proposition}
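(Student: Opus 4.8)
The plan is to transcribe the proof of Proposition~\ref{sub} almost verbatim, replacing ``bipartite'' by ``$n$-good'' throughout and invoking Lemma~\ref{l:1n} in place of Lemma~\ref{l:1}. First I would dispose of the identity and of inverses, which are immediate: the trivial diagram $\1$ has no inner vertices and is therefore vacuously $n$-good, while the defining congruence of $n$-goodness --- that the top- and bottom-path lengths from the initial vertex to an inner vertex $v$ agree modulo $n$ --- is manifestly symmetric under interchange of the two paths. Since the inversion $\Delta\mapsto\Delta\iv$ is precisely the horizontal flip that swaps $\topp$ and $\bott$ and preserves reducedness, $\Delta\iv\in\overrightarrow{F_n}$ whenever $\Delta\in\overrightarrow{F_n}$.

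The substance is closure under multiplication, and here I would follow the two-step shape of Proposition~\ref{sub}: given reduced $n$-good diagrams $\Delta_1,\Delta_2$, first show that the (possibly unreduced) composition $\Delta_1\circ\Delta_2$ is again $n$-good, then appeal to Lemma~\ref{l:1n} to descend to the reduced product $\Delta_1\Delta_2$, exactly as Lemma~\ref{l:1} was used before. To verify $n$-goodness of $\Delta_1\circ\Delta_2$, I would classify each inner vertex $v$ according to whether it lies strictly inside the $\Delta_1$-region, on the shared path $\bott(\Delta_1)=\topp(\Delta_2)$, or strictly inside the $\Delta_2$-region, and compare the top-path length $\ell^+(v)$ with the bottom-path length $\ell^-(v)$ in each case. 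For $v$ interior to $\Delta_1$ both paths remain inside $\Delta_1$, so the congruence $\ell^+(v)\equiv\ell^-(v)\pmod n$ is inherited from the $n$-goodness of $\Delta_1$; the symmetric statement holds for $v$ interior to $\Delta_2$ via the $n$-goodness of $\Delta_2$.

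The only genuinely non-formal point, which I expect to be the main obstacle, is the transfer across the shared boundary. For a vertex $w$ on $\bott(\Delta_1)=\topp(\Delta_2)$ the top-most incoming edge still comes from $\Delta_1$ whereas the bottom-most incoming edge now comes from $\Delta_2$, so $\ell^+(w)=\ell^+_{\Delta_1}(w)$ and $\ell^-(w)=\ell^-_{\Delta_2}(w)$. The key observation is that along this straight boundary path the bottom-path length $\ell^-_{\Delta_1}(w)$ and the top-path length $\ell^+_{\Delta_2}(w)$ each equal the position index of $w$ on the boundary, hence coincide; combining this with $\ell^+_{\Delta_1}(w)\equiv\ell^-_{\Delta_1}(w)$ and $\ell^+_{\Delta_2}(w)\equiv\ell^-_{\Delta_2}(w)\pmod n$ yields $\ell^+(w)\equiv\ell^-(w)\pmod n$. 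For $v$ interior to $\Delta_2$ the top path from $\iota$ enters $\Delta_2$ through some boundary vertex $w$ and then runs inside $\Delta_2$, so the same boundary identity reduces the claim to the $n$-goodness of $\Delta_2$ alone. Once this boundary bookkeeping is in place the congruence holds at every inner vertex, Lemma~\ref{l:1n} finishes the argument, and $\overrightarrow{F_n}$ is a subgroup.
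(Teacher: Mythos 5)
Your argument is correct and is essentially the paper's own: the paper simply declares the proof identical to that of Proposition~\ref{sub}, where one observes that $T(\Delta_1\circ\Delta_2)$ is $T(\Delta_1)$ and $T(\Delta_2)$ glued at their initial vertices (so the $n$-good congruence at each inner vertex is inherited from whichever factor contains it) and then applies Lemma~\ref{l:1n} to pass to the reduced product. The only comment is that your ``main obstacle'' is vacuous: elements of $F$ are $(x,x)$-diagrams, so the identified path $\bott(\Delta_1)=\topp(\Delta_2)$ is a single edge whose endpoints are $\iota(\Delta_1\circ\Delta_2)$ and $\tau(\Delta_1\circ\Delta_2)$, neither of which is an inner vertex; hence every inner vertex lies strictly inside one factor, its top and bottom paths never cross into the other factor, and the boundary bookkeeping of your last paragraph can be deleted.
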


%\begin{proof}
%Identical to the proof of \ref{sub}
%%Suppose that $\Delta_1, \Delta_2$ belong to $\overrightarrow F$. The Thompson graph $T(\Delta_1\circ \Delta_2)$ is the union of $T(\Delta_1)$ and $T(\Delta_2)$ with vertices $\iota(\Delta_1)$ and $\iota(\Delta_2)$ identified. Hence $T(\Delta_1\circ\Delta_2)$ is bipartite. By Lemma \ref{l:1}, the graph
%%$T(\Delta_1\Delta_2)$ is bipartite as well.
%\end{proof}

\subsection{The subgroup $\protect\overrightarrow{F}_{n-1}$ is isomorphic to $F_n$}

Let $n\ge 2$. We denote by $H_n$ the Thompson subalgebra of $F$ generated by $x_0\cdots x_{n-2}$, i.e., the smallest subgroup of $F$ containing $x_0\cdots x_{n-2}$ and closed under $\oplus$.

\begin{lemma}\label{technic}
Let $i,d\in\mathbb{N}\cup\{0\}$. Let $(m_0,\dots,m_d)$ be a sequence of positive integers such that $m_d\ge 2$ if $d>0$. Then,
$$\prod_{k=0}^{d}{x_{i+k}^{m_k}}\prod_{k=1}^{n-2}{x_{i+d+k}}\left[\left(\prod_{k=0}^{d-1}{x_{i+n-1+k}^{m_k}}\right)x_{i+n-1+d}^{m_d-1}\right]\iv \in H_n.$$
\end{lemma}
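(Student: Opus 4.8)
The plan is to prove membership by induction, using throughout that $H_n$ is a subgroup containing $\1$ and closed under $\oplus$, and that it contains every ``generator'' $g_j:=x_jx_{j+1}\cdots x_{j+n-2}$ (obtained from $x_0\cdots x_{n-2}$ by adding $\1$ on the left $j$ times, via Lemma \ref{lm1.5}). Write $A(i,d;m_0,\dots,m_d)$ for the element displayed in the statement. First I would reduce to $i=0$: by Lemma \ref{lm1.5} adding $\1$ on the left increases every index by $1$, so $A(i,d;m_0,\dots,m_d)$ is obtained from $A(0,d;m_0,\dots,m_d)$ by adding $\1$ on the left $i$ times. Since $\1\in H_n$ and $H_n$ is $\oplus$-closed, $a\in H_n\Rightarrow\1\oplus a\in H_n$, so it suffices to treat $i=0$.

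Next I would record two algebraic facts. The first is a collapsing identity handling the last exponent being $1$: the factor $x_{d+n-2}$ coming from $\prod_{k=1}^{n-2}x_{d+k}$ and the top factor $x_{n-1+d}$ of the bracketed term share the same index, so they cancel, giving
\[
A(0,d;m_0,\dots,m_{d-1},1)=A(0,d-1;m_0,\dots,m_{d-1}).
\]
This lets us treat an $m_d=1$ slot as a genuine admissible instance with strictly smaller parameters. The second fact is the main recursion: for $m_d\ge 2$,
\[
A(0,d;m_0,\dots,m_d)=\bigl(A(0,d;m_0,\dots,m_d-1)\oplus\1\bigr)\cdot w,
\]
where $w$ is a product of generators $g_j$.

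Granting these, I would induct on $N=\sum_{k=0}^{d}m_k$. The base case $N=1$ forces $d=0$, $m_0=1$, and $A(0,0;1)=g_0\in H_n$. For the step, the element $A(0,d;m_0,\dots,m_d-1)$ has strictly smaller $N$ (using the collapsing identity when $m_d-1=1$), hence lies in $H_n$ by induction; since $H_n$ contains $\1$ and each $g_j$ and is $\oplus$-closed, the right-hand side of the recursion lies in $H_n$, completing the induction.

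The heart of the argument, and the main obstacle, is establishing the recursion, i.e. computing the effect of adding $\1$ on the right. For this I would use Lemma \ref{lm1}: the positive tree of $a\oplus\1$ is the positive tree of $a$ carried into the left child of a new root caret whose right child is a single leaf, and dually for the bottom tree. Reading off the normal form, this extra right leaf increases every parameter $r_i$ by $1$, so exactly the cells on the right spine of $a$'s positive tree (those previously labelled $1$ because $r_i=0$) now contribute their letters $x_{\ell_i}$; the same analysis applies on the bottom. Comparing the resulting normal form with that of $A(0,d;m_0,\dots,m_d)$ and moving the discrepancy to one side with the relations $x_jx_i=x_ix_{j+1}$ $(j>i)$ identifies the correction word $w$ as a product of the $g_j$. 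This generalizes the telescoping of the $n=3$ case, where $(x_0x_1)\oplus\1=x_0^2x_1(x_0x_1x_2)\iv$ and hence $A(0,0;2)=\bigl((x_0x_1)\oplus\1\bigr)(x_0x_1)$ with $w=g_0$. The bookkeeping of index shifts in this last step is the technical crux; the surrounding induction is routine.
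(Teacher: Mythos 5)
Your reduction to $i=0$, the collapsing identity, and the base case are all correct, and for $d=0$ your recursion is exactly the paper's move. The gap is the main recursion for $d\ge 1$, and it is not a bookkeeping gap: the recursion you need is not the one that the right-spine computation produces. Keep your notation $g_j=x_j\cdots x_{j+n-2}$ and $A(0,d;m_0,\dots,m_d)$. Every element of this family has (after collapsing) a ``staircase'' normal form $PN\iv$ in which the positive word $P$ starts with $x_0$ and the negative word $N$ starts with $x_{n-1}$ (or is empty), and carrying out the Lemma \ref{lm1} analysis you describe gives
\begin{equation*}
B\oplus\1=(x_0P)\cdot\Bigl[\Bigl(\prod_{k=0}^{n-1}x_k\Bigr)N\Bigr]\iv,
\qquad\text{hence}\qquad
(B\oplus\1)\cdot g_0=x_0\,B\,x_{n-1}\iv .
\end{equation*}
Applied to $B=A(0,d;m_0,\dots,m_d)$, the right-hand side is $A(0,d;m_0+1,m_1,\dots,m_d)$: adding $\1$ on the right and multiplying by $g_0$ increments the \emph{first} exponent, with correction word exactly $g_0$. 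Your recursion instead increments the \emph{last} exponent $m_d$, and for $d\ge 1$ these are different elements, so your correction word $w=\bigl(A(0,d;m_0,\dots,m_d-1)\oplus\1\bigr)\iv A(0,d;m_0,\dots,m_d)$ is not $g_0$, and nothing in the sketch shows it lies in $\la g_j : j\ge 0\ra$. (Your $d=0$ verification is misleading precisely because there the first and last exponents coincide.)

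In fact no normal-form ``bookkeeping'' can close this: take $n=3$, $d=1$, $(m_0,m_1)=(1,2)$, so $A(0,1;1,2)=x_0x_1^2x_2(x_2x_3)\iv$ and $A(0,1;1,1)=x_0x_1$. Then $(x_0x_1)\oplus\1=x_0^2x_1(x_0x_1x_2)\iv$, and computing the unique correction word gives
\begin{equation*}
w=\bigl((x_0x_1)\oplus\1\bigr)\iv A(0,1;1,2)
=x_0x_1x_2x_3^2x_4\,(x_0x_1x_4x_5)\iv
=g_0\cdot\bigl(x_2x_3^2x_4(x_4x_5)\iv\bigr)\cdot g_0\iv ,
\end{equation*}
and the middle factor $x_2x_3^2x_4(x_4x_5)\iv=\1\oplus\bigl(\1\oplus A(0,1;1,2)\bigr)$ is a shifted copy of the very element you are trying to place in $H_n$. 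So proving $w\in\la g_j\ra$ is (modulo the elementary closure properties of $H_n$) equivalent to the instance of the lemma under proof, and your induction is circular: the recursion it rests on is true, but only as a consequence of the lemma, not as an input to it. This is exactly the trap the paper's argument is built to avoid. The paper only ever uses the first-exponent move displayed above (cost: one factor $g_0$), and it manufactures the leading exponents equal to $1$ by the strip-and-shift device: apply the induction hypothesis to $(m_j-1,m_{j+1},\dots)$ where $j$ is minimal with $m_j\ge 2$, add $\1$ on the left $j$ times, and then the same right-addition move prepends $x_0\cdots x_{j-1}$ while raising the exponent of $x_j$, after which $m_0$ is pumped up. If you replace your last-exponent recursion by the first-exponent one, your induction on $\sum_k m_k$ stalls at the case $m_0=1$, $d\ge 1$, and restoring it forces that strip-and-shift case analysis --- i.e., the paper's proof.
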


\begin{proof}
We first prove the Lemma for $i=0$ by induction on $d$.
For $d=0$, the argument is similar to the one in the proof of Lemma \ref{lm2}.
If we add the trivial diagram $\1$ on the right
to the reduced diagram representing $\prod_{k=0}^{n-2}{x_k}$, we get the diagram corresponding to the normal form
$x_0^2 \prod_{k=1}^{n-2}{x_k} (\prod_{k=0}^{n-1}{x_k})^{-1}=x_0^2 (\prod_{k=1}^{n-2}{x_k}) x_{n-1}^{-1}(\prod_{k=0}^{n-2}{x_k})^{-1}$.
Hence $x_0^2 (\prod_{k=1}^{n-2}{x_k}) (x_{n-1})^{-1}$ also belongs to $H_n$. If we add to this element the
diagram $\1$ on the right, we get $x_0^3 \prod_{k=1}^{n-2}{x_k} [(\prod_{k=0}^{n-1}{x_k})x_{n-1}]^{-1}=x_0^3 (\prod_{k=1}^{n-2}{x_k}) (x_{n-1}^{2})\iv(\prod_{k=0}^{n-2}{x_k})^{-1}$.
Multiplying on the right by $\prod_{k=0}^{n-2}{x_k}$ we get that $x_0^3 (\prod_{k=1}^{n-2}{x_k}) (x_{n-1}^2)^{-1}$ belongs to $H_n$.

Note the effect of the addition of $\1$ on the right to the element $x_0^2 (\prod_{k=1}^{n-2}{x_k}) (x_{n-1})^{-1}$. The negative part of the normal form got multiplied on the left by $\prod_{k=0}^{n-1}{x_k}$, when originally it started with $x_{n-1}$. Similarly, the positive part of the normal form was multiplied by $x_0$, when originally it started with $x_0$. In particular the exponents of both $x_0$ and $x_{n-1}$ were increased by one. %Since the resulting element could be written as a product ending with $(\prod_{k=0}^{n-2}{x_k})\iv$ we could multiply it from the right by $\prod_{k=0}^{n-2}{x_k}$
%Adding $\1$ from the right to this element would have the effect of multiplying the negative part of its normal form by $\prod_{k=0}^{n-1}{x_k}$ on the left (note that the negative part of the current normal form starts with $x_{n-1}$). Similarly, the positive part of the normal form would be multiplied by $x_0$ on the left (note that it currently starts with $x_0$). The resulting element can be written as a product which ends with $(\prod_{k=0}^{n-2}{x_k})^{-1}\in H_n$. Thus dividing by it from the right would result in an element of $H_n$ where the exponents of $x_0$ and $x_{n-1}$ are both greater by one. % the leading terms both in the positive and negative parts have exponents greater by one.
By repeating the process of adding $\1$ on the right and multiplying by $\prod_{k=0}^{n-2}{x_k}$ (on the right) we get that for every positive integer $m_0$,
the element $x_0^{m_0} (\prod_{k=1}^{n-2}{x_k}) (x_{n-1}^{m_0-1})\iv$ belongs to $H_n$ as required.

Assume that the lemma holds for every non negative integer $\le d$.
Let $(m_0,\dots,m_{d+1})$ be a sequence of positive integers such that $m_{d+1}\ge 2$.
Let $j=\min\{1,\dots,d+1\}$ such that $m_j\ge 2$. For all $r=1,\dots,d+1-j$ let $n_r=m_{r+j}$. Let $n_0=m_j-1$.
We use the induction hypothesis on $d+1-j$ with the sequence $(n_0,\dots,n_{d+1-j})$. Thus, we have that
$$\prod_{k=0}^{d+1-j}{x_k^{n_k}}\prod_{k=1}^{n-2}{x_{d+1-j+k}}\left[\left(\prod_{k=0}^{d+1-j-1}{x_{n-1+k}^{n_k}}\right)x_{n-1+d+1-j}^{n_{d+1-j}-1}\right]\iv \in H_n$$
Adding the trivial diagram $\1$ $j$ times on the left we get by Lemma \ref{lm1.5} the element
$$\prod_{k=0}^{d+1-j}{x_{k+j}^{n_k}}\prod_{k=1}^{n-2}{x_{d+1+k}}\left[\left(\prod_{k=0}^{d-j}{x_{n-1+k+j}^{n_k}}\right)x_{n+d}^{n_{d+1-j}-1}\right]\iv.$$ %\in H_n$$
We assume that $j<d+1$, the other case being similar. Then, adding $\1$ on the right results in the following element.
Note that as in the case $d=0$ the positive part of the normal form gets multiplied by $\prod_{k=0}^j{x_k}$ (it currently starts with $x_j$). Similarly, the negative part of the normal form is multiplied by $\prod_{k=0}^{n-1+j}{x_k}$.
$$\left(\prod_{k=0}^{j-1}{x_k}\right)x_{j}^{n_0+1}\prod_{k=1}^{d+1-j}{x_{k+j}^{n_k}}\prod_{k=1}^{n-2}{x_{d+1+k}}
\left[\left(\prod_{k=0}^{n-2+j}x_k\right)x_{n-1+j}^{n_0+1}\left(\prod_{k=1}^{d-j}{x_{n-1+k+j}^{n_k}}\right)x_{n+d}^{n_{d+1-j}-1}\right]\iv$$ %\in H_n$$
Substituting $n_k$ by $m_{k+j}$ and $n_0+1$ by $m_j$ we get
$$\left(\prod_{k=0}^{j-1}{x_k}\right)x_{j}^{m_j}\prod_{k=1}^{d+1-j}{x_{k+j}^{m_{k+j}}}\prod_{k=1}^{n-2}{x_{d+1+k}}
\left[\left(\prod_{k=0}^{n-2+j}x_k\right)x_{n-1+j}^{m_j}\left(\prod_{k=1}^{d-j}{x_{n-1+k+j}^{m_{k+j}}}\right)x_{n+d}^{m_{d+1}-1}\right]\iv$$ %\in H_n$$
Then, shifting the indexes we get that
$$\prod_{k=0}^{j-1}{x_k}\prod_{k=j}^{d+1}{x_{k}^{m_{k}}}\prod_{k=1}^{n-2}{x_{d+1+k}}
\left[\prod_{k=0}^{n-2+j}x_k\left(\prod_{k=j}^{d}{x_{n-1+k}^{m_{k}}}\right)x_{n+d}^{m_{d+1}-1}\right]\iv \in H_n$$
%Since $\prod_{k=0}^{n-2}{x_k}\in H_n$ we have that,
Multiplying on the right by $\prod_{k=0}^{n-2}{x_k}$ cancels a prefix of the negative part of the normal form so we have that
$$\prod_{k=0}^{j-1}{x_k}\prod_{k=j}^{d+1}{x_{k}^{m_{k}}}\prod_{k=1}^{n-2}{x_{d+1+k}}
\left[\prod_{k=0}^{j-1}x_{n-1+k}\left(\prod_{k=j}^{d}{x_{n-1+k}^{m_{k}}}\right)x_{n+d}^{m_{d+1}-1}\right]\iv \in H_n$$
Since $m_k=1$ for all $k=1,\dots,j-1$ we have,
$$x_0\prod_{k=1}^{d+1}{x_{k}^{m_{k}}}\prod_{k=1}^{n-2}{x_{d+1+k}}
\left[x_{n-1}\left(\prod_{k=1}^{d}{x_{n-1+k}^{m_{k}}}\right)x_{n+d}^{m_{d+1}-1}\right]\iv \in H_n$$
To get the result for $m_0\ge 1$ it is possible to increase the exponents of $x_0$ and $x_{n-1}$ simultaneously by repeatedly adding $\1$ on the right and multiplying by $\prod_{k=0}^{n-2}{x_k}$. Thus, we have that
$$\prod_{k=0}^{d+1}{x_{k}^{m_{k}}}\prod_{k=1}^{n-2}{x_{d+1+k}}
\left[\left(\prod_{k=0}^{d}{x_{n-1+k}^{m_{k}}}\right)x_{n+d}^{m_{d+1}-1}\right]\iv \in H_n$$
as required.
If $i\neq 0$ then by Lemma \ref{lm1.5}, adding the trivial diagram $i$ times on the left, gives the result.
\end{proof}

\begin{lemma}\label{lm2n}
Let $n\ge 2$. Then, the subgroup $\overrightarrow{F}_{\!\!n-1}$ coincides with the Thompson subalgebra $H_n$.
\end{lemma}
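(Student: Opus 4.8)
The plan is to prove the two inclusions $H_n\subseteq \overrightarrow{F}_{n-1}$ and $\overrightarrow{F}_{n-1}\subseteq H_n$ separately, mirroring the proof of Lemma \ref{lm2} (the case $n=3$) but feeding in the building blocks produced by Lemma \ref{technic}.

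For $H_n\subseteq \overrightarrow{F}_{n-1}$ it suffices, since $H_n$ is by definition the smallest subgroup containing $x_0\cdots x_{n-2}$ and closed under $\oplus$, to check that this single generator lies in $\overrightarrow{F}_{n-1}$ and that $\overrightarrow{F}_{n-1}$ is closed under $\oplus$. For the generator I would use the digit-sum description of $(n-1)$-goodness: by Lemma \ref{digits} together with its horizontal-reflection analogue (as used in Corollary \ref{bipartite}), a reduced diagram is $(n-1)$-good if and only if at every break point the digit sum of its preimage and the digit sum of its image agree modulo $n-1$; a direct computation of the action of $x_0\cdots x_{n-2}$ on binary expansions, generalizing the table for $x_0x_1$ in the proof of Theorem 2, then shows that this generator preserves digit sums modulo $n-1$. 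For closure under $\oplus$ I would argue directly on path lengths in $\Delta_1\oplus\Delta_2=\pi\circ(\Delta_1+\Delta_2)\circ\pi\iv$: every inner vertex of the left summand keeps its top and bottom path lengths, while for every inner vertex of the right summand both the top path and the bottom path from $\iota$ acquire exactly one extra edge. Hence in all cases the difference of the two lengths is unchanged modulo $n-1$, so $(n-1)$-goodness is inherited, and minimality of $H_n$ gives the inclusion.

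For $\overrightarrow{F}_{n-1}\subseteq H_n$ I would induct on the number of cells of a reduced $(n-1)$-good diagram $\Delta$, enumerating its vertices $0,1,\dots,s$ with $\iota=0$. If the only neighbour of $0$ in $T(\Delta)$ is the vertex $1$, then $\Delta=\1\oplus\Delta'$ with $\Delta'$ again reduced and $(n-1)$-good (removing the outer caret drops both path lengths from $\iota$ by one, so their difference is preserved); $\Delta'$ has strictly fewer cells, so $\Delta'\in H_n$ by induction, and since $\1\in H_n$ and $H_n$ is closed under $\oplus$ we conclude $\Delta=\1\oplus\Delta'\in H_n$. Otherwise $0$ has a neighbour $j>1$, and after possibly replacing $\Delta$ by $\Delta\iv$ (both $H_n$ and $\overrightarrow{F}_{n-1}$ are inverse-closed, and inversion interchanges upper and lower edges while preserving $(n-1)$-goodness) we may assume this is an upper edge, so the positive part of the normal form of $\Delta$ begins with $x_0$. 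The key step is to show that this positive part in fact begins with the word $\prod_{k=0}^{d}x_k^{m_k}\prod_{k=1}^{n-2}x_{d+k}$ occurring in a Lemma \ref{technic} building block, for suitable $d\ge0$ and $m_0,\dots,m_d\ge1$ with $m_d\ge2$ when $d>0$. Granting this, I left-multiply $\Delta$ by the inverse of that building block: the result is a product of two members of $\overrightarrow{F}_{n-1}$, hence again in $\overrightarrow{F}_{n-1}$, and its positive part has the above prefix (namely $\sum_{k}m_k+(n-2)$ generators) replaced by the word $(\prod_{k=0}^{d-1}x_{n-1+k}^{m_k})x_{n-1+d}^{m_d-1}$, which has exactly $n-1$ fewer generators; thus it has strictly fewer cells and lies in $H_n$ by induction. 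Since the building block itself is in $H_n$ by Lemma \ref{technic}, so is $\Delta$.

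The main obstacle is precisely this structural claim in the non-trivial case: that reducedness together with $(n-1)$-goodness forces the positive normal form to open with a block of repeated consecutive generators $x_0,\dots,x_d$ followed by a run of exactly $n-2$ single consecutive generators $x_{d+1},\dots,x_{d+n-2}$. To establish it I would label each vertex of $T(\Delta)$ by its top-path length modulo $n-1$, an element of $\Z/(n-1)\Z$ that is well defined because $\Delta$ is $(n-1)$-good, and then analyse the cells along the leftmost descending positive path exactly as in Lemma \ref{lm2}. Reducedness forbids a cell whose top edge would join two vertices carrying the ``forbidden'' pair of labels, which is the generalization of the $0,2$ obstruction in the case $n=3$; this simultaneously forces the consecutiveness of the indices and the length $n-2$ of the terminal single-generator run, and also guarantees that after dividing off the building block the normal form genuinely shortens, so that the induction on the number of cells is well founded.
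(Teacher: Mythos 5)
Your overall architecture is the same as the paper's: the inclusion $H_n\subseteq\overrightarrow{F}_{\!\!n-1}$ handled directly (the paper dismisses it as clear; your $\oplus$-computation and digit-sum check are fine), and for the reverse inclusion an induction in which one either strips a leading $\1\oplus{}$ or divides off a Lemma \ref{technic} building block. The genuine gap is in what you yourself call the main obstacle: the claim that the positive normal form of a reduced $(n-1)$-good diagram with a long upper edge at $0$ opens with $\prod_{k=0}^{d}x_k^{m_k}\prod_{k=1}^{n-2}x_{d+k}$. Your proposed justification --- ``reducedness forbids a cell whose top edge would join two vertices carrying the forbidden pair of labels, generalizing the $0,2$ obstruction'' --- is not an argument that works for $n\ge 4$, because for general $n$ there is no single locally forbidden configuration. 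What the paper proves instead (as an unlabeled lemma inside the proof of Lemma \ref{lm2n}) is: if $r$ is the leftmost vertex receiving an edge $(\ell,r)$ of $T(\Delta)$ with $\ell<r-1$, and $\ell$ is minimal for this $r$, then $r-\ell\ge n$. This needs three interacting ingredients: (i) minimality of $r$ forces every vertex $i<r$ to have all its incoming edges coming from $i-1$, so the part of $\Delta^+$ under $(\ell,r)$ is a left vine whose inner vertices all carry arcs to $r$, and the bottom path into $r$ is $(0,1),\dots,(\ell'-1,\ell'),(\ell',r)$, of length $\ell'+1$, where $(\ell',r)$ is the lower edge of $T(\Delta)$ at $r$; (ii) reducedness excludes exactly one case, $\ell'=\ell$ (the upper and lower vines would create a dipole); (iii) the congruence $\ell'+1\equiv\ell+1\pmod{n-1}$ together with $\ell'>\ell$ forces $\ell'\ge\ell+n-1$, hence $r-\ell\ge n$. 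For $n=3$ there is only one intermediate value of $\ell'$, which is why Lemma \ref{lm2} reads like a single reducedness obstruction; for $n\ge4$ the cases $\ell<\ell'<\ell+n-1$ are not excluded by reducedness at all but by the counting in (iii), and (iii) is meaningless without the exact description of the bottom path supplied by (i). None of (i)--(iii) appears in your sketch. Likewise, converting the geometric conclusion (a vine with at least $n-1$ inner vertices and arcs into $r$) into the asserted normal-form prefix --- consecutive indices, the condition $m_d\ge2$, and the run of $n-2$ single generators --- requires the reading procedure of Lemma \ref{lm1}; you state this is ``forced'' without an argument.

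A smaller flaw: your induction is on the number of cells, with the decrease justified by ``$n-1$ fewer generators, thus fewer cells,'' but the number of cells of a reduced diagram is not the number of letters in its normal form (cells whose terminal vertex is $\tau(\Delta)$ contribute no letter in Lemma \ref{lm1}). Induct instead, as the paper does, on the length of the normal form: after left multiplication by the inverse of the building block, the element is represented by a word shorter by exactly $n-1$ letters, and passing to the normal form cannot increase length.
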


\begin{proof}
Clearly, $H_n$ is inside $\overrightarrow{F}_{\!\!n-1}$.
Let $\Delta$ be a reduced diagram in $\arr{F}_{\!\!n-1}$. We enumerate the vertices of $\Delta$ from left to right: $0,1,...,s$ so that $\iota(\Delta)=0, \tau(\Delta)=s$. We shall need the following lemma.

\begin{lemma}
%Let $n\ge 2$. Let $\D$ be a reduced diagram in $\arr{F_n}$.
Let $r$ be the left-most vertex such that there exists $\ell<r-1$ such that
$\ell$ and $r$ are connected by an edge in $T(\Delta)$. Let $\ell$ be the left-most vertex such that $(\ell,r)$ is an upper or lower  edge in $T(\D)$. Then $r-\ell\ge n$. That is, there are at least $n-1$ inner vertices of $\D$ between the vertices $\ell$ and $r$.
\end{lemma}

\begin{proof}
We assume that the edge $(\ell,r)$ is an upper edge in $T(\D)$. Otherwise, we look at $\D\iv $ instead.  Note that a priori, there might also be a lower edge in $T(\D)$ connecting the vertices $\ell$ and $r$. Let $\D_1$ be the subdiagram of $\D^+$ bounded from above by the edge $(l,r)$ and from below by (part of) the path $\bott(\D^+)$. Since every inner vertex $i<r$ has no incoming edges in $\D$ other than $(i-1,i)$, every inner vertex of $\D_1$ is connected by an edge in $\D_1$ to the terminal vertex $r$.
It follows that the vertices $\ell$ and $r$ are not connected by a lower edge in $T(\D)$. Otherwise the same argument for the subdiagram $\D_2$, bounded from above by $\topp(\D^-)=\bott(\D^+)$ and from below by the lower edge from $\ell$ to $r$, would show that the diagram $\D_2$ is the inverse of $\D_1$, by contradiction to the diagram $\D$ being reduced.
%Note that this property determines the diagram $\D_1$.
%It follows that $\ell$ and $r$ are not connected by a lower edge in $T(\D)$. Otherwise, if $\D_2$ is the subdiagram of $\D^-$ bounded from above by (part of) the path $\topp(\D^-)=\bott(\D^+)$ and from below by the said lower edge, then $\D_2$ is the inverse of $\D_1$ and the diagram $\D$ is not reduced.
%If $\ell$ and $r$ are connected by a lower edge as well, then every inner vertex between them is connected by an edge in $\D^-$
%to the vertex $r$. Thus, the diagram

Since the length of the top path from $\ini$ to $r$ is $\ell+1$, the length of the bottom path from $\ini$ to $r$ is equal to $\ell+1$ modulo $n-1$.
From the minimality of $r$, there exists some $\ell'<r$ such that the bottom path from $\ini$ to $r$ is composed of the lower edges $(0,1),(1,2)\dots,(\ell'-1,\ell')$ and $(\ell',r)$. Therefore, its length $\ell'+1\equiv \ell+1 (\mod n-1)$. From the minimality of $\ell$ and the absence of a lower edge from $\ell$ to $r$, it follows that $\ell'>\ell$.
Therefore, $r-\ell>\ell'-\ell\equiv 0(\mod n-1)$ implies that $r-l>n-1$.
\end{proof}

Now we can finish the proof of Lemma \ref{lm2n}.

Consider the subdiagram $\D_1$ of the diagram $\D^+$, bounded from above by the upper edge $(\ell,r)$ and from below by the bottom path $\mathbf{bot}(\D^+)$.
The diagram $\D_1$ has at least $n-1$ inner vertices, the first $n-2$ of which are connected by arcs (i.e., edges which do not lie on $\mathbf{bot}(\D^+)$) to the terminal vertex $r$.

Let $x_i$ be the leading term of the positive part of the normal form of $\D$ (since $(\ell,r)$ is an upper edge, such $x_i$ exists).
Let $x_i^{m_0}x_{i+1}^{m_1}\cdots x_{i+p}^{m_p}$ be the longest sequence of positive powers of consecutive letters $x_j$ which forms a prefix of the normal form of $\D$.

Each positive letter $x_j$ in the normal form of $\D$ has a corresponding edge in $\D^+$. Namely, the top edge of the cell ``labeled by $x_j$'' in the algorithm described in Lemma \ref{lm1}.
From Lemma \ref{lm1} it follows that if for all $j=0,\dots,p$ the exponent $m_j=1$, then the edge corresponding to the first letter in the sequence (i.e., to $x_i$) is in fact the edge $(\ell,r)$. In this case, the $n-2$ mentioned arcs from vertices below the edge ($\ell,r)$ to $r$ show that the normal form of $\D$ starts with $\prod_{k=0}^{n-2}{x_{i+k}}$ which belongs to $H_n$. Dividing the normal form of $\D$ from the left by $\prod_{k=0}^{n-2}{x_{i+k}}$ gives an element of $\arr{F}_{\!\!n-1}$ with a shorter normal form and we are done by induction.

Thus we can assume that there exists $d=\max\{j=0,\dots,p\mid m_d\ge 2\}$. Again, from Lemma \ref{lm1} it follows that the edge corresponding to the last letter of the power $x_d^{m_d}$ is the edge $(\ell,r)$.
The arcs below $(\ell,r)$ show that the prefix $x_i^{m_0}x_{i+1}^{m_1}\cdots x_{i+d}^{m_d}$ of the normal form of $\D$ is followed by at least $n-2$ letters forming the product $\prod_{k=1}^{n-2}{x_{i+d+k}}$.
By Lemma \ref{technic}, %multiplying be an element from $H_n$
 it is possible to replace $\prod_{k=0}^{d}{x_{i+k}^{m_k}}\prod_{k=1}^{n-2}{x_{i+d+k}}$
by  $\left(\prod_{k=0}^{d-1}{x_{i+n-1+k}^{m_k}}\right)x_{i+n-1+d}^{m_d-1}$ and the resulting element would still be in $\arr{F}_{\!\!n-1}$. Since its normal form is shorter than that of $\D$ we are done by induction.
\end{proof}

\begin{lemma}\label{l:58}
For all $n\ge 2$, the Thompson subalgebra $H_n$ is generated as a group by the set $A=\{x_j\cdots x_{j+n-2}\mid j\ge 0\}$.
\end{lemma}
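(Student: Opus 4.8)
The plan is to prove the two inclusions $\la A\ra\subseteq H_n$ and $H_n\subseteq\la A\ra$ separately, the second being the substance. Throughout write $a_j=x_j\cdots x_{j+n-2}$ for the elements of $A$. The inclusion $\la A\ra\subseteq H_n$ is immediate: by Lemma \ref{lm1.5} we have $a_{j+1}=\1\oplus a_j$, so each $a_j$ is obtained from $a_0=x_0\cdots x_{n-2}\in H_n$ by adding $\1$ on the left finitely many times; since $H_n$ contains $\1$ and is closed under $\oplus$, every $a_j$ lies in $H_n$.

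For $H_n\subseteq\la A\ra$ I would invoke Lemma \ref{lm2n}, which identifies $H_n$ with $\overrightarrow{F}_{n-1}$, and track the reduction carried out in its proof. That proof shows that an arbitrary reduced $\Delta\in\overrightarrow{F}_{n-1}$ is, up to the operation $\Delta'\mapsto\1\oplus\Delta'$, obtained by left-multiplying a strictly shorter element of $\overrightarrow{F}_{n-1}$ either by some $a_i\in A$ (the case in which the relevant prefix has all exponents equal to $1$) or by the element furnished by Lemma \ref{technic} (the case of a nontrivial exponent). By Lemma \ref{lm0} the map $\1\oplus-$ is an endomorphism of $F$ sending $a_j\mapsto a_{j+1}$, so $\la A\ra$ is invariant under it. Hence, arguing by induction on the length of the normal form exactly as in the passage from the proof of Lemma \ref{lm2n} to Lemma \ref{lm3'}, the inclusion $H_n\subseteq\la A\ra$ reduces to showing that every element produced by Lemma \ref{technic} lies in $\la A\ra$.

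Using the $\1\oplus-$ invariance of $\la A\ra$ once more, it suffices to treat the case $i=0$, where I would establish the explicit identity
\[
\Big(\prod_{k=0}^{d} x_{k}^{m_k}\Big)\Big(\prod_{k=1}^{n-2} x_{d+k}\Big)\Big[\Big(\prod_{k=0}^{d-1} x_{n-1+k}^{m_k}\Big)x_{n-1+d}^{m_d-1}\Big]\iv = \Big(\prod_{k=0}^{d} a_{k}^{m_k}\Big)\Big(\prod_{k=1}^{n-2} a_{d+k}\Big)\Big[\Big(\prod_{k=0}^{d-1} a_{n-1+k}^{m_k}\Big)a_{n-1+d}^{m_d-1}\Big]\iv,
\]
whose right-hand side is manifestly a word in $A^{\pm1}$. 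For $d=0$ this is precisely the identity $x_0^{m_0}x_1x_2^{-(m_0-1)}=(x_0x_1)^{m_0}(x_2x_3)^{-(m_0-1)}$ verified in Lemma \ref{lm3}, now stretched over blocks of length $n-1$. The identity is checked by expanding the right-hand side and pushing it to normal form with the defining relations $x_ix_j=x_jx_{i+1}$ (valid for $i>j$): multiplying out the blocks $a_k=x_k\cdots x_{k+n-2}$ telescopes, each negative block $a_{n-1+k}^{-1}$ cancelling the surplus letters created when the positive blocks are normal-ordered, and the surviving word is exactly the left-hand side supplied by Lemma \ref{technic}.

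The main obstacle is this last normal-form computation: keeping track, for general $d$ and general exponent vector $(m_0,\dots,m_d)$, of how the positive product $\prod_k a_k^{m_k}\cdot\prod_k a_{d+k}$ reorders under $x_ix_j=x_jx_{i+1}$, and verifying that the trailing inverse block annihilates precisely the letters with indices $\ge n-1$. Conceptually this identity says that the right-hand word is the image, under $y_j\mapsto a_j$, of the corresponding normal form in $F_n$; but since the isomorphism $H_n\cong F_n$ is not yet available at this stage, I would carry out the cancellation directly in $F$, organizing it by induction on $d$ so that each inductive step mirrors the single addition of $\1$ on the right used in the inductive construction inside the proof of Lemma \ref{technic}.
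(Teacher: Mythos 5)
Your outline (reduce to the elements of Lemma \ref{technic}, using the $\1\oplus-$ invariance of $\la A\ra$) is legitimate, but the explicit identity on which the whole argument rests is false, and it already fails in the smallest cases. Take $n=3$, $d=0$, and write $a_j=x_jx_{j+1}$. For $m_0=1$ your identity asserts $x_0x_1=(x_0x_1)(x_1x_2)$, i.e.\ $x_1x_2=\1$; for $m_0=2$ it asserts $x_0^2x_1x_2\iv=(x_0x_1)^2(x_1x_2)(x_2x_3)\iv$, whereas the computation you yourself cite from Lemma \ref{lm3} gives $x_0^2x_1x_2\iv=(x_0x_1)^2(x_2x_3)\iv$, so again $x_1x_2=\1$ would follow. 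The source of the error is the claim that for $d=0$ your formula ``is precisely'' the identity of Lemma \ref{lm3}: it is not, because your right-hand side carries the extra middle factor $\prod_{k=1}^{n-2}a_{d+k}$, which has no counterpart there; the correct $A$-expression absorbs the block $\prod_{k=1}^{n-2}x_{d+k}$ rather than transliterating it letter by letter. Moreover the obvious repair --- deleting that middle factor from the right-hand side --- is also false once $d\ge 1$: for $n=3$, $d=1$, $(m_0,m_1)=(1,2)$, the element of Lemma \ref{technic} is $x_0x_1^2x_2(x_2x_3)\iv$, which equals $a_0a_1a_2\iv$, and not $a_0a_1^2(a_2a_3)\iv$. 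So the $A$-words you need do not arise from any uniform substitution $x_k\mapsto a_k$, and the ``telescoping'' cancellation described in your last paragraph cannot go through as stated. This is a genuine gap, not a deferred routine verification.

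For contrast, the paper's proof avoids Lemma \ref{lm2n} and Lemma \ref{technic} altogether, exploiting the \emph{minimality} in the definition of $H_n$: letting $G=\la A\ra$, it suffices to prove that $G$ is closed under $\oplus$ (then $H_n\subseteq G$ by minimality, and $G\subseteq H_n$ is your easy first paragraph). By Lemma \ref{lm0}, $a\oplus b=(a\oplus\1)(\1\oplus b)$ and both factors split over the generators, so closure reduces to $\1\oplus a_j=a_{j+1}\in G$ (Lemma \ref{lm1.5}) together with the single computation $a_j\oplus\1\in G$. By Lemma \ref{lm1}, $a_j\oplus\1=\prod_{k=0}^{j}x_k\,\prod_{k=0}^{n-2}x_{j+k}\,\bigl(\prod_{k=0}^{j+n-1}x_k\bigr)\iv$, and a short commutation using $x_\ell\iv x_r=x_rx_{\ell+1}\iv$ ($\ell>r$) rewrites this as a product of three blocks of consecutive generators whose lengths are divisible by $n-1$, each of which is visibly a product of elements of $A$. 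If you wish to keep your architecture, the cleanest repair is to prove exactly this closure property ($g\in\la A\ra$ implies $g\oplus\1\in\la A\ra$ and $\1\oplus g\in\la A\ra$): the proof of Lemma \ref{technic} constructs its elements from $x_0\cdots x_{n-2}$ using only these two operations and multiplication by $(x_0\cdots x_{n-2})^{\pm1}$, so closure immediately puts them in $\la A\ra$. But at that point you have reproduced the paper's argument, and the detour through Lemma \ref{lm2n} becomes unnecessary.
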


\begin{proof}
Let $G$ be the subgroup of $F$ generated by $A$. It suffices to prove that $G$ is closed under sums. Let $a$ and $b$ be elements of $G$. In particular, $a=y_1\cdots y_t$ and $b=z_1\cdots z_s$ where $y_1,\dots, y_t$ and $z_1,\dots, z_s$ are elements in $A^{\pm 1}$.
By Lemma \ref{lm0}, $a\oplus b=(y_1\oplus \1)\cdots (y_t\oplus \1)(\1\oplus z_1)\cdots(1\oplus z_s)$. Since for any diagram $z$, we have $(1\oplus z\iv)=(1\oplus z)\iv$ and $A$ is closed under addition of $\1$ from the left, $(\1\oplus z_m)\in G$ for all $m=1,\dots,s$.
Thus it suffices to prove that for all $j\ge 0$, the element $x_j\cdots x_{j+n-2}\oplus \1\in G$.

From Lemma \ref{lm1} it follows that $$\left(\prod_{k=0}^{n-2}x_{j+k}\right)\oplus \1=\prod_{k=0}^{j}x_k\prod_{k=0}^{n-2}x_{j+k}\left(\prod_{k=0}^{j+n-1}x_k\right)\iv.$$
Let $r\in\{0,\dots,n-2\}$ be the residue of $j$ modulo $n-1$. Inserting $\prod_{k=j+1}^{j+n-r-2}x_k$ and its inverse to the right side of the above equation we get
that
$$\left(\prod_{k=0}^{n-2}x_{j+k}\right)\oplus \1=\prod_{k=0}^{j}x_k  \left(\prod_{k=j+1}^{j+n-r-2}x_k\right) \left(\prod_{k=j+1}^{j+n-r-2}x_k\right)\iv \prod_{k=0}^{n-2}x_{j+k}\left(\prod_{k=0}^{j+n-1}x_k\right)\iv$$
Since in Thompson group $F$ for all $\ell>r$ we have $x_{\ell}^{-1}x_r=x_rx_{\ell+1}^{-1}$, it is possible to replace
$\left(\prod_{k=j+1}^{j+n-r-2}x_k\right)\iv \prod_{k=0}^{n-2}x_{j+k}$ by $\prod_{k=0}^{n-2}x_{j+k} \left(\prod_{k=j+1}^{j+n-r-2}x_{k+n-1}\right)\iv$. Thus,
$$\left(\prod_{k=0}^{n-2}x_{j+k}\right)\oplus \1=\prod_{k=0}^{j}x_k  \prod_{k=j+1}^{j+n-r-2}x_k  \prod_{k=0}^{n-2}x_{j+k} \left(\prod_{k=j+1}^{j+n-r-2}x_{k+n-1}\right)\iv    \left(\prod_{k=0}^{j+n-1}x_k\right)\iv$$
Merging adjacent products, we get that
$$\left(\prod_{k=0}^{n-2}x_{j+k}\right)\oplus \1=\prod_{k=0}^{j+n-r-2}x_k \prod_{k=0}^{n-2}x_{j+k}    \left(\prod_{k=0}^{j+2n-r-3}x_k\right)\iv.$$
Since the length of each of the products $\prod_{k=0}^{j+n-r-2}x_k$, $\prod_{k=0}^{n-2}x_{j+k}$ and $\prod_{k=0}^{j+2n-r-3}x_k$ is divisible by $n-1$, it is possible to present each of them as a product of elements of $A$. The result clearly follows.

%Then, $\left(\prod_{k=0}^{n-2}x_{j+k}\right)\oplus \1$ is equal to
%$$\prod_{k=0}^{j}x_k  \left(\prod_{k=j+1}^{j+n-r-2}x_k\right) \left(\prod_{k=j+1}^{j+n-r-2}x_k\right)\iv \prod_{k=0}^{n-2}x_{j+k}\left(\prod_{k=0}^{j+n-1}x_k\right)\iv$$
%Since in Thompson group $F$ for all $\ell>r$ we have $x_{\ell}^{-1}x_r=x_rx_{\ell+1}^{-1}$, we have the equality
%$\left(\prod_{k=j+1}^{j+n-r-2}x_k\right)\iv \prod_{k=0}^{n-2}x_{j+k}=\prod_{k=0}^{n-2}x_{j+k} \left(\prod_{k=j+1}^{j+n-r-2}x_{k+n-1}\right)\iv$. Thus,
%$$\left(\prod_{k=0}^{n-2}x_{j+k}\right)\oplus \1=\prod_{k=0}^{j}x_k  \left(\prod_{k=j+1}^{j+n-r-2}x_k\right)  \prod_{k=0}^{n-2}x_{j+k} \left(\prod_{k=j+1}^{j+n-r-2}x_{k+n-1}\right)\iv    \left(\prod_{k=0}^{j+n-1}x_k\right)\iv$$
%Merging adjacent products, we get the element
%$$\prod_{k=0}^{j+n-r-2}x_k \prod_{k=0}^{n-2}x_{j+k}    \left(\prod_{k=0}^{j+2n-r-3}x_k\right)\iv.$$
%Since the length of each of the products $\prod_{k=0}^{j+n-r-2}x_k$, $\prod_{k=0}^{n-2}x_{j+k}$ and $\prod_{k=0}^{j+2n-r-3}x_k$ is divisible by $n-1$, it is possible to present each of them as a product of elements of $A$. The result clearly follows.

%Since $\prod_{k=0}^{n-2}x_k\in H_n$, it suffices to prove that
%$$\left(\prod_{k=0}^{j-1}x_k\right)x_j^2\prod_{k=1}^{n-2}x_{j+k}\left(\prod_{k=0}^{j}x_{n-1+k}\right)\iv\in H_n.$$
%This is precisely the element from Lemma \ref{technic} for $i=0$ and $d=j$ where $m_0,\dots,m_{d-1}=1$ and $m_d=2$.
\end{proof}

\begin{cor}\label{c:59}
For all $n\ge 2$, the Thompson subalgebra $H_n$ is generated as a group by the set $\{x_j\dots x_{j+n-2}\mid j=0,\dots,n-1\}$.
\end{cor}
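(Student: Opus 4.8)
The plan is to start from Lemma \ref{l:58}, which already presents $H_n$ as the group generated by the infinite set $A=\{a_j\mid j\ge 0\}$, where I abbreviate $a_j:=x_j\cdots x_{j+n-2}$. It therefore suffices to show that every $a_j$ with $j\ge n$ already lies in the subgroup generated by the finite subset $\{a_0,\dots,a_{n-1}\}$. I would do this by expressing each such $a_j$ as a conjugate of a generator of strictly smaller index and then inducting, exactly mirroring the argument of Lemma \ref{lm3} in the case $n=3$.

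The key identity I would establish is that conjugation by $c:=a_0=x_0x_1\cdots x_{n-2}$ shifts the block $a_i$ up by exactly $n-1$:
\[
 a_i^{\,c}=c^{-1}a_i c=a_{i+n-1}\qquad (i\ge 1).
\]
This is the general-$n$ analogue of the identity $x_jx_{j+1}=(x_{j-2}x_{j-1})^{x_0x_1}$ $(j>2)$ used in Lemma \ref{lm3}, where $n=3$ and $c=x_0x_1$. To prove it I would conjugate $a_i$ successively by $x_0,x_1,\dots,x_{n-2}$, using only the defining relation $x_k^{-1}x_\ell x_k=x_{\ell+1}$ of $F$, which holds whenever $k<\ell$. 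Conjugation by $x_0$ sends every letter $x_\ell$ with $\ell\ge 1$ to $x_{\ell+1}$, so $a_i^{\,x_0}=a_{i+1}$ as soon as the smallest index $i$ satisfies $i\ge 1$; conjugation by $x_1$ then carries $a_{i+1}$ to $a_{i+2}$ provided its smallest index $i+1\ge 2$, i.e.\ again $i\ge 1$; and inductively the $k$-th conjugation (by $x_k$) turns $a_{i+k}$ into $a_{i+k+1}$ precisely because its smallest index $i+k$ exceeds $k$. After all $n-1$ conjugations one arrives at $a_{i+n-1}$, and the single hypothesis $i\ge 1$ suffices at every stage.

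With the identity in hand the Corollary is immediate. For $j\ge n$ set $i=j-(n-1)\ge 1$; then $a_j=c^{-1}a_i c$ expresses $a_j$ through $c=a_0$ and the block $a_i$ of strictly smaller index. Iterating, each $a_j$ is reduced by repeated subtraction of $n-1$ to some $a_{j'}$ with $0\le j'\le n-1$, every conjugation being by the fixed element $c=a_0$ which itself belongs to the finite set. Hence $A\subseteq\langle a_0,\dots,a_{n-1}\rangle$, and combined with Lemma \ref{l:58} this yields $H_n=\langle x_j\cdots x_{j+n-2}\mid j=0,\dots,n-1\rangle$.

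The only delicate point, and the one I would write out carefully, is the index bookkeeping in the conjugation identity: one must verify that the single condition $i\ge 1$ forces $i+k>k$ at each of the $n-1$ stages, so that no letter is left fixed and the whole block shifts uniformly by one. Indeed $i=0$ genuinely fails, since conjugating $a_0$ by $x_0$ fixes its leading letter $x_0$; this is exactly why $a_0$ must be retained as a base generator rather than produced by the recursion. Everything else is a routine application of the $F$-relations.
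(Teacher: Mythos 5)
Your proposal is correct and takes essentially the same approach as the paper: the paper's entire proof is the one-line identity $x_j\cdots x_{j+n-2}=(x_{j-n+1}\cdots x_{j-1})^{x_0\cdots x_{n-2}}$ for $j\ge n$, which is exactly your key identity $a_i^{\,c}=a_{i+n-1}$ (with $i=j-n+1\ge 1$), combined with Lemma \ref{l:58}. You simply write out the verification via the relations $x_\ell^{x_k}=x_{\ell+1}$ for $k<\ell$ and the descending induction that the paper leaves implicit.
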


\begin{proof}
For every $j\ge n$, the element $x_j\dots x_{j+n-2}$ is equal to $(x_{j-n+1}\cdots x_{j-1})^{x_0\cdots x_{n-2}}$.
\end{proof}

\begin{lemma}\label{lm4n}
For all $n\ge 2$, the subgroup $\overrightarrow{F}_{\!\!n-1}$ is isomorphic to $F_n$.
\end{lemma}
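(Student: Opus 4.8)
The plan is to follow the strategy of Lemma \ref{lm4} (the case $n=3$): realize $\overrightarrow{F}_{\!\!n-1}$ as a homomorphic image of $F_n$, and then upgrade the surjection to an isomorphism using the fact that every proper quotient of $F_n$ is abelian. Recall from \cite{GS} that $F_n$ has the presentation $\langle y_i,\ i\ge 0 \mid y_j\iv y_i y_j = y_{i+n-1},\ i>j\rangle$ (for $n=3$ this specializes to the presentation of $F_3$ invoked in Lemma \ref{lm4}). By Lemma \ref{l:58} the subalgebra $H_n=\overrightarrow{F}_{\!\!n-1}$ is generated by the elements $g_i:=x_i\cdots x_{i+n-2}$, $i\ge 0$. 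Hence, once the $g_i$ are shown to satisfy the defining relations, the assignment $y_i\mapsto g_i$ will define a surjective homomorphism $\varphi\colon F_n\twoheadrightarrow \overrightarrow{F}_{\!\!n-1}$.

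To verify the relations I would first reduce to the case $j=0$. Let $\sigma=(\1\oplus -)\colon F\to F$; by Lemma \ref{lm1.5} we have $\sigma(g_i)=g_{i+1}$, and by Lemma \ref{lm0} $\sigma$ is a homomorphism. Consequently, applying $\sigma^{\,j}$ to an identity $g_0\iv g_i g_0=g_{i+n-1}$ (for $i\ge 1$) yields $g_j\iv g_{i+j}g_j=g_{i+j+n-1}$, which is exactly the relation $y_j\iv y_k y_j=y_{k+n-1}$ for an arbitrary pair $k>j\ge 0$. Thus it suffices to establish the relations $g_0\iv g_i g_0=g_{i+n-1}$ for $i\ge1$, and these are precisely the content of Corollary \ref{c:59} (read with its index set equal to $i+n-1$), since $(x_i\cdots x_{i+n-2})^{x_0\cdots x_{n-2}}=g_0\iv g_i g_0$ while $x_{i+n-1}\cdots x_{i+2n-3}=g_{i+n-1}$. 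This produces the surjection $\varphi$.

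It then remains to see that $\varphi$ is injective. By \cite[Theorem 4.13]{B} every proper homomorphic image of $F_n$ is abelian, so it is enough to prove that $\overrightarrow{F}_{\!\!n-1}$ is non-abelian: in that case $\ker\varphi$ cannot be a nontrivial normal subgroup (otherwise $\overrightarrow{F}_{\!\!n-1}\cong F_n/\ker\varphi$ would be a non-abelian proper quotient of $F_n$), so $\ker\varphi=1$ and $\varphi$ is an isomorphism. Non-commutativity is witnessed by $g_0=x_0\cdots x_{n-2}$ and $g_1=x_1\cdots x_{n-1}$; a direct computation of normal forms, using the relations $x_ix_j=x_jx_{i+1}$ for $i>j$ exactly as in the proof of Lemma \ref{lm4}, gives $g_0g_1\ne g_1g_0$.

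The step I expect to demand the most care is bookkeeping rather than ideas: matching the index conventions of Corollary \ref{c:59} with those of the presentation of $F_n$, and confirming that the shift by $\sigma$ genuinely recovers the \emph{entire} relation set (all pairs $k>j$) and not merely a subset. Once the index arithmetic is pinned down, each of the remaining steps is formal and parallels the $n=3$ argument.
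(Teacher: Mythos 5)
Your proposal is correct and follows essentially the same route as the paper: both construct a surjection from $F_n$ onto $\overrightarrow{F}_{\!\!n-1}$ sending the generators of $F_n$ to the elements $x_j\cdots x_{j+n-2}$, and both deduce injectivity from the fact that every proper homomorphic image of $F_n$ is abelian while $x_0\cdots x_{n-2}$ and $x_1\cdots x_{n-1}$ do not commute. The only (cosmetic) difference is that the paper invokes the finite presentation of $F_n$ from \cite{GS} and asserts the relations hold, whereas you use the infinite presentation and verify its relations explicitly via the shift endomorphism $g\mapsto \1\oplus g$ together with the conjugation identity from the proof of Corollary \ref{c:59}.
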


\proof  The elements $x_j\cdots x_{j+n-2}$, for $j=0,\dots,n-1$ satisfy the defining
relations of $F_n$ (see \cite[page 54]{GS}). Since all proper homomorphic images of $F_n$ are Abelian \cite[Theorem 4.13]{B} and  $x_j\cdots x_{j+n-2}$, for $j=0,1$ do not commute, we get the result.
\endproof

%\subsection{The action of $\protect\overrightarrow F_n$ on binary fractions}
%%is the subgroup of $F$ stabilizing the sum of digits of finite binary fractions modulo $n$.}

Finally, the proofs of the following theorems are almost identical to the proofs of Theorem \ref{thm:2} and
Corollary \ref{thm:3} so we leave them to the reader.

\begin{theorem}
Let $n\ge 1$. For each $i=0,\dots, n-1$, let $S_i$ be the set of all finite binary fractions from the unit interval $[0,1]$ with sums of digits equal to $i$ modulo $n$. Then, the subgroup $\overrightarrow{F_n}$ is the intersection of the stabilizers of $S_i$, $i=0,\dots,n-1$ under the natural action of $F$ on $[0,1]$.
\end{theorem}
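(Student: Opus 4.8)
The plan is to prove both inclusions by using Lemma~\ref{digits} to convert the arithmetic condition ``sum of binary digits modulo $n$'' into the combinatorial condition defining $n$-goodness. For a finite binary fraction $t$ write $s(t)$ for the sum of its binary digits. Since the sets $S_0,\dots,S_{n-1}$ partition the finite binary fractions according to the residue of $s(t)$ modulo $n$, and since every element of $F$ acts bijectively on the dyadic fractions, a reduced diagram $\D$ lies in $\bigcap_{i}\Stab(S_i)$ if and only if $s(\D(t))\equiv s(t)$ modulo $n$ for every finite binary fraction $t$. Recalling that $\overrightarrow{F_n}$ is exactly the set of reduced $n$-good diagrams (Definition~\ref{vecFn}), it therefore suffices to show that a reduced diagram is $n$-good precisely when it preserves the residue of $s$ modulo $n$. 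The case $n=1$ is trivial, since then $S_0$ is the set of all finite binary fractions and both sides equal $F$, so assume $n\ge 2$.

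For the inclusion $\bigcap_i\Stab(S_i)\subseteq\overrightarrow{F_n}$ I would argue as in Corollary~\ref{bipartite}. Let $\D$ be a reduced diagram preserving the residue of $s$ modulo $n$, and let $v$ be an inner vertex of $\D$ with corresponding break points $f^{+}$ and $f^{-}$ of the top and bottom subdivisions (Section~\ref{frac}), so that $\D(f^{+})=f^{-}$ and hence $s(f^{+})\equiv s(f^{-})$ modulo $n$. By Lemma~\ref{digits}, $s(f^{+})$ is the length of the top path from $\ini$ to $v$, and by the horizontal-reflection analogue stated immediately after Lemma~\ref{digits}, $s(f^{-})$ is the length of the bottom path from $\ini$ to $v$. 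Thus these two lengths agree modulo $n$ for every inner vertex, so $\D$ is $n$-good and belongs to $\overrightarrow{F_n}$.

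For the reverse inclusion $\overrightarrow{F_n}\subseteq\bigcap_i\Stab(S_i)$ I would upgrade the break-point statement to all dyadic fractions by a local-linearity argument. Fix a reduced $n$-good $\D$ and a finite binary fraction $t$, and let $[a,a+2^{-m})$ be the half-open segment of the top subdivision containing $t$, where $a=.d_1\cdots d_m$ is its left endpoint written with exactly $m$ digits. On this segment $\D$ is linear with slope $2^{m-m'}$ and maps it onto the bottom segment with left endpoint $a'=\D(a)=.d_1'\cdots d_{m'}'$ of length $2^{-m'}$. Writing $t=.d_1\cdots d_m\gamma$, this linear map fixes the tail $\gamma$ and only exchanges the prefixes, giving $\D(t)=.d_1'\cdots d_{m'}'\gamma$; hence
\[ s(\D(t))-s(t)=s(a')-s(a). \]
If $a=\ini$ both sides vanish; otherwise $a$ is an inner vertex, and Lemma~\ref{digits} together with its analogue and the $n$-goodness of $\D$ give $s(a')\equiv s(a)$ modulo $n$. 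Either way $s(\D(t))\equiv s(t)$ modulo $n$, as required.

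The main obstacle is this last bookkeeping step, namely the verification that $\D$ replaces only the prefix $d_1\cdots d_m$ coding the containing segment while leaving the tail $\gamma$ unchanged; everything else is a direct translation of the proof of Theorem~\ref{thm:2}. An alternative, closer to the original argument, would replace the reverse inclusion by the generator route: by Lemma~\ref{lm2n} the group $\overrightarrow{F_n}$ is the smallest subgroup of $F$ containing $x_0\cdots x_{n-1}$ and closed under $\oplus$, so it would be enough to check that $x_0\cdots x_{n-1}$ preserves the residue of $s$ modulo $n$ and that $\oplus$ preserves this property. The second point is immediate from the formula for $\oplus$ used in the proof of Theorem~\ref{thm:2}, while the first requires computing the explicit action of $x_0\cdots x_{n-1}$ on binary strings (generalizing the table given there for $x_0x_1$), which can be done by induction on $n$ via the identity $x_0\cdots x_{n-1}=x_0\bigl(\1\oplus(x_0\cdots x_{n-2})\bigr)$ and Lemma~\ref{lm1.5}. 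In the generator form this computation is the genuinely technical point.
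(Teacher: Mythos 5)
Your proof is correct, and for the harder inclusion it takes a genuinely different route from the paper. The paper does not write this proof out: it declares it ``almost identical'' to the proof of Theorem \ref{thm:2}, whose structure is as follows. The inclusion $\bigcap_i\Stab(S_i)\subseteq\overrightarrow{F_n}$ is exactly your first argument (Lemma \ref{digits}, its reflection, and the analogue of Corollary \ref{bipartite}); but the inclusion $\overrightarrow{F_n}\subseteq\bigcap_i\Stab(S_i)$ is done there by the generator route that you only sketch as an alternative: by Lemma \ref{lm2n}, $\overrightarrow{F_n}$ is the smallest subgroup containing $x_0\cdots x_{n-1}$ and closed under $\oplus$, one verifies the property for the single generator and checks that $\oplus$ preserves it. Your primary argument replaces this algebraic reduction by a direct pointwise computation: on the half-open segment $[a,a+2^{-m})$ of the top subdivision containing $t$, the map $\D$ replaces only the prefix encoding $a$ by the prefix encoding $a'=\D(a)$, so $s(\D(t))-s(t)=s(a')-s(a)\equiv 0 \pmod n$ by Lemma \ref{digits} and $n$-goodness. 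The bookkeeping step you flag as the main obstacle is sound: segments of a binary subdivision are standard dyadic intervals $[k/2^m,(k+1)/2^m]$, and the affine map $u\mapsto a'+2^{m-m'}(u-a)$ carries $.d_1\cdots d_m\gamma$ to $.d_1'\cdots d_{m'}'\gamma$ with no carrying, since $0.\gamma<1$. What your route buys: the theorem becomes logically independent of the technical Lemmas \ref{technic} and \ref{lm2n} (only Definition \ref{vecFn} and Lemma \ref{digits} are used), it is uniform in $n$ with no generator computation, and it yields the stronger pointwise statement that every reduced $n$-good diagram preserves digit sums modulo $n$ at every dyadic point. What the paper's route buys: it reuses structure already established, and for $n=2$ the generator check is a four-line table; for general $n$ it needs the inductive computation you defer, which does go through: writing $g_n=x_0\cdots x_{n-1}=x_0\bigl(\1\oplus g_{n-1}\bigr)$ and recalling that composition is left to right, one gets $g_n(.1\alpha)=.1\,g_{n-1}(1\alpha)$, so by induction $g_n$ prepends exactly $n$ extra $1$'s to fractions starting with $1$, while fractions starting with $0$ have their digit sum unchanged; hence every digit-sum change is $0$ or $n$.
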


\begin{theorem}
For all $n\ge 1$, the subgroup $\overrightarrow{F}_{\!\!n}$ coincides with its commensurator in $F$, hence the linearization of the permutational representation of $F$ on $F/\overrightarrow{F}_{\!\!n}$ is irreducible.
\end{theorem}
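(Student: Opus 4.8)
The plan is to prove the commensurator statement exactly as Corollary \ref{thm:3} was deduced from Theorem \ref{question 2}: I will establish the analogue of Theorem \ref{question 2}, namely that whenever $h\in F\setminus\overrightarrow{F_n}$ the index $[\overrightarrow{F_n}:\overrightarrow{F_n}\cap h\overrightarrow{F_n}h^{-1}]$ is infinite. This says precisely that $\overrightarrow{F_n}$ is self-commensurating, and the irreducibility of the linearized permutation representation on $F/\overrightarrow{F_n}$ then follows from Mackey's criterion \cite{Mac}, as before. So I fix $h\notin\overrightarrow{F_n}$ and argue by contradiction: if the index were finite there would be $r\in\N$ with $g^r\in h\overrightarrow{F_n}h^{-1}$ for every $g\in\overrightarrow{F_n}$, hence $h^{-1}g^{rk}h\in\overrightarrow{F_n}$ for all $k\in\N$.

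The element I would feed into this is $g=(x_0\cdots x_{n-1})^{-1}$. By Lemma \ref{lm2n} (with $n$ replaced by $n+1$) the group $\overrightarrow{F_n}$ equals the Thompson subalgebra generated by $x_0\cdots x_{n-1}$, so $g$ and all its powers lie in $\overrightarrow{F_n}$. The one genuinely new ingredient is the analogue of Remark \ref{cor}: for every finite binary fraction $t$ one has $g^k(t)\to 0$, so for each $m$ there is $k$ with $g^k(t)<\tfrac{1}{2^m}$. Rather than manipulate digits, I would establish this dynamically. Each generator $x_i$ satisfies $x_i(t)\ge t$ on $[0,1]$, and $x_0(t)>t$ strictly for $t\in(0,1)$; composing maps that move points weakly upward with one that moves them strictly upward gives $(x_0\cdots x_{n-1})(t)>t$ for all $t\in(0,1)$. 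Hence $x_0\cdots x_{n-1}$ has no fixed point in $(0,1)$, its inverse $g$ satisfies $g(t)<t$ there, and iterating $g$ produces a strictly decreasing sequence in $[0,1]$ whose only possible limit is the fixed point $0$.

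With these two facts the contradiction runs as in Theorem \ref{question 2}, now using the characterization $\overrightarrow{F_n}=\bigcap_{i=0}^{n-1}\Stab(S_i)$ from the preceding theorem, where $S_i$ is the set of finite binary fractions whose sum of digits is $\equiv i\pmod n$; equivalently, an element of $F$ lies in $\overrightarrow{F_n}$ if and only if it preserves the sum of binary digits modulo $n$. Writing $\sigma(t)$ for the sum of binary digits of $t$, the hypothesis $h^{-1}\notin\overrightarrow{F_n}$ yields a finite binary fraction $t$ with $\sigma(h^{-1}(t))\not\equiv\sigma(t)\pmod n$; put $t_1=h^{-1}(t)$. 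By Lemma \ref{remark} (applied to $h$, which is already stated for an arbitrary element of $F$) there is $m$ so that $h$ preserves $\sigma$ on every finite binary fraction $<\tfrac{1}{2^m}$. Choosing $N=rk$ large enough that $g^N(t_1)<\tfrac{1}{2^m}$, I get $\sigma(g^N(t_1))\equiv\sigma(t_1)\pmod n$ since $g^N\in\overrightarrow{F_n}$, and $\sigma(h(g^N(t_1)))=\sigma(g^N(t_1))$ by the choice of $m$. Thus $h^{-1}g^Nh(t)=h(g^N(t_1))$ has $\sigma\equiv\sigma(t_1)\not\equiv\sigma(t)\pmod n$, so $h^{-1}g^Nh$ fails to preserve the partition $\{S_i\}$ and is not in $\overrightarrow{F_n}$, contradicting $h^{-1}g^{rk}h\in\overrightarrow{F_n}$.

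The only step that genuinely differs from the case $n=2$ is the analogue of Remark \ref{cor}, i.e.\ that iterating $g=(x_0\cdots x_{n-1})^{-1}$ drives every finite binary fraction to $0$; everything else is a transcription of the proof of Theorem \ref{question 2}, and the characterization as $\bigcap_i\Stab(S_i)$ plus Lemma \ref{remark} are available verbatim. I therefore expect the monotonicity observation above — that $x_0\cdots x_{n-1}$ has no fixed point in the open interval — to be the one point worth isolating, precisely because it replaces the $n$-dependent digit bookkeeping of Remark \ref{cor} by a single argument uniform in $n$.
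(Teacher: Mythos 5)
Your proposal is correct, and its skeleton is exactly what the paper intends: the paper omits this proof, saying it is ``almost identical'' to the proofs of Theorem \ref{thm:2} and Corollary \ref{thm:3}, and your argument is indeed a faithful transcription of Theorem \ref{question 2} --- same contradiction setup, same use of the stabilizer characterization via the sets $S_i$, and Lemma \ref{remark} applied verbatim to $h$. The one place you genuinely depart from the template is the analogue of Remark \ref{cor}: the paper's version for $n=2$ is digit bookkeeping specific to the action of $(x_0x_1)^{-1}$ on binary expansions, and generalizing it literally would require working out how $(x_0\cdots x_{n-1})^{-1}$ rewrites leading blocks of digits, a computation that grows messier with $n$. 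Your replacement --- observing that each $x_i$ satisfies $x_i(t)\ge t$ on $[0,1]$ with $x_0(t)>t$ strictly on $(0,1)$, so that $x_0\cdots x_{n-1}$ has no interior fixed point and the iterates of its inverse decrease monotonically to the fixed point $0$ --- is uniform in $n$, avoids the expansion arithmetic entirely, and is if anything cleaner than the paper's own Remark \ref{cor} even in the case $n=2$. The bookkeeping details all check out: $\overrightarrow{F}_{\!\!n}$ is the Thompson subalgebra generated by $x_0\cdots x_{n-1}$ by Lemma \ref{lm2n} (with the index shift you state), so your $g$ and its powers do lie in $\overrightarrow{F}_{\!\!n}$; membership in $\bigcap_i\Stab(S_i)$ is indeed equivalent to preserving the binary digit sum modulo $n$ (for a bijection, preservation for $f$ and for $f^{-1}$ are equivalent); and the case $n=1$ is vacuous since then no $h$ outside $\overrightarrow{F}_{\!\!1}=F$ exists.
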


%\proof Indeed, in the proof of Lemma \ref{lm2} we proved that $X$ and $X'$ are inside $\overrightarrow F$, and every element of $\overrightarrow F$ is either a sum of the trivial diagram $\1$ and some other diagram from $\overrightarrow F$ or is a product of an element from $X\cup X'$ and a reduced diagram from $\overrightarrow F$ with a shorter normal form or is the inverse of such an element. Since by Lemma \ref{lm1.5} the subgroup generated by $X\cup X'$ is closed under addition of the trivial diagram $\1$ on the left, the lemma is proved. \endproof

\subsection{The embeddings of $F_n$ into $F$ are natural}

Suppose that $G_i=\DG(\P_i,u_i)$, $i=1,2$, are diagram groups, and we can tessellate every cell from $\P_1$ by cells from $\P_2$, which turns every cell from $\P_1$ into a diagram over $\P_2$.  Then we can define a map from $G_1$ to $G_2$ which sends every diagram $\Delta$ from $G_1$ to a diagram obtained by replacing every cell from $\Delta$ by the corresponding diagram over $\P_2$. This map is obviously a homomorphism. Such homomorphisms were called {\em natural} in \cite{GSdc}.

Instead of giving a more precise general definition of a natural homomorphism from one diagram group into another, we give an example.
 Recall that $F_n=\DG(\la x\mid x=x^n\ra,x)$. Let $\pi_n$ be the cell $x\to x^n$. It has $n+1$ vertices $0,\ldots, n$. Suppose that $n\ge3$. Connect each vertex $1,\ldots, n-2$ with the vertex $n$ by an arc. The result is an $(x,x^n)$-diagram $\Delta(\pi_n)$ over the presentation $\la x\mid x=x^2\ra$. Now for every $(x,x)$-diagram $\Delta$ from $F_n$, consider the diagram $\phi(\Delta)$ obtained by replacing every cell $\pi_n$ (resp. $\pi_n\iv$) by a copy of $\Delta(\pi_n)$ (resp. $\Delta(\pi_n)\iv$). The resulting diagram belongs to $F=\DG(\la x\mid x=x^2\ra,x)$. It is easy to check that the map $\phi$ is a homomorphism.

 A set of generators of $F_n$ is described in \cite[Page 54]{GS}. Their left-right duals also generate $F_n$. If we apply $\phi$ to these generators, we get (using Lemma \ref{lm1}) elements $x_j\cdots x_{j+n-2}$, $j=0,\ldots, n-1$, which are generators of $\arr{F}_{\!\!n-1}$ by Corollary \ref{c:59}. Since $F_n$ does not have a non-injective homomorphism with a non-Abelian image, $\phi$ is an isomorphism between $F_n$ and $\arr{F}_{\!\!n-1}$. Figure \ref{f:89} below shows the left-right duals of generators of $F_3$ from \cite{GS}, and Figure \ref{f:90} shows the images of these generators under $\phi$.

%\definecolor{dunkelmagenta}{rgb}{1, 0, 0}

\begin{figure}[ht!]
\center{
% This is a LaTeX picture output by TeXCAD.
% File name: [89natural.pic].
% Version of TeXCAD: 4.3
% Reference / build: 30-Jun-2012 (rev. 105)
% For new versions, check: http://texcad.sf.net/
% Options on the following lines.
%\grade{\on}
%\emlines{\off}
%\epic{\off}
%\beziermacro{\on}
%\reduce{\on}
%\snapping{\off}
%\pvinsert{% Your \input, \def, etc. here}
%\quality{8.000}
%\graddiff{0.005}
%\snapasp{1}
%\zoom{4.0000}
\unitlength .5mm % = 2.845pt
\linethickness{0.4pt}
\ifx\plotpoint\undefined\newsavebox{\plotpoint}\fi % GNUPLOT compatibility
\begin{picture}(256,94.125)(0,0)
\put(5.5,42){\line(1,0){12.25}}
\put(85.25,42){\line(1,0){12.25}}
\put(165,42){\line(1,0){12.25}}
\put(242.75,42){\line(1,0){12.25}}
\put(229.75,42){\line(1,0){12.25}}
\put(18.5,42){\line(1,0){12.25}}
\put(98.25,42){\line(1,0){12.25}}
\put(178,42){\line(1,0){12.25}}
\put(31.5,42){\line(1,0){12.25}}
\put(111.25,42){\line(1,0){12.25}}
\put(191,42){\line(1,0){12.25}}
\put(44.5,42){\line(1,0){12.25}}
\put(124.25,42){\line(1,0){12.25}}
\put(204,42){\line(1,0){12.25}}
\put(57.5,42){\line(1,0){12.25}}
\put(137.25,42){\line(1,0){12.25}}
\put(217,42){\line(1,0){12.25}}
\put(5.75,41.75){\circle*{2}}
\put(85.5,41.75){\circle*{2}}
\put(165.25,41.75){\circle*{2}}
\put(18.25,41.75){\circle*{2}}
\put(98,41.75){\circle*{2}}
\put(177.75,41.75){\circle*{2}}
\put(255.5,41.75){\circle*{2}}
\put(242.5,41.75){\circle*{2}}
\put(31.25,41.75){\circle*{2}}
\put(111,41.75){\circle*{2}}
\put(190.75,41.75){\circle*{2}}
\put(44.25,41.75){\circle*{2}}
\put(124,41.75){\circle*{2}}
\put(203.75,41.75){\circle*{2}}
\put(57.25,41.75){\circle*{2}}
\put(137,41.75){\circle*{2}}
\put(216.75,41.75){\circle*{2}}
\put(70.25,41.75){\circle*{2}}
\put(150,41.75){\circle*{2}}
\put(229.75,41.75){\circle*{2}}
\qbezier(5.5,41.75)(38.625,87.25)(70.25,41.75)
\qbezier(85.25,41.75)(118.375,87.25)(150,41.75)
\qbezier(5.5,42)(27.75,65.125)(44,41.75)
\qbezier(98,42)(120.25,65.125)(136.5,41.75)
\qbezier(5.5,42.25)(41.125,-12.625)(70.25,42)
\qbezier(85.5,42.5)(121.125,-12.375)(150.25,42.25)
\qbezier(31,42)(50.625,15.875)(69.75,42.25)
\qbezier(111,42.25)(130.625,16.125)(149.75,42.5)
\qbezier(165,41.75)(209.375,122.125)(255.25,42)
\qbezier(190.75,41.75)(211.25,68.625)(229.75,42)
\qbezier(190.5,42)(218.5,88.375)(254.5,42.25)
\qbezier(165,41.75)(212.25,-36.375)(255.5,42)
\qbezier(190.25,42.25)(222.875,-7)(255,41.75)
\qbezier(216.5,42)(233,15.875)(254.5,42.25)
\end{picture}
}
\caption{Generators of $F_3$.}
\label{f:89}
\end{figure}
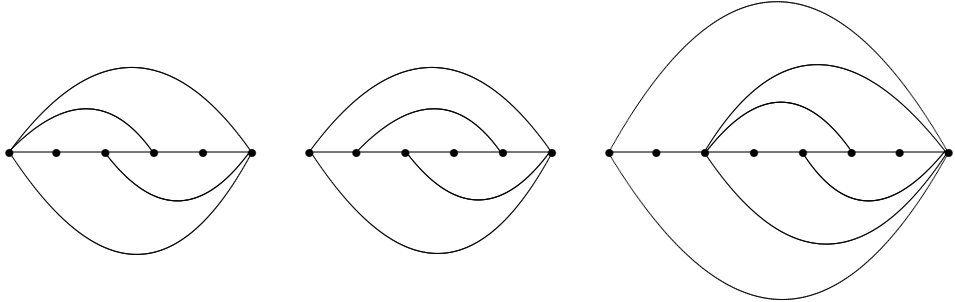

\begin{figure}[ht!]
\center{
% This is a LaTeX picture output by TeXCAD.
% File name: [90natural.pic].
% Version of TeXCAD: 4.3
% Reference / build: 30-Jun-2012 (rev. 105)
% For new versions, check: http://texcad.sf.net/
% Options on the following lines.
%\grade{\on}
%\emlines{\off}
%\epic{\off}
%\beziermacro{\on}
%\reduce{\on}
%\snapping{\off}
%\pvinsert{% Your \input, \def, etc. here}
%\quality{8.000}
%\graddiff{0.005}
%\snapasp{1}
%\zoom{4.0000}
\unitlength .5mm % = 2.845pt
\linethickness{0.4pt}
\ifx\plotpoint\undefined\newsavebox{\plotpoint}\fi % GNUPLOT compatibility
\begin{picture}(256,94.125)(0,0)

\put(5.5,42){\line(1,0){12.25}}
\put(85.25,42){\line(1,0){12.25}}
\put(165,42){\line(1,0){12.25}}
\put(242.75,42){\line(1,0){12.25}}
\put(229.75,42){\line(1,0){12.25}}
\put(18.5,42){\line(1,0){12.25}}
\put(98.25,42){\line(1,0){12.25}}
\put(178,42){\line(1,0){12.25}}
\put(31.5,42){\line(1,0){12.25}}
\put(111.25,42){\line(1,0){12.25}}
\put(191,42){\line(1,0){12.25}}
\put(44.5,42){\line(1,0){12.25}}
\put(124.25,42){\line(1,0){12.25}}
\put(204,42){\line(1,0){12.25}}
\put(57.5,42){\line(1,0){12.25}}
\put(137.25,42){\line(1,0){12.25}}
\put(217,42){\line(1,0){12.25}}
\put(5.75,41.75){\circle*{2}}
\put(85.5,41.75){\circle*{2}}
\put(165.25,41.75){\circle*{2}}
\put(18.25,41.75){\circle*{2}}
\put(98,41.75){\circle*{2}}
\put(177.75,41.75){\circle*{2}}
\put(255.5,41.75){\circle*{2}}
\put(242.5,41.75){\circle*{2}}
\put(31.25,41.75){\circle*{2}}
\put(111,41.75){\circle*{2}}
\put(190.75,41.75){\circle*{2}}
\put(44.25,41.75){\circle*{2}}
\put(124,41.75){\circle*{2}}
\put(203.75,41.75){\circle*{2}}
\put(57.25,41.75){\circle*{2}}
\put(137,41.75){\circle*{2}}
\put(216.75,41.75){\circle*{2}}
\put(70.25,41.75){\circle*{2}}
\put(150,41.75){\circle*{2}}
\put(229.75,41.75){\circle*{2}}
\qbezier(5.5,41.75)(38.625,87.25)(70.25,41.75)
\qbezier(85.25,41.75)(118.375,87.25)(150,41.75)
\qbezier(5.5,42)(27.75,65.125)(44,41.75)
\qbezier(98,42)(120.25,65.125)(136.5,41.75)
\qbezier(5.5,42.25)(41.125,-12.625)(70.25,42)
\qbezier(85.5,42.5)(121.125,-12.375)(150.25,42.25)
\qbezier(31,42)(50.625,15.875)(69.75,42.25)
\qbezier(111,42.25)(130.625,16.125)(149.75,42.5)
\qbezier(165,41.75)(209.375,122.125)(255.25,42)
\qbezier(190.75,41.75)(211.25,68.625)(229.75,42)
\qbezier(190.5,42)(218.5,88.375)(254.5,42.25)
\qbezier(165,41.75)(212.25,-36.375)(255.5,42)
\qbezier(190.25,42.25)(222.875,-7)(255,41.75)
\qbezier(216.5,42)(233,15.875)(254.5,42.25)
%\linethickness{1.1pt}
\color{red}

\qbezier(43.75,41.75)(55.125,60.125)(70,42)
\qbezier(18.25,42)(30.5,57.625)(43.75,41.75)
\qbezier(44,41.75)(56.25,26.875)(69.5,42.5)
\qbezier(18,42)(48,4)(70,42)
\qbezier(111,42)(123.375,56)(136.25,42)
\qbezier(98,41.75)(119,75.875)(149,42.5)
\qbezier(124,42)(137.625,28.5)(149.75,42)
\qbezier(98.25,42.25)(132.625,7.875)(149.5,42)
\qbezier(177.75,42.25)(215.25,105.25)(254.75,42.25)
\qbezier(229.5,42)(240,60.5)(254.5,42)
\qbezier(204,42.5)(216.75,60.25)(229.5,42)
\qbezier(229.75,42)(242.375,27.25)(254.5,42.5)
\qbezier(203.75,42)(232.625,6.25)(255,42.5)
\qbezier(177.5,42)(223.5,-18.625)(254.5,42.25)
%}
\end{picture}
}
\caption{Images of generators of $F_3$ under $\phi$. Red arcs are added by $\phi$.}
\label{f:90}
\end{figure}
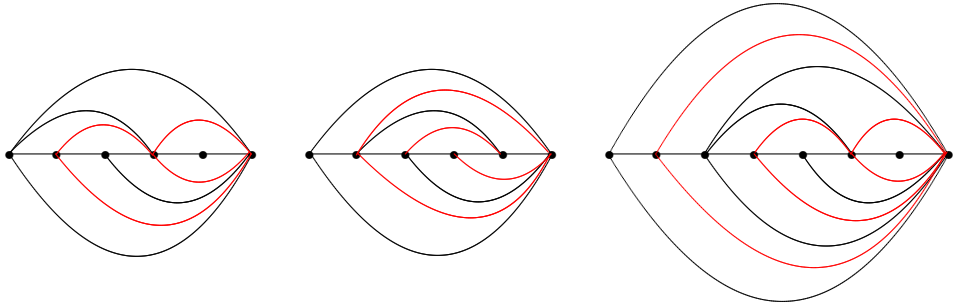

Note that $\phi$ is also an example of a \emph{caret replacement} homomorphism from $F_p$ to $F_q$ studied in \cite{BCS}. It is proved in \cite{BCS} that the image of $F_p$ under $\phi$ is undistorted in $F$.

\section{The Jones' construction}\label{s:6}

In this section we analyze the connection between elements of Thompson group $F$ and links, as explained by Jones \cite{Jo}. Given a link $L$, our analysis yields a linear bound on the word length of an element of Thompson group $F$, which represents $L$, in terms of the number of crossings and the number of unlinked unknots in $L$. Our construction essentially differs from that in \cite{Jo} only in one step (step number 2).

%As in \cite{Jo} we consider links only up to a finite family of disjoint unlinked unknots. That is, if $L_1$ and $L_2$ are two links which differ from each other by a finite family of unlinked unknots, we consider them to be equivalent.

We will sometimes abuse notation and refer to a link and a link diagram as the same object. All link diagrams we consider are either connected (i.e., their underlying graph is connected) or with connected components "far apart", that is, with no connected component bounding another. The same is true for all plane graphs considered below. %Throught this appendix we use the link $\L$ from figure \ref{

\subsection{Step 1. From links to signed plane graphs}

This is basically standard knot theory.
Let $L$ be a link represented by some link diagram, also denoted by $L$. %We will demonstrate all the claims and steps in this appendix using the link $L$ in Figure \ref{fig:link1}.
If $L$ is connected, then the underlying graph of $L$ is a $4$-regular plane graph and in particular an Eulerian graph. As such, its dual graph is bipartite. Therefore, it is possible to color the regions of $L$ in gray and white so that the unbounded region of $L$ is white and no two adjacent regions (regions which share a boundary component) have the same color. Clearly, this is true for disconnected links as well. We define a signed plane graph $\G(L)$ as follows. We put a vertex inside every gray region and draw one edge between two vertices for every common point  of the boundaries of these regions (i.e., a crossing in the link diagram). We label an edge with "+" or "-" according to the type of the crossing as defined in Figure \ref{fig:signs}. The process is demonstrated in Figure \ref{fig:shaded_and_graph} for the link $L$ from Figure  \ref{fig:link1} .

\begin{figure}[h]
\centering
\includegraphics[width=0.2\columnwidth]{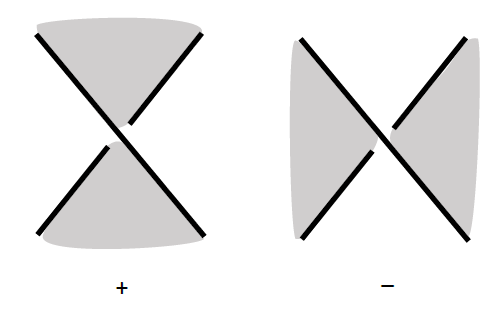}
\caption{The types of crossings.}
    \label{fig:signs}
\end{figure}

\begin{figure}[h]
\centering
\includegraphics[width=.45\columnwidth]{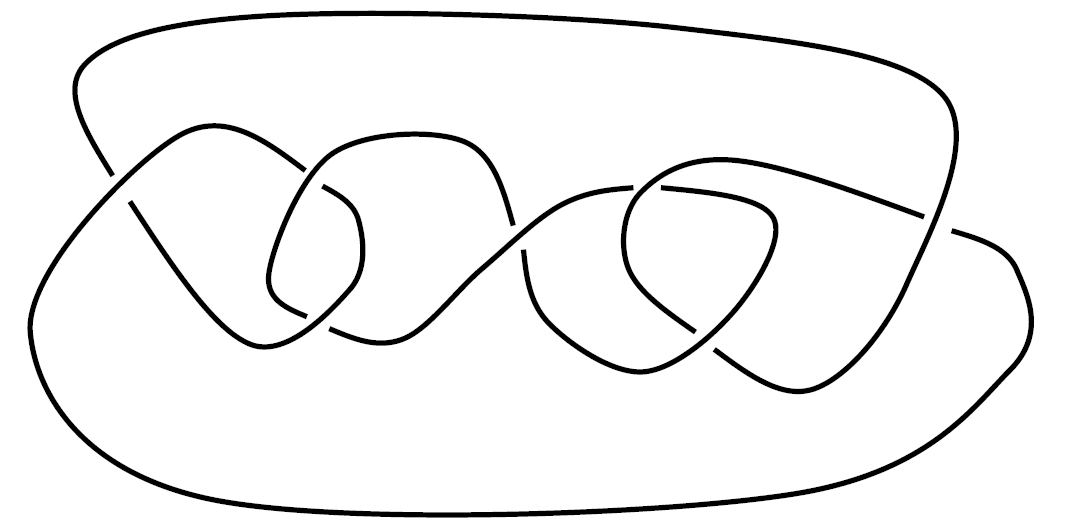}
\caption{A link L}
    \label{fig:link1}
\end{figure}

\begin{figure}[h]
\centering
\begin{subfigure}{.5\textwidth}
  \centering
  \includegraphics[width=.6\linewidth]{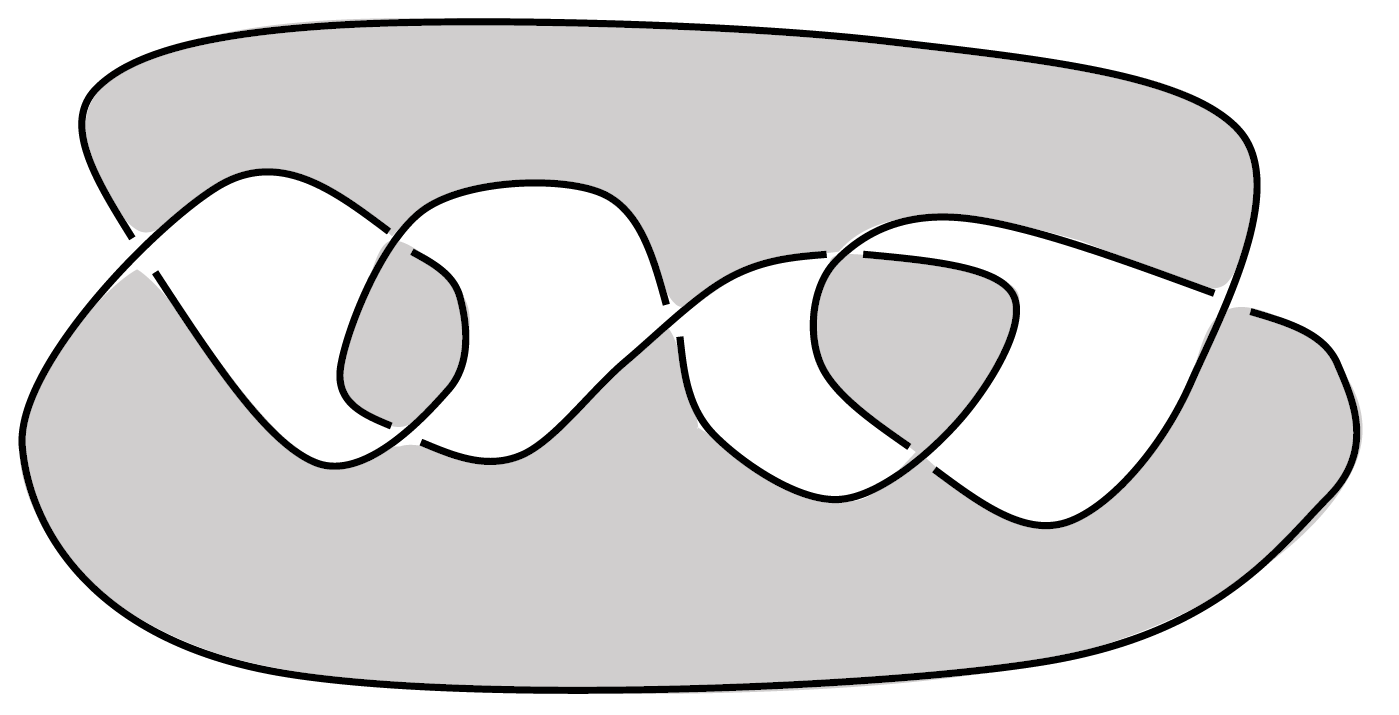}
  \caption{}
  \label{fig:}
\end{subfigure}%
\begin{subfigure}{.5\textwidth}
  \centering
  \includegraphics[width=.7\linewidth]{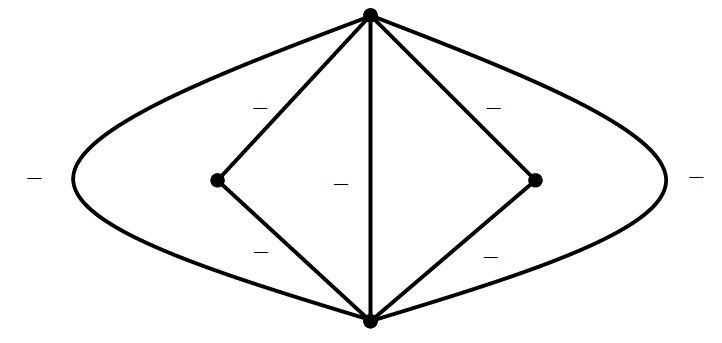}
  \caption{}
  \label{fig:shaded and graph}
\end{subfigure}
\caption{(a) Gray and white regions. (b) The signed plane graph $\G(L)$.}
\label{fig:shaded_and_graph}
\end{figure}

Clearly this step is reversible: from any plane graph $\Gamma$ with edges signed by "+" and "-" one can reconstruct a link diagram $L$ such that $\G(L)=\Gamma$. We denote this link diagram $L$ by $\La(\Gamma)$.

\subsection{Local moves on signed plane graphs}

Our goal is to turn a signed plane graph into the Thompson graph of some element from $F$. For this purpose, Jones introduced three moves.

{\bf Move of type 1:} If $v$ is a vertex of degree one, we eliminate $v$ together with the unique edge attached to it.

\begin{figure}[h!]
\centering
\includegraphics[width=0.5\columnwidth]{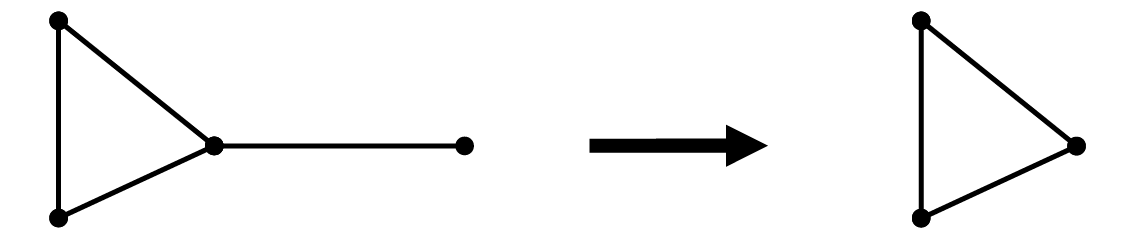}
\caption{Type 1 move}
    \label{fig:type1}
\end{figure}

{\bf Move of type 2:} Let $v$ be a vertex of degree $2$ such that the edges attached to it have opposite signs and do not share their other end vertex. We eliminate $v$ together with the edges attached to it and identify the endpoints of said edges.

\begin{figure}[h!]
\centering
\includegraphics[width=0.5\columnwidth]{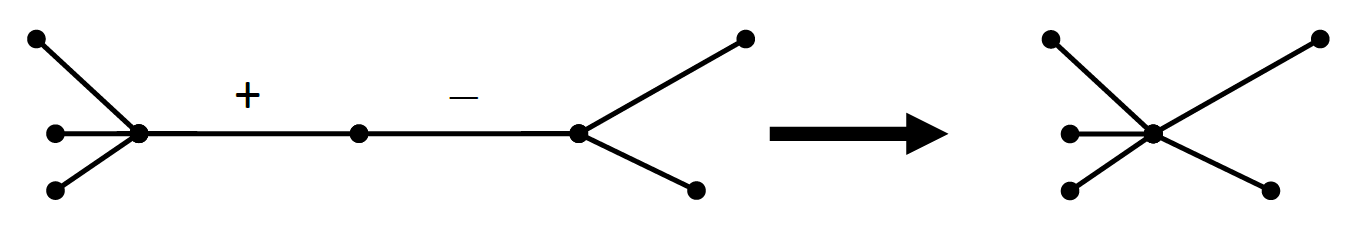}
\caption{Type 2 move}
    \label{fig:type2}
\end{figure}

{\bf Move of Type 3:} If two edges with
%\begin{figure}[h!]
%\centering
%\includegraphics[width=\columnwidth]{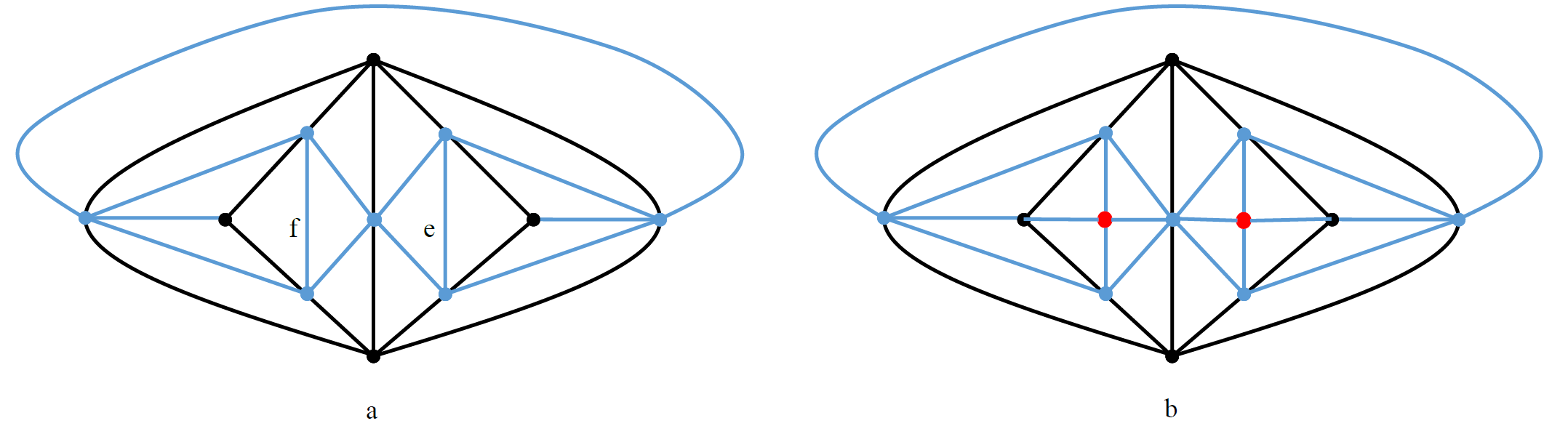}
%\caption{(a) a triangulation of the subdivision $D(\Gamma)$ of $\Gamma$ from Figure \ref{fig:shaded and graph}.
%(b) The graph $\Gamma'$ after two iterations. The edge $e$ and $f$ were divided in two and the graph was triangulated. There are no more separating triangles so the graph is $2$-page book embeddable.}
    %\label{kaufmann}
%\end{figure}
opposite signs $e$ and $e'$ join two vertices such that there are no other vertices and no edges between $e$ and $e'$, we erase the edges $e$ and $e'$.

\begin{figure}[h!]
\centering
\includegraphics[width=0.5\columnwidth]{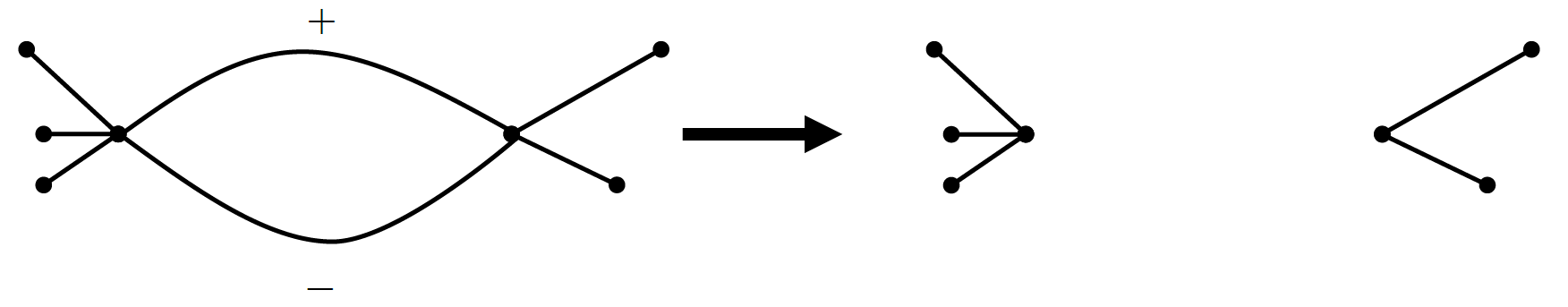}
\caption{Type 3 move}
    \label{fig:type3}
\end{figure}

%If a move of type $1-3$ creates isolated vertices, we remove them as part of the move.
%The inverse moves to the moves of types $1-3$ are clear. Note, that it is possible to add isolated vertices to the graph and then apply an inverse move, as long as after the application of the move we remain with no isolated vertex. In that case, we would consider the addition of the isolated vertices to be part of the move.
If after the application of a move of type $3$ the resulting graph has a connected component which bounds another, we ``separate'' the components so that none of them would bound the other and consider it to be a part of the move.
From now on, we shall refer to a move inverse to a move of type $i$ as a move of type $i$ as well.
We say that two signed plane graphs $\Gamma_1$ and $\Gamma_2$ are \emph{equivalent} if it is possible to get from one to the other by a finite series of applications of moves of types 1-3 and isotopy of the plane.
If $L$ is a link and $\Gamma=\G(L)$ is the corresponding signed plane graph, then applying a move of type 1-3 to $\Gamma$ is equivalent to applying a Reidemeister move to $L$ (and possibly distancing some unlinked components of the link). %and possibly erasing or adding an unlinked unknot.
Thus we have the following.

\begin{Lemma}
%Let $L$ be a link diagram and $\Gamma=\G(L)$ be the corresponding signed planar graph. If $\Gamma'$ is a signed planar graph equivalent to $\Gamma$ then
Let $L_1$ and $L_2$ be two link diagrams. Let $\G(L_1)$ and $\G(L_2)$ be the corresponding signed plane graphs.
If $\G(L_1)$ and $\G(L_2)$ are equivalent, then $L_1$ and $L_2$ are link diagrams of the same link.
\end{Lemma}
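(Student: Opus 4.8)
The plan is to reduce the statement to the classical theorem of Reidemeister, using the correspondence recorded immediately before the lemma: each elementary move on signed plane graphs mirrors a Reidemeister move on the associated link diagram. First I would note that the assignments $L\mapsto\G(L)$ and $\Gamma\mapsto\La(\Gamma)$ are mutually inverse up to plane isotopy, so that $\La(\G(L_i))$ is a diagram of the same link as $L_i$ for $i=1,2$. Hence it is enough to follow the diagram $\La(\Gamma)$ as the signed plane graph $\Gamma$ is modified, and to check that its link type never changes.

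By the definition of equivalence there is a finite sequence $\G(L_1)=\Gamma_0,\Gamma_1,\dots,\Gamma_k=\G(L_2)$ in which each $\Gamma_{j+1}$ arises from $\Gamma_j$ by a single move of type $1$, $2$, or $3$ (or the inverse of such a move) or by a plane isotopy. Thus it suffices to show that a single such step preserves the link type of $\La(\Gamma_j)$; the general statement then follows by induction on $k$, since ``representing the same link'' is a transitive relation.

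For the single step, a plane isotopy of $\Gamma_j$ induces a plane isotopy of the diagram $\La(\Gamma_j)$, which trivially preserves the link. If instead $\Gamma_{j+1}$ is obtained by a move of type $i$, then, by the observation stated above the lemma, the passage $\La(\Gamma_j)\to\La(\Gamma_{j+1})$ is a Reidemeister move, possibly followed by an isotopy separating unlinked components (which does not affect the ambient isotopy class). Concretely, under the medial (Tait graph) correspondence a pendant edge encodes a nugatory crossing, so a type-$1$ move realizes a Reidemeister~I move; a move of type $2$ removes a shaded bigon while a move of type $3$ removes an unshaded bigon, and in both cases the local picture of $\La$ is exactly that of a Reidemeister~II move, the two cases being the two checkerboard colorings of the bigon. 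Away from the modified region $\La$ is unchanged, so the alteration is genuinely local.

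Finally, Reidemeister's theorem asserts that two link diagrams joined by a finite sequence of Reidemeister moves and plane isotopies are diagrams of the same link. Applying it to $\La(\Gamma_0),\dots,\La(\Gamma_k)$ and using that $\La(\Gamma_0)$ and $\La(\Gamma_k)$ are diagrams of the links of $L_1$ and $L_2$ respectively yields the result. I expect the main obstacle to be precisely the per-move verification of the previous paragraph: matching each of the three local graph modifications---with the sign convention of Figure~\ref{fig:signs} and the component-separation clause attached to type-$3$ moves---to the correct Reidemeister move. Once this local dictionary is in place, the remainder of the argument is the formal induction above.
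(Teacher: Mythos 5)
Your proposal is correct and takes essentially the same approach as the paper: the paper derives this lemma immediately from the observation stated just before it, namely that each move of type 1--3 on a signed plane graph corresponds to a Reidemeister move on the associated link diagram (possibly together with separating unlinked components), and then invokes Reidemeister's theorem implicitly. Your write-up simply makes explicit the induction over the sequence of moves and the local dictionary (type 1 $\leftrightarrow$ Reidemeister I, types 2 and 3 $\leftrightarrow$ the two checkerboard versions of Reidemeister II), which the paper leaves to the reader.
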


\begin{Remark}\label{no_loops}
Let $L$ be a link diagram with $n$ crossings. It is easy to see that by applying Reidemeister moves if necessary, it is possible to get a link diagram $L'$ with at most $n$ crossings such that the corresponding signed plane graph $\G(L')$ has no loops. Therefore, from now on all plane graphs considered here are assumed to have no loops.
\end{Remark}

%\subsubsection{From elements of Thompson group $F$ to links}
%
%\begin{definition}\label{def}
%Let $\Delta$ be a diagram over the semigroup presentation $\la x:x=x^2\ra$. We define the signed planar graph $\Gamma(\Delta)$ to be the Thompson graph $T(\Delta)$ where all the upper edges are labeled by "+" and all the lower edges are labeled by "-". The link \emph{represented by $\Delta$} is the link $\La(\Gamma(\Delta))$.
%\end{definition}
%
%Our goal is to show that every link $L$ is represented by some reduced diagram in Thompson group $F$.
%
%\begin{remark}\label{reduced_link}
%Let $\Delta$ be a $(1,1)$-diagram over $\la x:x=x^2\ra$, which has no dipoles of the form $\pi\iv\circ\pi$. That is equivalent to saying that $\Delta$ is the product of two diagrams, $\Delta=\Delta^{+}\circ\Delta^{-}$ where $\Delta^+$ is positive and $\Delta^-$ is negative. Then, if $\Delta'$ results from $\Delta$ by canceling a $\pi\circ\pi\iv$ dipole, then $\Delta$ and $\Delta'$ represent the same link.
%\end{remark}
%
%\begin{proof}
%To get the graph $T(\Delta')$ we remove from $T(\D)$ a vertex with exactly two edges, one upper and one lower, connecting it to another vertex of $T(\Delta)$. Thus $T(\Delta')$ results from $T(\D)$ by applying a move of type $3$.
%\end{proof}

\subsection{Plane graphs embeddable into a 2-page book}

%That's not the standard definition!!!! Usually isotopy is not required and then there is no need for external subhamiltonian, I think.

\begin{definition}\label{2def}
Let $\Gamma$ be a plane graph. A \emph{$2$-page book embedding} of $\Gamma$ is a planar isotopy which takes all the vertices of $\Gamma$ to the $x$-axis and each edge of $\Gamma$ to an edge whose interior is entirely above the $x$-axis or entirely below it.
$\Gamma$ is \emph{$2$-page book embeddable} if there exists such a planar isotopy.
%We say that $\Gamma$ is 2-page book embeddable if there exists an isotopy of the plane which takes all the vertices of $\Gamma$ to the $x$-axis and each edge of $\Gamma$ to an edge whose interior is entirely above the $x$-axis or entirely below it.
\end{definition}

Our definition of a $2$-page book embedding differs slightly from the standard definition. In the standard definition, the planar isotopy can be replaced by any graph embedding. Whenever we speak of $2$-page book embeddings we mean it in the sense of Definition \ref{2def}.
%For a planar graph $\Gamma$, being $2$-page book embeddable is equivalent to being \emph{external subhamiltonian}; that is, being a subgraph of a planar graph which has a Hamiltonian cycle with an edge on the boundary of the outer face.

For a plane graph $\Gamma$, being 2-page book embeddable is equivalent to the condition that there exists an infinite oriented simple curve $\alpha$ which passes through all the vertices of $\Gamma$, does not cross any of its edges, and whose $2$ infinite rays are in the outer face of $\Gamma$. Indeed, given such a curve, an isotopy which takes $\alpha$ to the $x$-axis (oriented from left to right) is a $2$-page book embedding of $\Gamma$.

A plane graph $\Gamma$ is \emph{external Hamiltonian} if it has a Hamiltonian cycle with at least one edge on the outer face of $\Gamma$. %passes through one of the edges of its unbounded region.
A plane graph is \emph{external subhamiltonian} if it is a subgraph of some external Hamiltonian plane graph.
It is easy to see that a plane graph $\Gamma$ is $2$-page book embeddable if and only if it is external subhamiltonian. %\cite{...}.

%Equivalently, $\Gamma$ is $2$-page book embeddable iff it is external subhamiltonian. A planar graph is said to be \emph{external Hamiltonian} if it has a Hamiltonian cycle which passes through one of the edges of its unbounded region. A planar graph is \emph{external subhamiltonian} if it is a subgraph of some external Hamiltonian planar graph.

 %A planar graph $\Gamma$ is said to be \emph{external subhamiltonian} if it is a subgraph of a planar \emph{external Hamiltonian} graph $\Gamma'$ which has a Hamiltonian cycle which passes through at least one of the edges bounding the outer face of $\Gamma'$.

%It is also equivalent to $\Gamma$ being \emph{external subhamiltonian} which means that $\Gamma$ is a subgraph of a planar Hamilotonian graph $\Gamma'$ which has a Hamiltonian cycle with one of the edges of the cycle is a boundary edge of the outer face of $\Gamma'$

%****Write somewhere that it is always possible to assume that a graph is without loops....

The following is an extension of a theorem of Kaufmann and Wiese \cite{KW}  for simple plane graphs. A graph $\Gamma$ is said to be \emph{$k$-connected} if for every set $S$ of at most $k-1$ vertices, the graph resulting from $\Gamma$ after the removal of the vertices in $S$ and the edges incident to them, is connected. %In particular, a $1$-connected graph is simply a connected graph.

\begin{theorem}\label{thm:kauf}
Let $\Gamma$ be a plane graph with no loops. Let $D(\Gamma)$ be the subdivision of $\Gamma$ where each edge is divided in two. Then $D(\Gamma)$ is 2-page book embeddable.
\end{theorem}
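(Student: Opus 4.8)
The plan is to reduce the loopless multigraph case to the theorem of Kaufmann and Wiese \cite{KW} for simple plane graphs, using the two characterizations of $2$-page book embeddability recorded just above the statement: a plane graph is $2$-page book embeddable if and only if it is external subhamiltonian, equivalently if and only if there is an infinite oriented simple curve $\alpha$ passing through all of its vertices, crossing none of its edges, and having both rays in the outer face. I would work primarily with the curve formulation, since it makes the topological bookkeeping transparent; I would also use that external subhamiltonicity is closed under passing to plane subgraphs, which is immediate from the definition (a subgraph of a subgraph of an external Hamiltonian graph is again such a subgraph).

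First I would let $G$ be the \emph{underlying simple plane graph} of $\Gamma$, obtained by deleting all but one edge in each class of parallel edges while keeping the given plane embedding. Since $\Gamma$ has no loops, $G$ is a simple plane graph with no loops, so the theorem of Kaufmann and Wiese applies and yields a curve $\alpha_G$ witnessing that $D(G)$ is $2$-page book embeddable. This $\alpha_G$ already passes through every original vertex of $\Gamma$ and through one midpoint for each edge retained in $G$. The remaining task is to reinstate the subdivided parallel edges of $\Gamma$: I must enlarge the curve so that it also passes through the midpoint $m_e$ of each deleted parallel edge $e$, while still crossing none of the new half-edges $u m_e$ and $m_e v$ and keeping both rays in the outer face.

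I would carry this out by induction on the number of edges of $\Gamma$ beyond those of $G$, reinstating the parallel edges of a given class one at a time in the order dictated by their nesting in the plane. Two edges parallel between $u$ and $v$ are nested in the plane embedding, so each parallel class is linearly ordered, with consecutive members cobounding a bigon. When I reinstate the edge $e$ lying immediately next to an already-present path $u - m - v$, its midpoint $m_e$ sits on the arc of $e$ adjacent to that path; I insert $m_e$ into the curve by making a short detour off $\alpha$ on the side prescribed by the plane embedding and route $u m_e, m_e v$ along the arc of $e$. Because consecutive members of the class cobound a disk and I reinstate them from the inside out, the detour can be drawn without crossing any previously placed edge, and since it is performed in the interior it does not touch the two rays, so the outer-face condition is preserved.

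The main obstacle is precisely this planarity bookkeeping: I must check that the nesting order of each parallel class in the plane embedding of $\Gamma$ is compatible with the order in which the corresponding detours are attached along $\alpha$, so that the successive excursions nest without forcing a crossing, and that detours belonging to different parallel classes, or to a class whose bounding path was routed with its two half-edges on opposite pages, do not interfere. Organizing the induction so that each class is filled in from the innermost edge outward, and always attaching $m_e$ on the side of $\alpha$ determined by the plane embedding, is what makes all of these local detours simultaneously realizable. (An alternative would be to re-examine the Kaufmann–Wiese argument and verify that loops are its only genuine obstruction; the hypothesis that $\Gamma$ has no loops, which is in any case the standing assumption of Remark \ref{no_loops}, is exactly what guarantees that distinct parallel edges receive distinct midpoints, so that $D(\Gamma)$ and the auxiliary graph $G$ are genuinely loopless and the cited theorem applies.)
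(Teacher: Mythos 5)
There is a genuine gap, and it sits exactly at the step you yourself flag as ``the main obstacle.'' Your reduction applies the Kaufmann--Wiese theorem as a black box to the underlying simple plane graph $G$, and then must reinstate the deleted parallel edges; but the curve $\alpha_G$ you obtain knows nothing about those deleted edges, since they are not edges of $D(G)$. Nothing prevents $\alpha_G$ from crossing a deleted edge $e$ -- possibly many times, and far away from its midpoint $m_e$. After you reinstate the subdivided edge $u\,m_e,\ m_e\,v$, every such crossing violates the defining property of the curve, and removing the crossings requires a global rerouting of $\alpha_G$, not a ``short detour'' near $m_e$. A second problem compounds this: consecutive members of a parallel class do \emph{not} in general cobound an empty bigon -- the region between them may contain other vertices, edges incident to $u$ or $v$, whole connected pieces of the graph, and arbitrary portions of $\alpha_G$ -- so ``filling in each class from the innermost edge outward'' does not by itself make the detours realizable or non-interfering. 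No argument is offered for either point, and these are precisely the places where a proof is needed. There is also a foundational mismatch: the paper's Definition \ref{2def} requires a planar \emph{isotopy} of the given embedding (this is essential later, since re-embedding a signed plane graph can change the associated link), whereas citing \cite{KW} as a black box yields at best the standard, embedding-free notion; your curve formulation silently assumes the stronger version for $G$.

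The paper's proof avoids all of this by never deleting any edges. The hypothesis that $\Gamma$ has no loops is used for more than ``distinct midpoints'': it guarantees that $D(\Gamma)$ itself is a \emph{simple} plane graph (parallel edges become internally disjoint paths of length two), so one can run the Kaufmann--Wiese-type argument directly on $D(\Gamma)$ rather than on an auxiliary subgraph. Concretely, the paper triangulates $D(\Gamma)$ to a maximal simple plane graph $\Gamma'$ and invokes Whitney's theorem \cite{Wh} that a maximal simple plane graph with no separating triangles is external Hamiltonian. The new ingredient needed for the multigraph case is Lemma \ref{above}: in any triangle of $\Gamma'$, at least one edge is \emph{not} a sub-edge of an edge of $\Gamma$ (because edges of $D(\Gamma)$ join original vertices to midpoints, and a triangle cannot alternate that way three times). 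This lemma is what allows separating triangles to be destroyed by subdividing only the auxiliary edges added in the triangulation, so that $D(\Gamma)$ -- and not some further subdivision of it -- survives intact as a subgraph of the final external Hamiltonian graph. Your parenthetical alternative (``verify that loops are the only genuine obstruction in the Kaufmann--Wiese argument'') points in this direction, but the content of the verification is exactly Lemma \ref{above}, which your proposal does not supply. If you want to salvage your approach, you should abandon the deletion/reinstatement scheme and instead adapt the triangulation argument to $D(\Gamma)$ as the paper does.
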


\begin{proof}
The graph $D(\Gamma)$ is a simple plane graph. As such, it is possible to triangulate it:  to complete it to a maximal simple plane graph by adding to it a finite number of edges (see Figure \ref{kaufmann}a). Let $\Gamma'$ be the resulting graph. The boundary of every face (including the outer face) of $\Gamma'$ is composed of exactly 3 edges.

We say that a triangle $\Delta(a,b,c)$ in the graph is a \emph{separating triangle} if its removal would make the graph disconnected. It is easy to see that every separating triangle is not the boundary of a face. By a result of Whitney \cite{Wh} (see also \cite{Ch,He}), every maximal simple plane graph with no separating triangles is external Hamiltonian. Thus, if $\Gamma'$ has no separating triangles, it is $2$-page book embeddable and we are done. (Indeed, $D(\Gamma)$ embeds into $\Gamma'$). Assume that $\Gamma'$ has a separating triangle. We will need the following Lemma.

%In addition, $\Gamma'$ being a maximal simple plane graph, is $3$-connected. A result of Tutte \cite{Tu1,Tu2} (see also \cite{CN}) shows that every $4$-connected simple planar graph is external Hamiltonian and thus $2$-page book embeddable. Therefore, if $\Gamma'$ is $4$-connected we are done. (Indeed, $D(\Gamma)$ embeds into $\Gamma'$.)
%
%Assume that $\Gamma'$ is not $4$-connected. Then, it has a \emph{separating triple}. That is, a set of $3$ vertices $a,b,c$ such that removing them and the edges incident to them from $\Gamma'$ results in a disconnected graph. It is possible to show that the vertices ${a,b,c}$ must be connected to each other by edges in $\Gamma'$, so that they form a triangle in $\Gamma'$. In addition, $a,b,c$ being a separating triple implies that the triangle $\Delta(a,b,c)$ is not the boundary of any face in $\Gamma'$. Such a triple is called a \emph{separating triangle}. We will need the following lemma.

\begin{lemma}\label{above}
Let $a,b,c$ be the vertices of a triangle in $\Gamma'$. Then, at least one of the edges $(a,b),(b,c)$ and $(c,a)$ is not a sub-edge of any of the edges of $\Gamma$.
\end{lemma}

\begin{proof}
The vertices of $\Gamma'$ can be divided into two disjoint sets: the vertices of $\Gamma$ and the midpoints of edged of $\Gamma$.
If $(a,b)$ is a sub-edge of an edge of $\Gamma$ then without loss of generality, we can assume that $a$ is a vertex of $\Gamma$ and $b$ is a midpoint of an edge of $\Gamma$. Since $a$ and $c$ are joined by an edge in $\Gamma'$, $c$ must be a midpoint of an edge of $\Gamma$. Indeed, in $\Gamma'$, all the neighbors of a vertex of $\Gamma$ are midpoints of edges of $\Gamma$. Then, both end vertices of the edge $(b,c)$ are midpoints of edges of $\Gamma$. Therefore, $(b,c)$ is not a sub-edge of any edge of $\Gamma$.
\end{proof}

Now we can complete the proof. If $\Delta(a,b,c)$ is a separating triangle %, then one of its edges is not a sub-edge of $\Gamma$. Dividing this edge in two and triangulating the resulting graph results in a maximal simple planar with one less separating tri
we choose an edge of $\Delta(a,b,c)$ which is not a sub-edge of any edge of $\Gamma$. We separate it into two edges and triangulate the resulting graph. The number of separating triangles in the graph decreases after this step, no new separating triangle occurs and Lemma \ref{above} still holds for the resulting graph.

After a finite number of iterations (for an illustration see Figure \ref{kaufmann}b), we get a maximal simple plane graph with no separating triangles and thus a $2$-page book embeddable graph.
Note that during the process of eliminating the separating triangles of $\Gamma'$, we have only divided edges which are not sub-edges of edges of $\Gamma$. Thus, the subdivision $D(\Gamma)$, where every edge of $\Gamma$ is divided in two, is a subgraph of the resulting maximal simple plane graph and in particular it is 2-page book embeddable.
\end{proof}

%\lipsum[2]
\begin{figure}[h!]
\captionsetup{width=0.8\columnwidth}
\centering
\includegraphics[width=.85\columnwidth]{}
\caption{(a) a triangulation of the subdivision $D(\Gamma)$ of $\Gamma$ from Figure \ref{fig:shaded and graph}.
(b) The graph $\Gamma'$ after two iterations. The edges $e$ and $f$ were divided in two and the graph was triangulated. There are no more separating triangles so the graph is $2$-page book embeddable.}
    \label{kaufmann}
%    \caption{\protect\lipsum[4]}
\end{figure}

\subsection{Step 2. From signed plane graphs to 2-page book embedded graphs}

%Let $L$ be a link diagram with $n$ crossings. By applying Reidemeister moves if necessary we can get a link $L'$ with at most $n$ crossings such that the corresponding signed planar graph has no loops. Thus we can assume that $\Gamma=\G(L)$ has no loops.

Let $\Gamma$ be a signed plane graph with no loops. At first we consider the graph $\Gamma$ as a non labeled plane graph. Let $D(\Gamma)$ be the subdivision of $\Gamma$ where every edge is divided in two. By Theorem \ref{thm:kauf}, $D(\Gamma)$ is $2$-page book embeddable. Therefore there exists an oriented simple curve $\alpha$ such that $\alpha$ passes through all the vertices of $D(\Gamma)$, does not cross any of its edges and the two infinite rays of $\alpha$ are in the outer face of $D(\Gamma)$. See Figure \ref{fig:alpha_a}.

Let $v$ be a midpoint of an edge $e$ of $\Gamma$, considered as a vertex of $D(\Gamma)$. When the curve $\alpha$ passes through $v$, it either remains on the same side of $e$ or crosses the edge $e$. In the first case, we can eliminate the vertex $v$ from $D(\Gamma)$ and adjust the curve $\alpha$ accordingly. The result would be a 2-page book embeddable graph (with one less vertex) together with a suitable curve $\alpha$. Eliminating all the possible midpoints of edges of $\Gamma$ in this way, results in a plane graph $\Gamma'$. see Figure \ref{fig:alpha_b}.

%Let $\Gamma'$ be the graph resulting from $D(\Gamma)$ after the removal of all the midpoints of $\Gamma$ which satisfy the first option above and let $\alpha$ be the adjusted curve (see Figure \ref{fig:alpha_b})

\begin{figure}[h]
\captionsetup{width=0.8\columnwidth}

\centering
\begin{subfigure}{.55\textwidth}
  \centering
  \includegraphics[width=.4\linewidth]{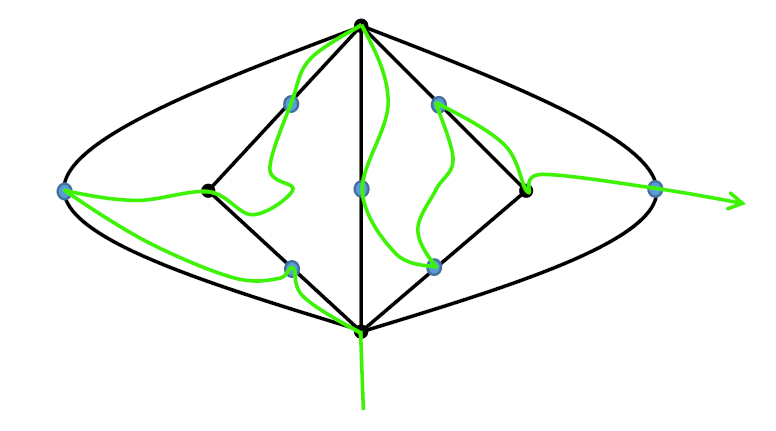}
  \caption{}
  \label{fig:alpha_a}
\end{subfigure}%
\begin{subfigure}{.5\textwidth}
  \centering
  \includegraphics[width=.6\linewidth]{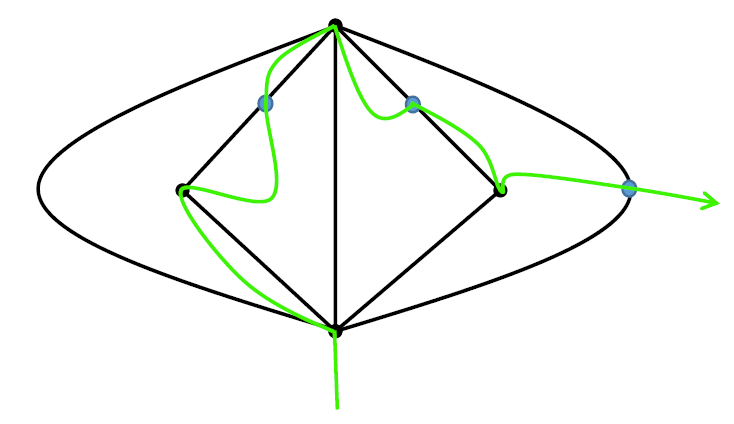}
  \caption{}
  \label{fig:alpha_b}
\end{subfigure}
\caption{(a) The subdivision $D(\Gamma)$ and the curve $\alpha$. (b) Midpoints of edges not crossed by $\alpha$ are erased and the curve $\alpha$ is adjusted.}
\label{fig:test}
\end{figure}

We shall consider the isotopy of the plane which takes the directed curve $\alpha$ to the $x$-axis directed from left to right. In accordance with this isotopy, we say that edges to the right of $\alpha$, are below it and edges to the left of $\alpha$ are above it.

%In accordance with the isotopy of the plane taking $\alpha$ to the $x$-axis which shall be used below, we say that edges to the right of $\alpha$, are \emph{below} it and edges to the left of $\alpha$ are above it.

Let $v$ be a vertex of $\Gamma'$ which is a midpoint of some edge $e$ of $\Gamma$. Then, up to reflections, the curve $\alpha$ passes through $v$ as in Figure \ref{fig:pass}.

 \begin{figure}[h!]
\centering
\includegraphics[width=.3\columnwidth]{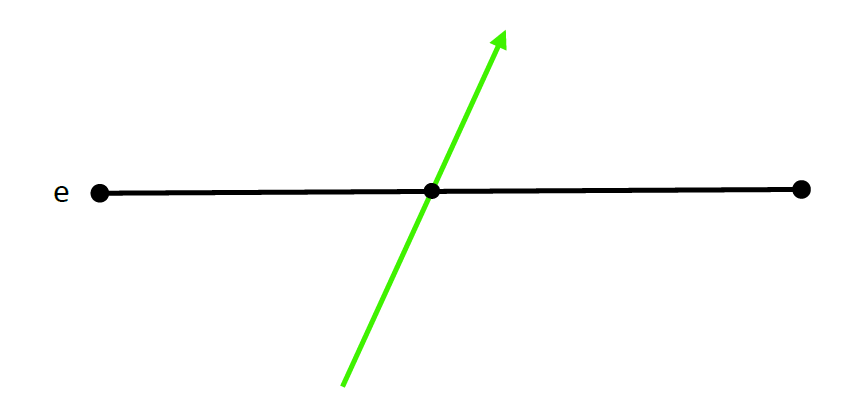}
\caption{The curve $\alpha$ crosses every edge of $\Gamma$ divided in two in $\Gamma'$.}
    \label{fig:pass}
\end{figure}

\begin{Lemma}\label{divide}
It is possible to add an additional vertex to the edge $e$ (thus, dividing it to $3$ sub-edges) and adjust the curve $\alpha$ accordingly, so as to remain with a $2$-page book embeddable graph and a suitable curve $\alpha$. Moreover, it can be done, so that $2$ of the $3$ sub-edges of $e$ would be above (below) $\alpha$ and the other one below (above) it.
\end{Lemma}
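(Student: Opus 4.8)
The plan is to read off the local structure of $\alpha$ at the vertex $v$ and then perform the inverse of the midpoint-elimination move that was used to build $\Gamma'$. By the discussion preceding the statement (see Figure \ref{fig:pass}), $v$ is a midpoint of $e$ at which $\alpha$ genuinely \emph{crosses} $e$; hence, writing $a,b$ for the two endpoints of $e$, one of the two halves of $e$ at $v$ --- say the sub-edge $e^+$ joining $v$ to $a$ --- lies above $\alpha$, while the other half $e^-$ (joining $v$ to $b$) lies below $\alpha$. To produce a $2$-page book embeddable graph with $e$ divided into three sub-edges, two above and one below, I would subdivide the upper half $e^+$; to get two below and one above I would instead subdivide $e^-$, the two cases being mirror images of one another. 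So it suffices to treat the first.

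First I would localize. Since $v$ is a midpoint of $e$, it has degree $2$ in $\Gamma'$: its only incident edges are $e^+$ and $e^-$. I would choose a small disk $B$ centered at $v$ whose interior meets no vertex of $\Gamma'$ other than $v$ and no edge of $\Gamma'$ other than $e$; inside $B$ the whole configuration consists of the arc $\alpha$ crossing the arc $e$ at $v$. Now put the new vertex $w$ on $e^+$, inside $B$, close to $v$.

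Next I would reroute $\alpha$ inside $B$ (and nowhere else) so that it passes through both $v$ and $w$: at $v$ it still crosses $e$ exactly as before, while at $w$ it merely touches $e$, with both resulting sub-arcs (from $v$ to $w$, and from $w$ to $a$) remaining above the new curve. Concretely, the new $\alpha$ runs along the old $\alpha$ up to a neighbourhood of $v$, makes a thin excursion that hugs $e^+$ up to $w$ and back, and then crosses $e$ at $v$ as before. This is precisely the inverse of the move ``we can eliminate the vertex $v$ and adjust the curve $\alpha$ accordingly'' used earlier to pass from $D(\Gamma)$ to $\Gamma'$ (indeed $w$ is a vertex at which $\alpha$ stays on the same side of $e$). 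As such it preserves $2$-page book embeddability and yields a suitable curve. Because the modification is confined to $B$, it introduces no crossing with any other edge, and the two infinite rays of $\alpha$ (which lie in the outer face, far from $B$) are untouched. The edge $e$ is now divided by $v$ and $w$ into the three sub-edges from $a$ to $w$ and from $w$ to $v$ (both above $\alpha$) and from $v$ to $b$ (below $\alpha$), as required.

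The one point that needs care, and the only real obstacle, is verifying that the rerouted $\alpha$ crosses no edge of $\Gamma'$ while still passing through every vertex; this is exactly what the localization to $B$ and the identification with the inverse elimination move buy us. Equivalently, one can phrase the whole argument combinatorially on the spine of the $2$-page embedding: removing the top-page edge from $a$ to $v$, inserting $w$ immediately next to $v$ on the $a$-side of the spine, and assigning both new edges to the top page creates no interleaving, since $v$ carries no other top-page edge and the spanned interval of spine vertices is essentially unchanged. Either formulation completes the proof, the symmetric choice of $e^-$ in place of $e^+$ giving the ``two below, one above'' variant.
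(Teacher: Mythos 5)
Your proof is correct and follows essentially the same route as the paper: the paper's proof consists entirely of the local picture in Figure \ref{adjust}, and your argument is a precise verbal rendering of that same move --- placing the new vertex $w$ on one half of $e$ near $v$ and rerouting $\alpha$ by a thin local excursion that touches $w$ without crossing $e$, i.e.\ the inverse of the midpoint-elimination move. Your closing combinatorial reformulation on the spine (inserting $w$ adjacent to $v$ and checking that no interleaving is created) is a welcome rigorous supplement to what the paper leaves to the figure.
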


\begin{proof}
The process is demonstrated in Figure \ref{adjust}.

 \begin{figure}[h!]
 \captionsetup{width=0.8\columnwidth}

\centering
\includegraphics[width=.65\columnwidth]{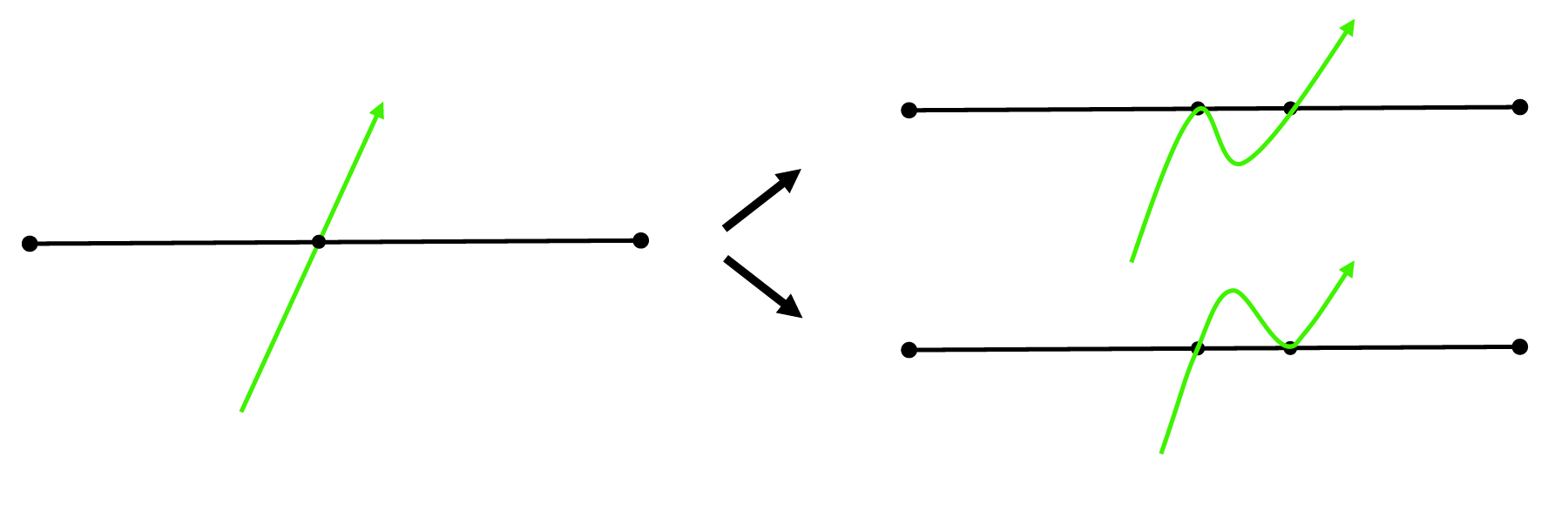}
\caption{Adding a vertex to an edge of $\Gamma$ cut in two in $\Gamma'$ and adjusting the curve $\alpha$ accordingly. In the right top (bottom) figure, two of the $3$ sub-edges of $e$ are above (below) $\alpha$ and the other one is below (above) it.}
    \label{adjust}
\end{figure}
\end{proof}

Now we consider the labels of the edges of $\Gamma$. For each edge of $\Gamma$ which is not divided in two in the transition to $\Gamma'$, we assign the corresponding edge of $\Gamma'$ its label in $\Gamma$.
If an edge $e$ of $\Gamma$ is replaced by two edges in $\Gamma'$ we do the following. If $e$ is labeled by "+" ("-") in $\Gamma$ we use Lemma \ref{divide} to replace its $2$ sub-edges in $\Gamma'$ by $3$ sub-edges and adjust the curve $\alpha$ so that two of the $3$ sub-edges are above (below) $\alpha$ and the other one is below (above) it. We label the $3$ resulting edges of $\Gamma'$ according to their position with respect to $\alpha$, where an edge above $\alpha$ is labeled by "+" and an edge below $\alpha$ is labeled by "-". We call the resulting graph $\Gamma''$. See Figure \ref{fig:3subdivision1}.

 \begin{figure}[h!]
\centering
\includegraphics[width=.5\columnwidth]{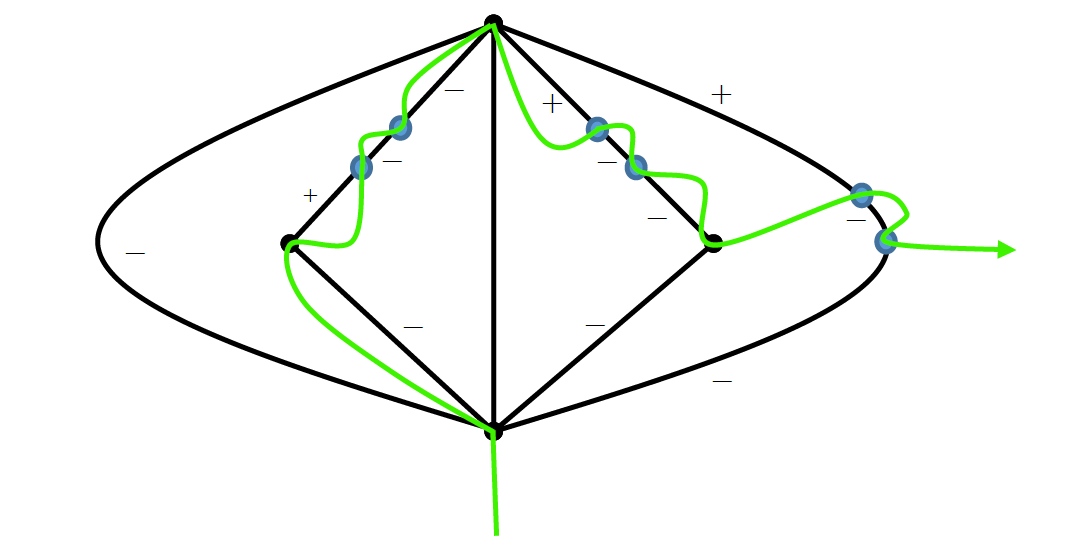}
\caption{The graph $\Gamma''$ with the curve $\alpha$.}
    \label{fig:3subdivision1}
\end{figure}

\begin{proposition}
The graph $\Gamma''$ is equivalent to the graph $\Gamma$.
\end{proposition}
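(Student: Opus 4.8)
The plan is to read off exactly how $\Gamma''$ differs from $\Gamma$ and then undo those differences one edge at a time. First I would record the following dichotomy for the edges of $\Gamma$: an edge $e$ whose midpoint (as a vertex of $D(\Gamma)$) is not crossed by $\alpha$ survives in $\Gamma''$ as a single edge carrying its original label, while an edge $e$ whose midpoint is crossed by $\alpha$ is replaced in $\Gamma''$ by a path of three sub-edges whose labels are assigned by position relative to $\alpha$. Since the equivalence relation on signed plane graphs includes planar isotopy and is generated by moves of types 1--3 together with their inverses, it is symmetric, so it suffices to transform $\Gamma''$ back into $\Gamma$. The edges of the first kind already agree with $\Gamma$ in sign, so there is nothing to do for them, and I only have to collapse each three-edge path back to the single original edge with the correct label.

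For the collapse I would work locally on one subdivided edge $e$ with endpoints $u,w$. In $\Gamma''$ it appears as a path $u-p-q-w$, where $p,q$ are the two subdivision vertices, each of degree $2$. By Lemma \ref{divide} and the labeling convention (with an edge above $\alpha$ labeled "+" and an edge below $\alpha$ labeled "-"), exactly two of the three sub-edges carry the sign $s$ of $e$ and one carries the opposite sign $\bar s$. Because the three consecutive signs are not all equal, some adjacent pair differs, so at least one of $p,q$ is a degree-$2$ vertex whose two incident edges have opposite signs, and its two far endpoints are distinct (one is a subdivision point and the other an original vertex of $\Gamma$). Hence a move of type $2$ applies at that vertex: it deletes the two edges incident to it and identifies their far endpoints, leaving a single surviving edge running between $u$ and $w$. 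Since the two deleted edges carry opposite signs, removing them from the multiset $\{s,s,\bar s\}$ leaves an edge of sign $s$, so the surviving edge has exactly the sign of $e$ in $\Gamma$.

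Doing this for every subdivided edge, and then performing the reverse planar isotopy that returns $\alpha$ (the $x$-axis) to the original embedding, carries $\Gamma''$ to $\Gamma$, which proves the equivalence. I expect the main obstacle to be the sign bookkeeping rather than the combinatorics of the move: one must verify (i) that the construction of Lemma \ref{divide} really places the majority of the three sub-edges on the side matching the original sign of $e$, and (ii) that after deleting an adjacent oppositely-signed pair the surviving sub-edge is precisely this majority sign. Both come down to the elementary fact that a move of type $2$ always removes one "+" and one "-" edge, so from $\{s,s,\bar s\}$ it must leave an edge labeled $s$; the planar realizability of each move is immediate since $u-p-q-w$ is a sub-arc of the original simple arc $e$, and the applicability conditions of move $2$ (degree two, opposite signs, distinct far endpoints) hold automatically because $p$ and $q$ are interior subdivision points of $e$.
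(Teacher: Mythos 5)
Your proof is correct and follows essentially the same route as the paper: both collapse each subdivided edge of $\Gamma''$ by a single type 2 move applied at a degree-two subdivision vertex where the two incident sub-edges have opposite signs, observing that since the three sub-edges carry signs $\{s,s,\bar s\}$ (by Lemma \ref{divide} and the labeling convention), the surviving sub-edge has the original sign $s$ of $e$. The only cosmetic difference is that the paper locates the move by first fixing an endpoint sub-edge carrying the majority sign, whereas you locate an adjacent oppositely-signed pair directly; these are the same move.
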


\begin{proof}
The graph $\Gamma''$ results from the graph $\Gamma$ by dividing some of its edges to $3$ sub-edges and assigning them labels. If $e$ is an edge of $\Gamma$ labeled by "+" in $\Gamma$ and replaced by $3$ edge in $\Gamma''$, then two of the edges replacing it are labeled by "+". Therefore, if $e_+$ and $e_-$ are the end vertices of $e$ in $\Gamma$, then either the sub-edge of $e$ incident to $e_-$ or the sub-edge of $e$ incident to $e_+$ is labeled by "+". Assume the sub-edge of $e$ incident to $e_-$ is labeled by "+". Then, applying a type 2 move on the graph $\Gamma''$, it is possible to eliminate the other two sub-edges of $e$ (which meet at a vertex of degree $2$ and have opposite signs) and identify their end vertices. We get the original edge $e$ labeled by "+". %, see Figure \ref{eliminate}.
\end{proof}

Thus we have proved the following.

\begin{Theorem}\label{thm:2page}
%Let $L$ be a link diagram and assume that the graph $\Gamma=\G(L)$ has no loops.
Let $\Gamma$ be a signed plane graph with no loops. Then, there exists a signed plane graph $\Gamma''$ such that
\begin{enumerate}
\item $\Gamma''$ is equivalent to $\Gamma$.
\item $\Gamma''$ is a subdivision of $\Gamma$ where each edge of $\Gamma$ is divided to $3$ sub-edges or not divided at all.
\item $\Gamma''$ can be $2$-page book embedded, so that for every edge $e$ of $\Gamma$ which is replaced by $3$ sub-edges in $\Gamma''$, the signs of all $3$ sub-edges in $\Gamma''$ are compatible with their embedding below or above the $x$-axis.
\end{enumerate}
\end{Theorem}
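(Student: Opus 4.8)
The plan is to assemble the chain of local constructions developed in this subsection into one argument, producing $\Gamma''$ explicitly. First I would forget the signs and treat $\Gamma$ merely as a plane graph. Since $\Gamma$ has no loops, Theorem \ref{thm:kauf} applies to the double subdivision $D(\Gamma)$ and shows it is $2$-page book embeddable; equivalently it furnishes an oriented simple curve $\alpha$ passing through every vertex of $D(\Gamma)$, crossing no edge, with both infinite rays in the outer face. This is the one place where the real combinatorial content enters, and it is already available.

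Next I would prune $\alpha$. At each midpoint vertex $v$ of an edge $e$ of $\Gamma$, the curve $\alpha$ either stays on one side of $e$ or crosses $e$ at $v$. In the former case I delete $v$ and splice $\alpha$ across the gap, which preserves $2$-page book embeddability while reducing the number of vertices. Performing this for all such midpoints yields a plane graph $\Gamma'$ in which every surviving midpoint is a genuine crossing point of $\alpha$, so that each edge of $\Gamma$ is now either undivided or cut into exactly two sub-edges straddling $\alpha$, as in Figure \ref{fig:pass}.

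Then I would reintroduce the labels and arrange condition (3). An undivided edge keeps its original sign. For a twofold-divided edge I invoke Lemma \ref{divide}, which replaces the two sub-edges by three and reroutes $\alpha$ so that two of the three sub-edges lie on one side of $\alpha$ and the third on the other. If the original edge is labeled "+" I place two sub-edges above $\alpha$ and one below; if "-" I do the mirror image; and I label each sub-edge by its side (above $\mapsto$ "+", below $\mapsto$ "-"). The resulting signed graph $\Gamma''$ is by construction a subdivision of $\Gamma$ in which each edge is split into three or not at all, giving condition (2), and its book embedding is sign-compatible, giving condition (3). Equivalence to $\Gamma$, which is condition (1), then follows from the Proposition just proved: on any trisected edge exactly two of the three consecutive sub-edges carry opposite signs and meet at a degree-two vertex whose two edges do not share their far endpoints, so a single type~2 move collapses them and restores the original edge with its original sign.

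I expect the main obstacle to be not any individual move but the simultaneous bookkeeping of the three operations: one must check that deleting the non-crossing midpoints and then trisecting the crossing edges never destroys the embedding inherited from Theorem \ref{thm:kauf}, and never forces a sign assignment incompatible with the above/below convention. The delicate point is precisely that Lemma \ref{divide} must be applicable at \emph{every} crossing midpoint, regardless of the local picture (up to reflection), and that it leaves us the freedom to choose the ``two-and-one'' split on whichever side the sign demands; this freedom is exactly what reconciles the geometric constraint (edges above are "+", below are "-") with the prescribed labels of $\Gamma$, so once Theorem \ref{thm:kauf} and Lemma \ref{divide} are in hand the theorem is an orchestration of these adjustments.
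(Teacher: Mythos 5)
Your proposal is correct and follows essentially the same route as the paper: Theorem \ref{thm:kauf} applied to $D(\Gamma)$, pruning the midpoints not crossed by $\alpha$, trisecting the remaining edges via Lemma \ref{divide} with the ``two-and-one'' split chosen to match the original sign, and verifying equivalence by collapsing the two opposite-sign sub-edges with a type 2 move. No gaps to report.
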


%For the link $L$ from Figure \ref{fig:link1} a $2$-page book embedding of the graph $\Gamma''$ appears in Figure \ref{2page}. All the unlabeled edges in this and following figures have labels compatible with their position above or below the $x$-axis.
%
 %\begin{figure}[h!]
%\centering
%\includegraphics[width=.65\columnwidth]{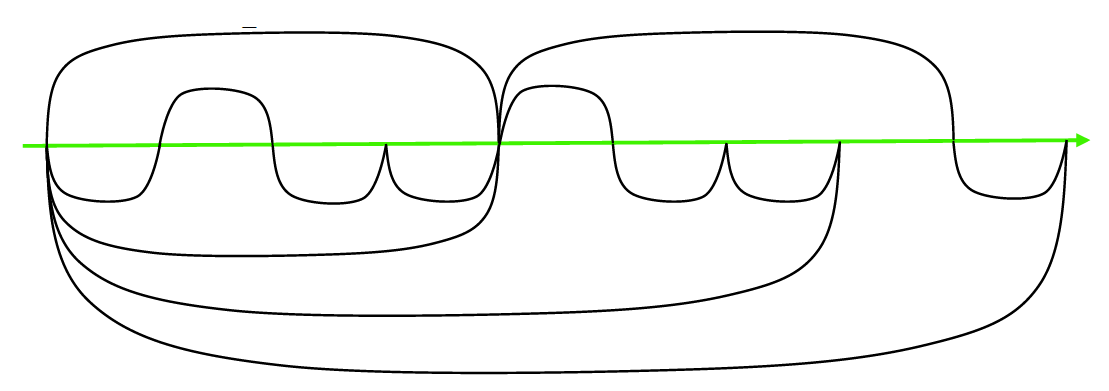}
%\caption{A $2$-page book embedding of the graph $\Gamma''$.}
    %\label{2page}
%\end{figure}

\subsection{Step 3: From $2$-page book embeddable graphs to Thompson graphs}

Let $\Gamma$ be a signed plane $2$-page book embeddable graph with no loops. We use an isotopy which takes all the vertices of $\Gamma$ to the $x$ axis and each edge of $\Gamma$ to an edge above or below the $x$-axis. Following \cite{Jo} we call the resulting graph \emph{standard}.

The image of $\Gamma''$ from Figure \ref{fig:3subdivision1} under the isotopy which takes $\alpha$ to the $x$-axis (directed from left to right) appears in Figure \ref{2page}. All the unlabeled edges in the figure have labels compatible with their position above or below the $x$-axis. The graph in the figure is standard.

Given a standard graph we enumerate its vertices from left to right by $0,1,\dots$. The \emph{inner vertices} of a standard graph are all the vertices except for vertex number $0$.

%That is, all the vertices of $\Gamma$ are on the $x$-axis and each edge is either above or below the $x$-axis. We call such a graph \emph{standard}.

%For the link $L$ from Figure \ref{fig:link1} a $2$-page book embedding of the graph $\Gamma''$ appears in Figure \ref{2page}. All the unlabeled edges in this and following figures have labels compatible with their position above or below the $x$-axis.

 \begin{figure}[h!]
\centering
\includegraphics[width=.65\columnwidth]{}
\caption{A $2$-page book embedding of the graph $\Gamma''$.}
    \label{2page}
\end{figure}

\begin{definition}
Let $\Gamma$ be a standard graph. We orient all the edges of $\Gamma$ from left to right. The graph  $\Gamma$ is called \emph{Thompson}
 if every inner vertex of $\Gamma$ has exactly one upper incoming edge (i.e., an edge above the $x$-axis) and one lower incoming edge (i.e., an edge below the $x$-axis) and in addition all the upper edges in $\Gamma$ are labeled by "+" and all the lower edges in $\Gamma$ are labeled by "-".
\end{definition}

%Note that a standard graph is Thompson iff it is the Thompson graph $T(\Delta)$ of some $(1,1P)$-diagram $\Delta$ over $\la x:x=x^2\ra$ with no $\pi\iv\circ\pi$ dipoles. Thus, by remark \ref{reduced_link}, if a link $L$ corresponds to a standard Thompson graph, it is represented by some reduced diagram in Thompson group $F$.

\begin{Lemma}\cite[Lemma 5.3.3]{Jo}\label{process}
Let $\Gamma$ be a standard graph. Then $\Gamma$ is equivalent to a Thompson graph.
\end{Lemma}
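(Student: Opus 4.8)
The plan is to prove the statement by reducing any standard graph that is not yet Thompson, via the three local moves, to an equivalent standard graph of strictly smaller complexity, so that the process must terminate at a Thompson graph. First I would reformulate the Thompson condition structurally. Orienting every edge from left to right, the requirement ``every inner vertex has exactly one incoming upper edge'' says that the upper edges define a parent map $p$ on the inner vertices with $p(i)<i$; having no cycles, this makes the upper edges a spanning tree rooted at vertex $0$, and symmetrically for the lower edges. Together with the sign requirement, being Thompson is precisely the statement that the arcs above the $x$-axis form such a rooted ``one parent to the left'' tree with all labels ``$+$'', and the arcs below do the same with all labels ``$-$''. In our situation $\Gamma$ arises from Step 2, so by Theorem \ref{thm:2page} its signs are already compatible with the sides, i.e. upper arcs carry ``$+$'' and lower arcs carry ``$-$''; thus the remaining task is the incidence condition, and I will only ever invoke moves of types $2$ and $3$, which are defined for \emph{opposite-sign} pairs, so sign-compatibility is automatically preserved throughout. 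The crucial geometric input is that, since the embedding is planar, the upper arcs are pairwise non-crossing and hence form a laminar (nested) family, and likewise for the lower arcs.

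Second I would run the reduction by induction on the number of edges, exploiting the reflection across the $x$-axis (which swaps upper with lower and ``$+$'' with ``$-$'', hence maps standard graphs, Thompson graphs, and moves to themselves) to treat the two sides symmetrically. If $\Gamma$ has a degree-one inner vertex, a type $1$ move removes it and its edge. If two arcs of opposite sign join the same pair of vertices with nothing between them, a type $3$ move erases both. The main work is an inner vertex $v$ carrying two incoming upper edges (dually, two incoming lower edges): here I would pass to the \emph{innermost} such arc at $v$, and use the laminar structure to locate, inside the region it bounds together with the arc nested immediately outside it, a vertex of degree two whose two incident arcs necessarily have opposite signs (one upper ``$+$'', one lower ``$-$'' by sign-compatibility). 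A type $2$ move smoothing that degree-two vertex then deletes an edge, lowering the number of incoming upper edges at $v$. Each of these moves strictly decreases the edge count, so the reduction terminates; when it halts, no inner vertex has two incoming edges on either side, and a parity/balance count forces each inner vertex to have \emph{exactly} one incoming upper ``$+$'' edge and one incoming lower ``$-$'' edge, which is exactly the Thompson condition.

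The step I expect to be the main obstacle is the middle one: showing that whenever the incidence condition fails there is genuinely a move available, and organizing the reductions so that repairing one vertex does not create a worse defect elsewhere. The resolution rests entirely on the non-crossing (laminar) structure of the arcs, which guarantees the existence of an \emph{innermost} offending arc, and an innermost offending arc always sits either in an empty-bigon configuration (type $3$ applies) or against a degree-two, opposite-sign vertex (type $2$ applies). To make termination airtight I would fix a lexicographic complexity measure on standard graphs, for instance the number of edges followed by the total nesting depth of the two laminar families, and check that each move described above strictly lowers it while keeping the graph standard; the inductive hypothesis then finishes the argument, yielding a Thompson graph equivalent to $\Gamma$.
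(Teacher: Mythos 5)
Your proposal has two genuine gaps, and the second one is fatal to the reduction-only strategy.

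First, the sign-compatibility assumption is unjustified. The lemma is stated for an \emph{arbitrary} standard graph, and even for graphs produced by Step 2 the premise fails: Theorem \ref{thm:2page} only guarantees compatible signs for those edges of $\Gamma$ that were subdivided into three sub-edges; edges that were not subdivided keep their original labels, which are dictated by the crossing types of the link and need not match the side of the $x$-axis on which they end up (this is exactly why the paper needs its Step 3.4, the most expensive step, which ``moves'' an incompatibly labeled edge to the other side at the cost of $8$ new vertices). By assuming compatibility you have silently discarded the hardest case, and with it the only situation in which your restriction to type 2 and type 3 moves would even be tested.

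Second, and more seriously, a strictly decreasing reduction cannot terminate at a Thompson graph, because defects of \emph{deficiency} (an inner vertex with no incoming edge, or with an incoming edge on only one side of the axis) cannot be repaired by deleting material: the moves preserve the link, so, for instance, an isolated inner vertex (an unknot component) can never be deleted; it can only be fixed by an \emph{inverse} type 3 move that adds an opposite-sign bigon, which increases your complexity measure. A concrete counterexample, with all signs compatible: take vertices $0,1,2$ with upper arcs $(0,1),(1,2),(0,2)$ labeled $+$ and lower arcs $(0,1),(1,2),(0,2)$ labeled $-$. This is standard but not Thompson (vertex $2$ has two incoming arcs on each side). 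The only reductions available are the type 3 moves on the empty bigons over $(0,1)$ and $(1,2)$; performing them leaves the outer bigon on $(0,2)$ with vertex $1$ isolated inside it. At that point no move applies: type 3 fails on the remaining bigon because vertex $1$ lies between the two arcs, type 2 fails at $0$ and $2$ because their two incident arcs share both endpoints, and type 1 needs a degree-one vertex. So your process halts at a non-Thompson graph, and the concluding ``parity/balance count'' is simply false. The paper's proof goes in the opposite direction: it classifies the four kinds of defects and repairs each by \emph{adding} good vertices and edges via (inverse) moves --- Step 3.1 adds an opposite-sign bigon at a vertex with no incoming edges, Step 3.2 adds a vertex supplying the missing upper or lower incoming edge, Step 3.3 splits a vertex by a type 2 move to absorb a superfluous edge, Step 3.4 handles incompatible labels --- and it terminates because everything added in the process is already good. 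Since the Thompson graph equivalent to $\Gamma$ is in general strictly larger than $\Gamma$, no argument by induction on a decreasing complexity measure of the kind you propose can reach it.
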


\begin{proof}
The proof follows closely the proof of \cite[Lemma 5.3.3]{Jo}. We say that a vertex is \emph{good} if it has at least one upper and one lower incoming edge. We say that an edge is \emph{good} if its label is compatible with its position with respect to the $x$-axis and it is not \emph{superfluous}.  An upper (lower) edge is said to be superfluous if it is not the bottom-most (top-most) incoming upper (lower) edge of its right end vertex.
%We enumerate the vertices of $\Gamma$ along the $x$-axis from left to right by 0,1,2...
%We say that a vertex of $\Gamma$ is inner, if it is not vertex number 0.
%We say that a vertex is \emph{good} if it has exactly one upper incoming edge labeled by "+" and one lower incoming edge labeled by "-".
If $\Gamma$ is not Thompson, it has a bad vertex or a bad edge (where bad is the opposite of good). Thus, at least one of the following occurs in $\Gamma$.
\begin{enumerate}
\item There is an inner vertex in $\Gamma$ with no incoming edges.
\item There is an inner vertex in $\Gamma$ with no upper (lower) incoming edge but with a lower (upper) incoming edge.
\item There is a superfluous edge in $\Gamma$. %(hence, there exists a vertex with more than one upper or lower incoming edge).
\item There is an edge in $\Gamma$ with a label non-compatible with its position. %upper (lower) edge in  $\Gamma$ labeled by "-" ("+").
\end{enumerate}
We turn $\Gamma$ into a Thompson graph in $4$ steps, taking care of each of the above problems in turn. All the edges and vertices added during this process are good. In all the figures in this proof (namely, Figures \ref{fig:1}-\ref{fig:4}), all the unlabeled black edges can have arbitrary signs. The non-black edges (i.e., the edges attached during this process) have signs compatible with their position. % with respect to the $x$-axis.

Step 3.1. If an inner vertex $i$ has no incoming edges, we connect the vertices $(i-1)$ and $i$ by two edges. One upper edge labeled by "+" and one lower edge labeled by "-" as done in Figure \ref{fig:1}. We do it using a type $3$ move. The vertex $i$ becomes good.

  \begin{figure}[h!]
\centering
\includegraphics[width=.7\columnwidth]{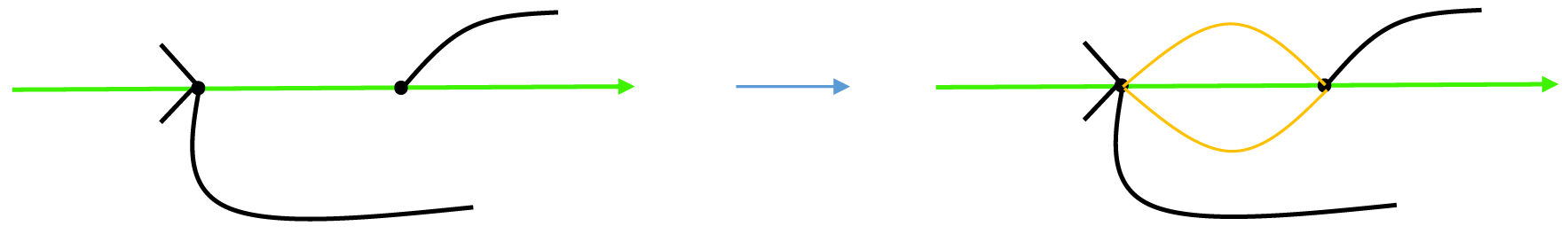}
\caption{Turning a vertex with no incoming edges to a good vertex.}
    \label{fig:1}
\end{figure}

Applying this step to all the relevant vertices, we can assume that every inner vertex has at least one incoming edge.

Step 3.2. Let $i$ be an inner vertex with no lower incoming edge (the case where $i$ has no upper incoming edge is similar). To correct the problem we apply a move of type 1 followed by a move of type 3 as demonstrated in Figure \ref{fig:2} (the vertices in the left figure are the vertices $(i-1)$ and $i$). One new vertex is added in the process.

  \begin{figure}[h!]
\centering
\includegraphics[width=.88\columnwidth]{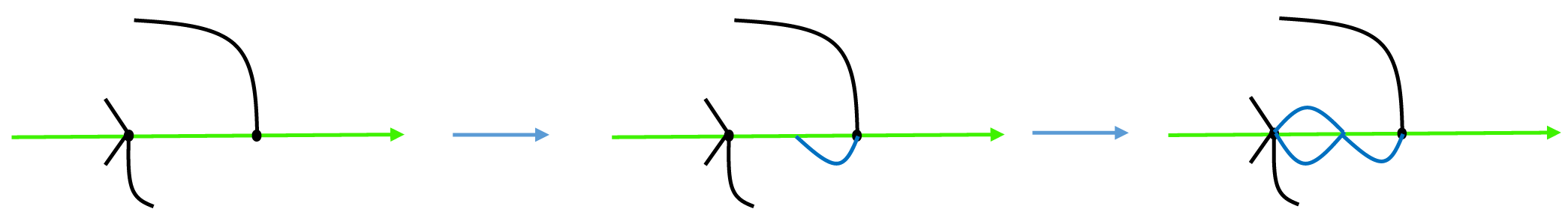}
\caption{Adding an incoming lower edge.}
    \label{fig:2}
\end{figure}

After several applications of this step, we can assume that all vertices have incoming upper and lower edges and are thus good.

Step 3.3. Let $e$ be a superfluous upper edge in $\Gamma$ (the case of a superfluous lower edge is similar). We assume that $e$ is the top-most upper incoming edge of its right end vertex $v$. We apply a move of type 2 to separate the vertex $v$ to two vertices connected by a path of two edges with opposite signs. All the incoming edges of $v$, other than $e$, remain connected to its left copy (which we consider to be the vertex $v$). The edge $e$ as well as the outgoing edges of $v$ are connected to the right copy of $v$ (which we count as a new vertex). Then we apply a move of type 1 followed by a move of type 3 (as in Step 3.2). The process is demonstrated in Figure \ref{fig:3}. All the vertices after this step are good. All the new edges are good and the edge $e$ is no longer superfluous. %The edge below $e$ became the top-most upper incoming edge of $v$.
$3$ new vertices are added in this step.

  \begin{figure}[h!]
\centering
\includegraphics[width=.85\columnwidth]{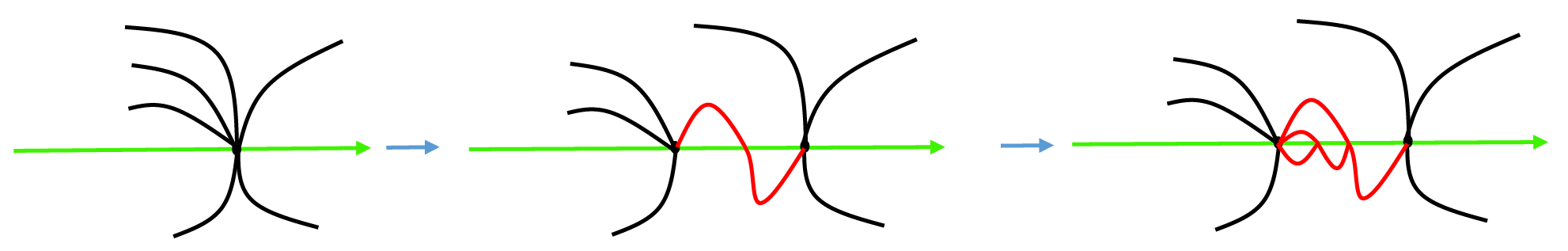}
\caption{Fixing a superfluous edge.}
    \label{fig:3}
\end{figure}

After several iterations of this step we can assume that all vertices are good and there are no superfluous edges.

Step 3.4. Let $e$ be an edge with a label incompatible with its position. We assume that $e$ is an upper edge labeld by "-", the other case being similar. We fix the problem as demonstrated in Figure \ref{fig:4}. We apply two moves of type 2 (see the second figure from the left), then apply an isotopy which turns $e$ to a lower edge, followed by several moves of type 1 and 3 as in Step 3.2. $8$ new vertices are added in this step.

  \begin{figure}[h!]
\centering
\includegraphics[width=.85\columnwidth]{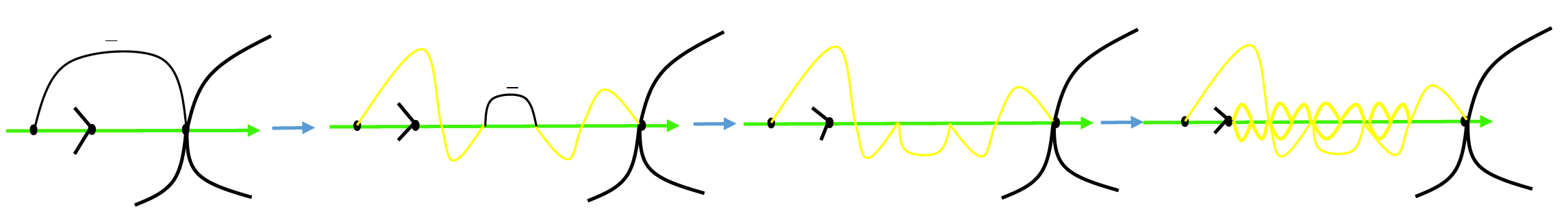}
\caption{Moving a "-" edge down.}
    \label{fig:4}
\end{figure}

After applying this step to every edge with a non compatible label, we get a graph where all the vertices and all the edges are good. This is a Thompson graph and it is clearly equivalent to the original graph $\Gamma$.
\end{proof}

\begin{Example}
We demonstrate the process of turning a standard graph to an equivalent Thompson graph on the graph $\Gamma''$ from Figure \ref{2page}. Since every vertex in the graph has at least one incoming edge, step 3.1 from the proof of Lemma \ref{process} is not necessary. Steps 3.2, 3.3 and 3.4 are depicted in Figures \ref{step2}, \ref{step3} and \ref{step4}. All the unlabeled edges in the figures have signs compatible with their positions. The graph in Figure \ref{step4} is Thompson. It is equivalent to $\Gamma''$.

 \begin{figure}[h!]
\centering
\includegraphics[width=.65\columnwidth]{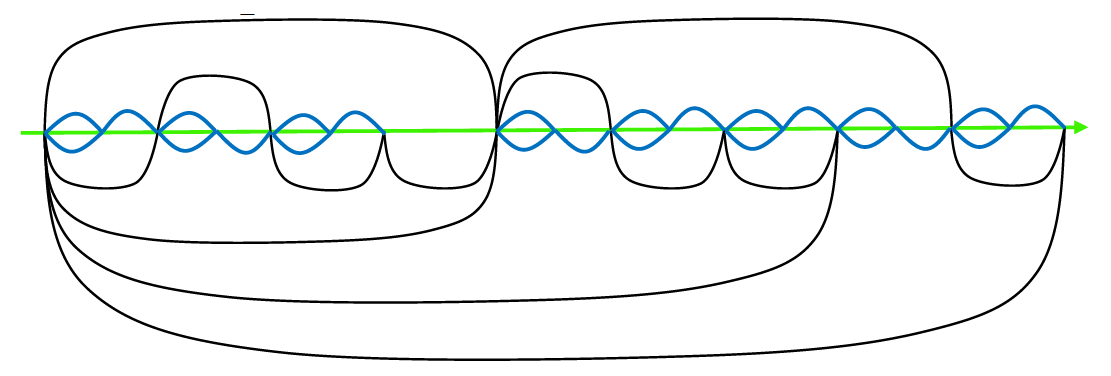}
\caption{The graph after an application of Step 3.2.}
    \label{step2}
\end{figure}

  \begin{figure}[h!]
\centering
\includegraphics[width=.7\columnwidth]{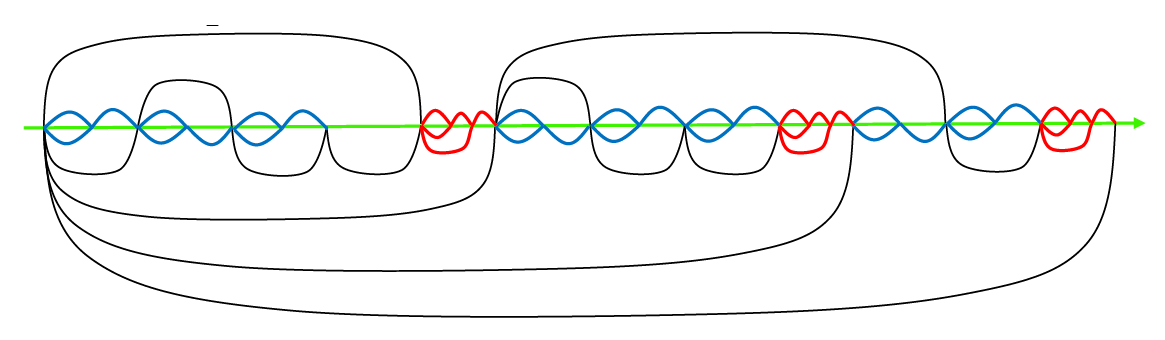}
\caption{The graph after an application of Step 3.3.}
    \label{step3}
\end{figure}

 \begin{figure}[h!]
\centering
\includegraphics[width=.8\columnwidth]{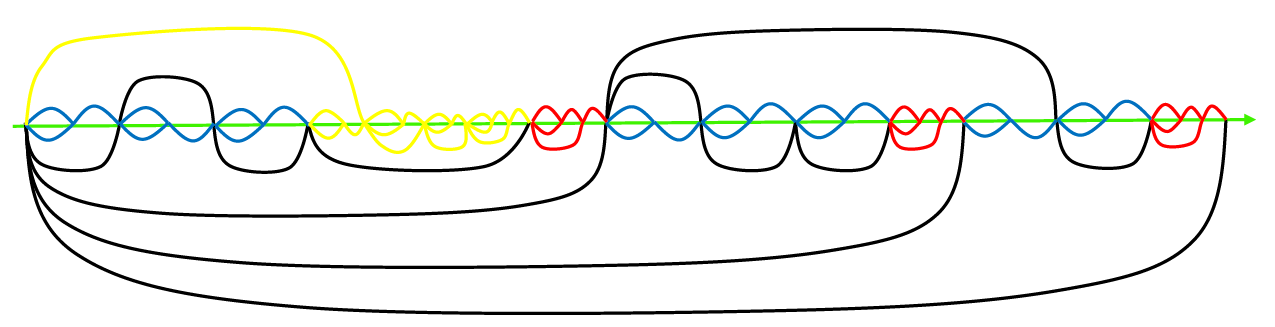}
\caption{The graph after an application of Step 3.4.}
    \label{step4}
\end{figure}

\end{Example}

%Let $L$ be a link diagram. We call a connected component of $L$ a \emph{simple distant unknot} if it is an embedding of a circle as a simple closed curve (i.e., if it is an embedding of the unknot with no crossings).

\begin{Lemma}\label{l:cv}
Let $L$ be a link which contains $u$ unlinked unknots. Assume that $L$ is represented by a link diagram with $n$ crossings. Then, there exists a Thompson graph $\Gamma$ with at most $12n+u$ vertices such that $\La(\Gamma)$ is equal to $L$.
\end{Lemma}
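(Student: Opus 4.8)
The plan is to run the three-step pipeline of Section \ref{s:6} on the given diagram of $L$ and to bookkeep the number of vertices created at each step. First I would normalize: by Remark \ref{no_loops} we may assume the signed plane graph $\G(L)$ has no loops and comes from a diagram with at most $n$ crossings, so that $\G(L)$ has at most $n$ edges (one per crossing). The key observation about the unknots is that each of the $u$ unlinked unknots, being a simple closed curve with no self-crossings lying in the white unbounded region, contributes exactly one isolated (degree-zero) gray vertex to $\G(L)$, and these are the only isolated vertices; all $\le n$ edges live in the remaining part of the graph.

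Next I would apply Theorem \ref{thm:2page} to replace $\G(L)$ by an equivalent signed plane graph $\Gamma''$ in which every edge of $\G(L)$ is divided into at most $3$ sub-edges, so that $\Gamma''$ has at most $3n$ edges, and which is $2$-page book embeddable with the signs of all subdivided edges compatible with the embedding. Passing to the standard graph via the isotopy taking the curve $\alpha$ to the $x$-axis leaves the $u$ isolated vertices untouched. At this point the whole problem is reduced to counting how many vertices the Thompson-ification of Lemma \ref{process} introduces.

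Finally I would run Steps 3.1--3.4 of the proof of Lemma \ref{process} and track vertices. Step 3.1 repairs each of the $u$ isolated vertices by a single type-$3$ move and creates no new vertex, which is exactly where the additive $+u$ comes from. Steps 3.2, 3.3 and 3.4 repair the remaining defects (a vertex missing an incoming edge of one sign, a superfluous edge, and an edge whose sign is incompatible with its side) by adding $1$, $3$, and $8$ vertices respectively; crucially, the steps are performed in this order and every edge or vertex created along the way is already good, so each step only has to process the defects present originally. The counting I would then carry out bounds the number of defects of each type in terms of the edges of $\Gamma''$ (hence in terms of $n$) and assembles them so that the non-unknot part of the final Thompson graph has at most $12n$ vertices, the suggestive decomposition being $12 = 1 + 3 + 8$.

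Correctness of the output is immediate from the machinery already set up: every move of types 1--3 is reversible and realizes a Reidemeister move together with a possible separation of components, so the resulting Thompson graph $\Gamma$ is equivalent to $\G(L)$, and by the equivalence lemma preceding Remark \ref{no_loops} the link diagram $\La(\Gamma)$ represents the same link $L$. The main obstacle is the precise constant: one must show that the original gray-region vertices and the subdivision vertices of $\Gamma''$ are \emph{absorbed} into the per-crossing count rather than added on top of it, and that the number of applications of each of Steps 3.2--3.4 is controlled by the number of crossings. Forcing these contributions to total exactly $12n$ (rather than merely $O(n)$) is the delicate part of the bookkeeping; the rest of the argument is routine.
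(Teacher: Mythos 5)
Your pipeline is structurally the same as the paper's, but the entire mathematical content of this lemma is the vertex bookkeeping that you explicitly defer, and the one indication you give of how it should go --- the ``suggestive decomposition $12=1+3+8$'' --- is not how the count can be made to work. The paper's accounting is as follows. Write $m$ for the number of edges of $\Gamma=\G(L')$ that get trisected in $\Gamma''$ (Theorem \ref{thm:2page}). Then $|V(\Gamma'')|\le n+2m$ (this already needs an argument: it uses $|V(\Gamma)|\le |E(\Gamma)|=n$, which holds because $\Gamma$ is connected and not a tree; trees and disconnected diagrams are treated separately). Step 3.2 adds $x$ vertices, Step 3.3 adds at most $3(n+2m-x)$, Step 3.4 adds at most $8(n-m)$, and the grand total is $12n-2x\le 12n$. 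Three ingredients make this come out to exactly $12n$, and none of them appears in your sketch: (i) only the $n+2m$ \emph{original} edges of $\Gamma''$ can be superfluous, and --- this is the offset that pays for Step 3.2 --- each of the $x$ vertices repaired in Step 3.2 has an original incoming edge whose extremal (bottom-most or top-most) representative is certified \emph{not} superfluous, which is why Step 3.3 costs $3(n+2m-x)$ rather than $3(n+2m)$; without the $-x$ one only gets $12n+x$; (ii) by item (3) of Theorem \ref{thm:2page}, only the $n-m$ edges of $\Gamma$ that were \emph{not} trisected can carry an incompatible label, so Step 3.4 costs $8(n-m)$ rather than $8$ times the number of all edges; (iii) the $m$-terms then cancel exactly: $2m+6m-8m=0$. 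The correct per-crossing heuristic is $12=1+11$, where the $1$ is the vertex of $\Gamma$ charged to each edge via connectivity and $11=2+3\cdot 3$ for a trisected edge, respectively $11=3+8$ for an untrisected one --- not $1+3+8$. A naive bound of the kind you propose (each defect type bounded separately by the number of edges of $\Gamma''$, plus the vertices of $\Gamma''$ on top) yields a constant strictly larger than $12$.

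There is also a gap in your treatment of the $u$ unknots. You assume each unlinked unknot appears as an isolated gray vertex of $\G(L)$, but an unknot component of $L$ need not be drawn without crossings in the given diagram, so it may instead appear as a tree (or worse) in $\G(L)$; and once you keep these components inside one disconnected graph, the inequality $|V(\Gamma)|\le |E(\Gamma)|$ underlying the ``$1$'' in $12=1+11$ fails for tree and isolated components. This can be repaired (each such component is itself an unlinked unknot, so the excess is at most $+1$ per unknot, recovering $12n+u$ provided you also have the three ingredients above), but you would have to say so. The paper sidesteps the issue entirely: it proves the $12n$ bound for a connected diagram of a single non-split piece, disposes of the tree case separately (a tree is equivalent by type 1 moves to a one-vertex Thompson graph), and then glues the Thompson graphs of unlinked pieces side by side, identifying the right-most vertex of one with the left-most vertex of the next; since gluing adds no vertices and each unknot contributes one vertex, the bounds add up to $12n+u$.
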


\begin{proof}
We shall assume that the link $L$ consists of a single component which cannot be separated into several unlinked components (the general case will be addressed later below). Let $L'$ be a link diagram which represents $L$. By assumption, $L'$ is connected. Also, by Remark \ref{no_loops} we can assume that $\Gamma=\G(L')$ has no loops.
Since every edge of $\Gamma$ corresponds to a crossing in $L'$, we have $|E(\Gamma)|=n$.
If $\Gamma$ is a tree, then it is equivalent (by means of type 1 moves) to a Thompson graph composed of a single vertex. This Thompson graph clearly satisfies the conclusion of the lemma. Thus, we can assume that $\Gamma$ is not a tree. $L'$ being connected implies that $\Gamma$ is connected, therefore $|V(\Gamma)|\le |E(\Gamma)|=n$.
Let $\Gamma''$ be the $2$-page book embeddable graph equivalent to $\Gamma$, from Theorem \ref{thm:2page} and consider its embedding defined by the curve $\alpha$ from the theorem.

Let $m$ be the number of edges in $\Gamma$ which are divided to $3$ edges in $\Gamma''$.
Then $|E(\Gamma'')|=n+2m$ whereas $|V(\Gamma'')|\le n+2m$.

Now we apply the process explained in Lemma \ref{process} to the standard graph $\Gamma''$ and consider the number of additional vertices at each step. In the analysis we distinguish between the original vertices and edges of $\Gamma''$  and those added during the process.

In Step 3.1, no new vertices are added.

Assume that $x$ vertices are dealt with  in Step 3.2. For each of these vertices one new vertex is added. Thus, $x$ new vertices are created in this step. All of them are good.

In Step 3.3 we deal with superfluous edges. Every edge added during the process is good. Hence only the $n+2m$ original edges of $\Gamma''$ can be superfluous. Each of the $x$ vertices dealt with in the second step has an original edge of $\Gamma''$ as an incoming edge (otherwise it would have been dealt with in Step 3.1). If that edge is upper (lower), then all the upper (lower) incoming edges of the vertex are original edges of $\Gamma''$. The bottom-most (top-most) of them is not superfluous. Hence at least $x$ of the original edges of $\Gamma''$ are not superfluous. Thus dealing with at most $(n+2m-x)$ edges results in at most $3(n+2m-x)$ additional vertices.

Finally, in Step 3.4 we "move" edges whose labels are incompatible with their positions above or below the $x$-axis. Only the original edges of $\Gamma''$ which are edges of $\Gamma$ (as opposed to sub-edges of edges of $\Gamma$) can have incompatible labels. Moving  each of these edges requires the addition of $8$ new vertices. Thus, at most $8(n-m)$ new vertices are added in this step.

The total number of vertices in the resulting Thompson graph is at most $n+2m+x+3(n+2m-x)+8(n-m)\le 12n$, as requested.

Now, in general, let $L$ be a link composed of two unlinked components $L_1$ and $L_2$. Let $L'$ be the link diagram of $L$. We can assume that $L'$ has two connected components $L_1'$ and $L_2'$ corresponding to $L_1$ and $L_2$. Indeed, separating unlinked components of a link diagram can be done without increasing the number of crossings in the diagram. Let $\Gamma_1$ and $\Gamma_2$ be the Thompson graphs of $L_1$ and $L_2$, which satisfy the conclusion of the lemma with respect to the number of crossings and unlinked unknots in $L_1'$ and $L_2'$. Then, drawing $\Gamma_2$ to the right of $\Gamma_1$ and attaching the left-most vertex of $\Gamma_2$ to the right-most vertex of $\Gamma_1$ results in a Thompson graph $\Gamma_3$ corresponding to the link $L$. Since no vertices are added in this construction and the upper bound $12n+u$ is linear, it holds in the general case as well.
\end{proof}

\subsection{Step 4: From Thompson graphs to elements of $F$}\label{ss:tf}

Let $\Gamma$ be a standard graph which is Thompson. As an unlabeled graph, $\Gamma$ is the Thompson graph $T(\Delta)$ of some $(x,x)$-diagram $\Delta$ over the semigroup presentation $\la x\mid x=x^2\ra$.
Indeed, considering the edges of $\Gamma$ to be the left edges of $\Delta$ (in accordance with Remark \ref{left}), it is possible to complete it to a diagram $\Delta$ by attaching suitable right edges. The diagram $\Delta$ does not have to be reduced but it has no $\pi\iv\circ\pi$ dipoles.

In particular, we can finally conclude that the link $L$ from Figure \ref{fig:link1} corresponds to the following element of Thompson group $F$:

$$\begin{array}{l}(x_0x_1x_2^2x_4x_5x_6x_{18}^2x_{20}x_{21}x_{22}x_{23}x_{24}x_{25}x_{26}x_{27}) \\
\hskip .5 in ( x_0^4x_2x_3x_4^2x_6^2x_7x_8^2x_{10}^2x_{12}^2x_{15}^2x_{18}x_{19}x_{20}^2x_{22}^2x_{24}^2x_{27}x_{28}x_{29}^2x_{31}^2 )\iv.\end{array}$$

\subsection{The Thompson index of a link}

Given a diagram $\Delta$ over $\la x\mid x=x^2\ra$, we define the signed plane graph $\Gamma(\Delta)$ to be the Thompson graph $T(\Delta)$ with all the upper edges labeled by "+" and all the lower edges labeled by "-". We say that $\Delta$ \emph{represents} the link $L$ if $\G(L)=\Gamma(\Delta)$.

\begin{Lemma}\label{reduced_link}
Let $\Delta$ be an $(x,x)$- diagram over the semigroup presentation $\la x\mid x=x^2\ra$ such that $\Delta$ has no dipoles of the form $\pi\iv\circ\pi$.
\begin{enumerate}
\item If $\Delta$ is not reduced then the link it represents $L=\La(\Gamma(\Delta))$ has an unlinked unknot as a component.
\item If $\Delta'$ results from $\Delta$ by reducing a $\pi\circ\pi^{-1}$ dipole, then the link it represents $L'=\La(\Gamma(\Delta'))$ results from $L$ by the removal of one unlinked unknot.
\end{enumerate}
\label{dipole_link}
\end{Lemma}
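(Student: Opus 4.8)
The plan is to reduce both statements to a single local analysis of what a $\pi\circ\pi\iv$ dipole contributes to the signed Thompson graph $\Gamma(\Delta)$, and then read off the link via $\La$. Write the dipole on the vertices $\iota,m,\tau$, where $\pi$ is its positive (top) cell, $m$ is the bottom vertex of $\pi$, and $\tau$ its terminal vertex; its four edges are the top arc $a=(\iota,\tau)$, the two middle edges $b=(\iota,m)$ and $c=(m,\tau)$, and the bottom arc $d=(\iota,\tau)$. The first step is to determine which of these lie in $T(\Delta)$. The vertex $m$ has $b$ as its unique incoming edge, so $b$ is simultaneously the top-most and bottom-most incoming edge of $m$ and therefore appears in $T(\Delta)$ twice, once as an upper (``$+$'') edge and once as a lower (``$-$'') edge (Definition \ref{def:d}). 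On the other hand $c$ is trapped strictly between the arcs $a$ and $d$, so it can never be the top-most or bottom-most incoming edge of $\tau$, whatever the rest of $\Delta$ looks like; hence $c\notin T(\Delta)$. Comparing with the reduced diagram $\Delta'$ (in which the whole dipole collapses to a single edge $\iota\to\tau$), this is precisely the local picture of the first case in the proof of Lemma \ref{l:1}: $\Gamma(\Delta)$ is obtained from $\Gamma(\Delta')$ by attaching one new vertex $m$ to $\iota$ by two parallel edges of opposite signs, which bound an empty bigon.

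The second step converts this into a statement about links. Since the bigon at $m$ contains no other vertex or edge, a move of type $3$ erases its two opposite-sign edges and turns $m$ into an isolated vertex, leaving $\Gamma(\Delta')$ otherwise untouched; thus $\Gamma(\Delta)$ is equivalent to the disjoint union $\Gamma(\Delta')\sqcup\{m\}$. By the lemma relating equivalent signed plane graphs to the same link, $L=\La(\Gamma(\Delta))$ and $\La(\Gamma(\Delta')\sqcup\{m\})$ are diagrams of the same link. An isolated vertex is a gray region carrying no crossing on its boundary, so under the checkerboard construction of $\La$ it produces a single round circle disjoint from everything else; and $\La$ sends a union of far-apart graphs to the split union of the corresponding links. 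Hence $\La(\Gamma(\Delta')\sqcup\{m\})=L'\sqcup U$, where $L'=\La(\Gamma(\Delta'))$ and $U$ is an unlinked unknot. This is exactly statement $(2)$. Statement $(1)$ then follows at once: a non-reduced $\Delta$ with no $\pi\iv\circ\pi$ dipole must contain a $\pi\circ\pi\iv$ dipole, and applying the above to it exhibits an unlinked unknot in $L$.

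The step I expect to require the most care is the local claim that $c\notin T(\Delta)$ and that the resulting bigon is genuinely empty, since a priori the dipole may be embedded in $\Delta$ in complicated ways; the point to emphasize is that being ``trapped between $a$ and $d$'' is a purely local obstruction to $c$ ever becoming extreme at $\tau$, and that the emptiness of the bigon (no vertex sits between $\iota$ and $m$) is forced by $m$ having degree two with both edges equal to the doubled edge $b$. The remaining ingredients---that an isolated vertex yields exactly one unlinked unknot and that $\La$ respects split unions---are immediate from the construction of $\La$ together with the standing convention that connected components are kept far apart (so that no component bounds another), and I do not anticipate genuine difficulty there.
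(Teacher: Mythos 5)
Your proposal is correct and follows essentially the same route as the paper's proof: dipole elimination removes exactly one Thompson-graph vertex joined to its neighbor by a doubled edge of opposite signs, which is undone by a type 3 move plus deletion of an isolated vertex, and the isolated vertex accounts for the unlinked unknot, with part (1) reduced to part (2). Your write-up is merely more explicit about the local analysis (naming the four dipole edges and showing the middle edge $c$ never lies in $T(\Delta)$), which the paper leaves implicit.
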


\begin{proof}
It suffices to prove part (2). To get the graph $T(\Delta')$ we remove from $T(\D)$ a vertex with exactly two edges, one upper and one lower, connecting it to another vertex of $T(\Delta)$.
Thus $\Gamma(\Delta')$ results from $\Gamma(\D)$ by an application of a move of type $3$ and the subsequent removal of a vertex which becomes isolated.
A move of type $3$ does not affect the associated link. Since an isolated vertex in a graph corresponds to an  unknot in a link diagram, its removal implies the removal of one unlinked unknot from the corresponding link.
\end{proof}

Let $\Delta$ be a reduced nontrivial diagram in $F$ we denote by  $\call(\Delta)$ the link represented by $\Delta$.

\begin{Lemma}\label{plus_one} The link $\call(\1\oplus\Delta)$ results from the link $\call(\Delta)$ by an addition of one unlinked unknot.
\end{Lemma}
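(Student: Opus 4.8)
The plan is to read off the Thompson graph of $\1\oplus\Delta$ directly and to recognize the extra piece as a detachable unknot. Recall that $\1\oplus\Delta=\pi\circ(\1+\Delta)\circ\pi\iv$, and that since $\Delta$ is reduced and nontrivial the diagram $\1\oplus\Delta$ is again reduced (by the remark following the definition of $\oplus$), so $\call(\1\oplus\Delta)$ is defined. Write $m$ for the bottom vertex of the top cell $\pi$. Under the addition $\1+\Delta$ this $m$ is exactly the initial vertex $\ini$ of the embedded copy of $\Delta$, so all inner vertices of $\Delta$ sit between $m$ and $\tau$, while the new initial vertex $\iota(\1\oplus\Delta)$ lies to the left of $m$.

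Next I would compute $T(\1\oplus\Delta)$ using Remark \ref{left}. The left bottom edge of the top cell $\pi$ is the single edge from $\iota(\1\oplus\Delta)$ to $m$; because the $\1$ summand leaves that edge untouched, it is literally the same edge as the left top edge of the bottom cell $\pi\iv$. Since $m$, being the initial vertex of the embedded $\Delta$, has no other incoming edge, this is the unique incoming edge of $m$ in $\1\oplus\Delta$, and the clause of Definition \ref{def:d} about a coinciding top-most and bottom-most incoming edge forces us to count it twice: once as an upper edge and once as a lower edge. All remaining left bottom edges of $(\1\oplus\Delta)^+$ and left top edges of $(\1\oplus\Delta)^-$ are exactly those of $\Delta$, so $T(\1\oplus\Delta)$ is a copy of $T(\Delta)$ (with $\ini$ renamed $m$) together with one new vertex joined to $m$ by one upper and one lower edge.

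Translating to signed graphs, $\Gamma(\1\oplus\Delta)$ is $\Gamma(\Delta)$ (up to a harmless relabeling of vertices, which $\La$ ignores) with a new leftmost vertex $\iota(\1\oplus\Delta)$ joined to its neighbour $m$ by one edge labeled $+$ and one edge labeled $-$, with no vertex or edge between them. This is precisely the local configuration handled in the proof of Lemma \ref{dipole_link}(2): a type $3$ move erases the two opposite-signed parallel edges without changing the link, and the new vertex is thereby left isolated. An isolated vertex corresponds to an unlinked unknot, so $\La(\Gamma(\1\oplus\Delta))=\La(\Gamma(\Delta))\sqcup(\text{unknot})$; that is, $\call(\1\oplus\Delta)$ is obtained from $\call(\Delta)$ by adding one unlinked unknot, as claimed.

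The only genuinely delicate point, and the step I would write out most carefully, is the bookkeeping of the leftmost edge: verifying that it is a single geometric edge that nonetheless contributes both the unique upper and the unique lower incoming edge of $m$ (the ``quotation-mark subgraph'' subtlety of Definition \ref{def:d}), and checking that nothing of $\Delta$ intrudes between $\iota(\1\oplus\Delta)$ and $m$ so that the type $3$ move is legitimate. Everything else is a direct application of Remark \ref{left} together with the unknot-removal mechanism already established in Lemma \ref{dipole_link}.
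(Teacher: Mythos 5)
Your proof is correct and follows essentially the same route as the paper: identify $T(\1\oplus\Delta)$ as $T(\Delta)$ plus one new vertex joined to $\ini$ by an upper and a lower edge, then use a type $3$ move and the isolated-vertex/unknot correspondence. You merely run the type $3$ move in the opposite direction (erasing the parallel edges rather than inserting them) and supply the diagram-level bookkeeping via $\pi\circ(\1+\Delta)\circ\pi\iv$ and Remark \ref{left} that the paper leaves implicit; this is a harmless, indeed welcome, elaboration.
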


\begin{proof}
To get from the Thompson graph $T(\Delta)$ to the Thompson graph $T(\1\oplus\Delta)$ one has to add a vertex to the left of the initial vertex of $T(\Delta)$ and connect it to the initial vertex of $T(\Delta)$ by two edges; an upper edge and a lower edge. Therefore, $\Gamma(\1\oplus\Delta)$ results from $\Gamma(\Delta)$ by the addition of an isolated vertex to the left of $\Gamma(\Delta)$ (hence, an addition of an unlinked unknot to the associated link) followed by a type $3$ move which does not affect the associated link.
\end{proof}

\begin{Lemma}\label{k+2}
Let $L$ be the link which consists of $u\ge 1$ unlinked unknots. Then, $L$ is represented by the element $x_{u-1}$ of Thompson group $F$. In particular, it is represented by a reduced diagram with $u+3$ vertices.
\end{Lemma}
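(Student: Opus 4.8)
The plan is to prove both claims simultaneously by induction on $u$, using the addition operation $\oplus$ as the engine. The inductive step is essentially free. By the identity $x_{i+1}=\1\oplus x_i$ recorded after Lemma \ref{lm1.5}, we have $x_{u-1}=\1\oplus x_{u-2}$ for every $u\ge 2$. Since $x_{u-2}\ne\1$, the diagram $\1\oplus x_{u-2}$ is reduced, and by Lemma \ref{lm1.5} its normal form is indeed $x_{u-1}$. Now Lemma \ref{plus_one} says that $\call(\1\oplus\Delta)$ is obtained from $\call(\Delta)$ by adjoining one unlinked unknot; applying this with $\Delta=x_{u-2}$ and using the induction hypothesis (that $\call(x_{u-2})$ is a disjoint union of $u-1$ unknots) shows that $\call(x_{u-1})$ is a disjoint union of $u$ unknots. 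Thus everything reduces to the base case $u=1$: that $\call(x_0)$ is a single unknot.

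For the base case I would compute the signed Thompson graph $\Gamma(x_0)$ directly from the reduced diagram of $x_0$ in Figure \ref{f:xx}. Labelling its four vertices $0,1,2,3$ from left to right along the longest positive path (so $\iota(\D)=0$, $\tau(\D)=3$), Remark \ref{left} identifies the upper edges of $T(x_0)$ with the left bottom edges of the cells of $\D^+$ and the lower edges with the left top edges of the cells of $\D^-$. Reading these off gives upper edges $(0,1)$ and $(0,2)$ and lower edges $(0,1)$ and $(1,2)$. Hence $\Gamma(x_0)$ has vertices $0,1,2$, a bigon between $0$ and $1$ consisting of a ``$+$'' edge and a ``$-$'' edge, a ``$+$'' edge $(0,2)$, and a ``$-$'' edge $(1,2)$.

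I would then reduce $\Gamma(x_0)$ to a single isolated vertex by local moves, which does not change the represented link. The two edges of the bigon on $\{0,1\}$ carry opposite signs and bound a region free of other vertices and edges, so a move of type $3$ erases both. Vertex $1$ then has degree one, and a move of type $1$ removes it together with $(1,2)$; now vertex $2$ has degree one, and a further move of type $1$ removes it together with $(0,2)$. What remains is the single vertex $0$, which corresponds to an unknot. Therefore $\call(x_0)=\La(\Gamma(x_0))$ is a single unknot, completing the base case.

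For the vertex count I would track the number of diagram vertices through the induction. The reduced diagram of $x_0$ has $4=1+3$ vertices, and the description in the proof of Lemma \ref{plus_one} (one new vertex is added to the left of the initial vertex when passing from $T(\Delta)$ to $T(\1\oplus\Delta)$) shows that $\1\oplus\Delta$ has exactly one more vertex than $\Delta$. Hence the reduced diagram of $x_{u-1}=\1\oplus x_{u-2}$ has $4+(u-1)=u+3$ vertices, as claimed. The only step requiring genuine computation is the base case identification of $\call(x_0)$ with the unknot; once $\Gamma(x_0)$ is written down explicitly, the reduction by moves of types $1$ and $3$ is routine, and both the inductive step and the vertex bookkeeping are formal consequences of Lemmas \ref{plus_one} and \ref{lm1.5}.
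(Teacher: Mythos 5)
Your proof is correct and follows essentially the same route as the paper: the paper likewise verifies the base case that $x_0$ represents the unknot (by inspecting its Thompson graph, Figure \ref{fig:x0}) and then concludes via $x_{i+1}=\1\oplus x_i$ together with Lemma \ref{plus_one}. Your explicit computation of $\Gamma(x_0)$, its reduction by moves of types 1 and 3, and the vertex bookkeeping merely spell out details the paper leaves to the figure and the reader.
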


\begin{proof}
It is easy to check that $x_0$ represents the unknot (see Figure \ref{fig:x0}). Since for all $i\ge 0$, we have, $x_{i+1}=\1\oplus x_i$, the result follows from Lemma \ref{plus_one}.
\end{proof}

 \begin{figure}[ht!]
\centering
\includegraphics[width=.65\columnwidth]{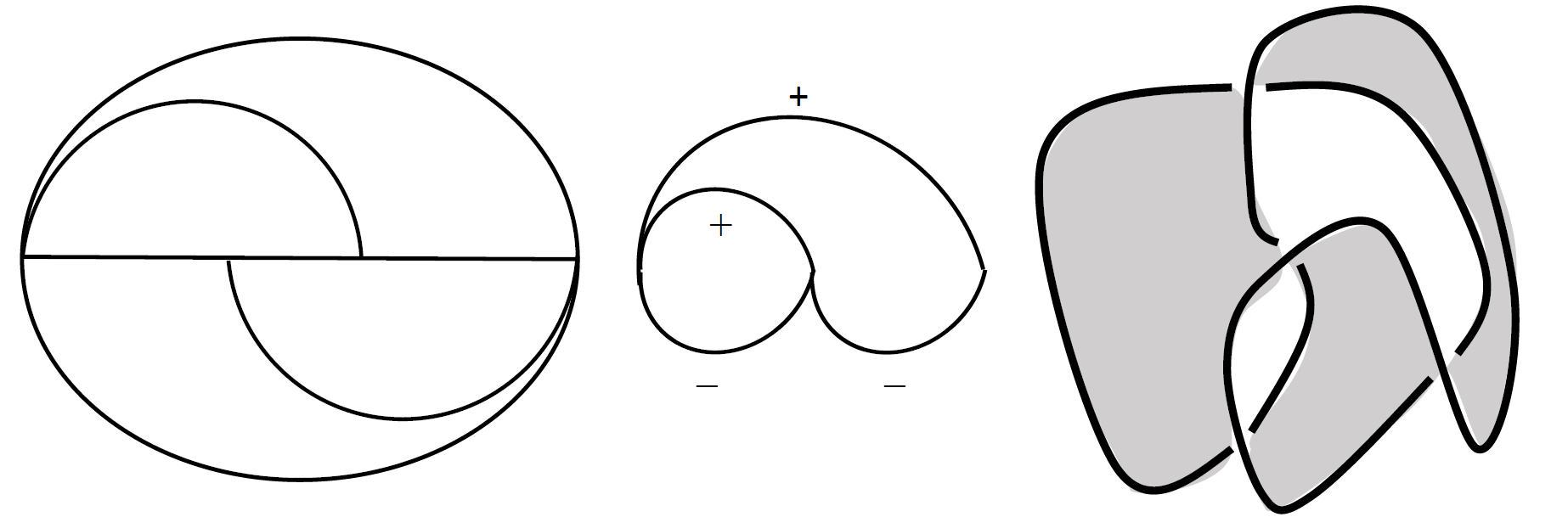}
\caption{The diagram of $x_0$, its Thompson graph and the corresponding shaded link.}
    \label{fig:x0}
\end{figure}

\begin{Lemma}\label{12n+u+1}
Let $L$ be a link which is not an unlink. Assume that $L$ contains $u$ unlinked unknots and has a link diagram with $n$ crossings. Then, there exists a reduced diagram $\Delta$ in $F$ with at most $12n+u+1$ vertices which represents $L$.
\end{Lemma}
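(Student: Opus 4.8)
The plan is to feed the output of Lemma \ref{l:cv} through the Thompson-graph-to-diagram passage of Section \ref{ss:tf}, and then to repair non-reducedness with Lemma \ref{reduced_link} and Lemma \ref{plus_one}, all the while tracking the vertex count. First I would apply Lemma \ref{l:cv} to a link diagram of $L$ with $n$ crossings, obtaining a signed Thompson graph $\Gamma$ with at most $12n+u$ vertices and $\La(\Gamma)=L$. By the discussion in Section \ref{ss:tf}, $\Gamma$ viewed as an unlabeled graph is the Thompson graph $T(\Delta)$ of some $(x,x)$-diagram $\Delta$ over $\la x\mid x=x^2\ra$ having no $\pi\iv\circ\pi$ dipoles; moreover $\Gamma(\Delta)=\Gamma$ as signed graphs, since $\Gamma$ being Thompson means its upper edges are labeled ``+'' and its lower edges ``$-$''. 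Because the vertex set of $T(\Delta)$ is that of $\Delta$ minus the terminal vertex, $\Delta$ has at most $12n+u+1$ vertices, and $\La(\Gamma(\Delta))=\La(\Gamma)=L$, so $\Delta$ represents $L$.

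Next I would reduce $\Delta$. As $\Delta$ has no $\pi\iv\circ\pi$ dipoles, every dipole it contains is of type $\pi\circ\pi\iv$; eliminating these one at a time, say $j$ of them, produces a reduced diagram $\Delta_0$. By the argument in the proof of Lemma \ref{reduced_link}, each such elimination deletes exactly one vertex of the Thompson graph, hence one vertex of the diagram, so $\Delta_0$ has at most $12n+u+1-j$ vertices. By Lemma \ref{reduced_link}(2), each elimination removes precisely one unlinked unknot from the represented link, so $\Delta_0$ represents the link $L_0=\call(\Delta_0)$ obtained from $L$ by deleting $j$ of its unlinked unknots.

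Finally I would add those $j$ unknots back via $\oplus$. Since $L$ is not an unlink, neither is $L_0$, as deleting unlinked unknots cannot turn a non-unlink into an unlink; hence $\Delta_0\neq\1$. Consequently each application of $\1\oplus(-)$ to a nontrivial reduced diagram again yields a reduced diagram, by the reducedness remark following the definition of $\oplus$. Setting $\Delta_1=\1\oplus(\1\oplus(\cdots(\1\oplus\Delta_0)))$ with $j$ summands $\1$, Lemma \ref{plus_one} gives that each $\1\oplus(-)$ adds one unlinked unknot, so $\call(\Delta_1)$ is $L_0$ with the $j$ unknots restored, namely $L$; and by the proof of Lemma \ref{plus_one} each $\1\oplus(-)$ adds exactly one vertex, so $\Delta_1$ has at most $(12n+u+1-j)+j=12n+u+1$ vertices, and is reduced. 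This $\Delta_1$ is the desired diagram.

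The crux is the vertex bookkeeping: the point is to notice that dipole elimination and the operation $\1\oplus(-)$ each change the vertex count by exactly one, in opposite directions, so that stripping off and then re-attaching the $j$ unlinked unknots is vertex-neutral and the bound $12n+u+1$ is preserved. The other place requiring care is verifying $\Delta_0\neq\1$, which is precisely where the hypothesis that $L$ is not an unlink enters and which guarantees that reducedness survives the $\1\oplus$ steps.
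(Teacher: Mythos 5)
Your proposal is correct and follows essentially the same route as the paper's own proof: apply Lemma \ref{l:cv}, pass to the $(x,x)$-diagram via Section \ref{ss:tf}, cancel the $\pi\circ\pi\iv$ dipoles using Lemma \ref{reduced_link}, and restore the removed unknots by adding $\1$ on the left via Lemma \ref{plus_one}, with the same one-vertex-per-step bookkeeping. Your explicit checks that $\Delta_0\neq\1$ and that reducedness survives the $\1\oplus$ operations are exactly the points the paper also notes, so there is nothing to add.
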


\begin{proof}
By Lemma \ref{l:cv} there exists a Thompson graph $\Gamma$ with at most $12n+u$ vertices such that $\La(\Gamma)=L$. Let $\Delta$ be the $(x,x)$-diagram over $\la x\mid x=x^2\ra$ such that $\Gamma(\Delta)=\Gamma$. Clearly, the number of vertices of $\Delta$ is at most $12n+u+1$. As noted above, $\Delta$ does not contain a $\pi\iv\circ\pi$ dipole. Let $\Delta'$ be the reduced diagram equivalent to $\Delta$ and assume that $k$ dipoles (of the form $\pi\circ\pi\iv$) should be canceled to get from $\Delta$ to $\Delta'$. Then, by  Lemma \ref{dipole_link}, the diagram $\Delta'$ represents the link resulting from $L$ by the removal of $k$ unlinked unknots. Note that $\Delta'$ is not the trivial diagram since $L$ is not a family of unlinked unknots.
Let $\Delta''$ be the diagram resulting from $\Delta'$ by the addition of the trivial diagram $\1$ $k$ times on the left. Then, $\Delta''$ is a reduced diagram which by Lemma \ref{plus_one} represents $L$. Since $k$ vertices were erased in the transition from $\Delta$ to $\Delta'$ and $k$ vertices were added in the transition from $\Delta'$ to $\Delta''$, the number of vertices of $\Delta''$ is at most $12n+u+1$.
\end{proof}

\begin{Lemma}
Let $L$ be a link represented by a link diagram with at most $n$ crossings. Assume that $L$ does not contain any unlinked unknot. Then, $L$ is represented by an element $g$ of Thompson group $F$ which satisfies the following.
\begin{enumerate}
\item The normal form of $g$ or its inverse starts with $x_0$.
\item The number of vertices in the reduced diagram of $g$ is at most $12n+1$.
\end{enumerate}
%whose normal form contains the letter $x_0$ or $x_0^{-1}$ such that the number of vertices of the reduced diagram of $g$ is at most $12n+1$.
\end{Lemma}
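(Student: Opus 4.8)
The plan is to reduce both conclusions to results already established, so that essentially no new computation is needed. For part (2) I would invoke Lemma \ref{12n+u+1} with $u=0$. Since $L$ contains no unlinked unknot it is in particular not an unlink, so the hypotheses of that lemma are satisfied, and it yields a reduced diagram $\Delta$ representing $L$ with at most $12n+0+1=12n+1$ vertices. Writing $g$ for the element of $F$ represented by $\Delta$, this $\Delta$ is exactly the reduced diagram of $g$, which already gives part (2). The real content of the statement therefore lies entirely in verifying that this same $g$ satisfies the normalization of part (1).

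For part (1) I would prove the sharper fact that \emph{any} reduced diagram $\Delta$ representing a link with no unlinked unknot has the property that its Thompson graph $T(\Delta)$ contains an edge joining the initial vertex $0$ to some vertex $j>1$. Enumerate the vertices of $\Delta$ from left to right as $0,1,\dots,s$ with $\iota(\Delta)=0$. Suppose, toward a contradiction, that $T(\Delta)$ contained no such edge, i.e.\ vertex $0$ were joined only to vertex $1$ by upper and/or lower edges. Then, exactly as in the proof of Lemma \ref{lm2}, $\Delta$ would split as $\Delta=\1\oplus\Delta'$ for a reduced diagram $\Delta'$. Since $\Delta$ is nontrivial ($L$ has crossings), $\Delta'$ would be nontrivial and reduced, and Lemma \ref{plus_one} would force $L=\call(\Delta)=\call(\1\oplus\Delta')$ to contain an unlinked unknot, contrary to the hypothesis. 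Hence $T(\Delta)$ must contain an edge $(0,j)$ with $j>1$.

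Finally I would translate this edge into the asserted form of the normal form. By Remark \ref{left} the edge $(0,j)$ is either an upper edge (a left bottom edge of a cell of $\Delta^+$) or a lower edge. If it is upper, then, again as in the proof of Lemma \ref{lm2}, the positive part of the normal form of $g$ begins with $x_0$, so the normal form of $g$ starts with $x_0$. If instead it is lower, I would pass to $\Delta\iv$, whose upper and lower edges are those of $\Delta$ interchanged while the left-to-right vertex labelling and the distinguished vertex $0$ are preserved; then $T(\Delta\iv)$ has an \emph{upper} edge $(0,j)$ with $j>1$, and the preceding case applied to $g\iv$ shows that the normal form of $g\iv$ starts with $x_0$. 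In either case part (1) holds.

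I expect the only genuine subtlety to be the bookkeeping that vertex $0$ being joined only to vertex $1$ really forces the splitting $\Delta=\1\oplus\Delta'$, and that $\Delta'$ inherits both reducedness and nontriviality. However this is precisely what is carried out in the proof of Lemma \ref{lm2} together with the remark following the definition of $\oplus$, so the argument requires no new ideas beyond assembling Lemmas \ref{12n+u+1}, \ref{plus_one}, \ref{lm2} and \ref{lm1.5}.
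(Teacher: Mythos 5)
Your proof is correct and takes essentially the same route as the paper: both obtain $\Delta$ from Lemma \ref{12n+u+1} with $u=0$ (giving part (2) immediately), and both establish part (1) by contradiction, using the splitting $\Delta=\1\oplus\Delta'$ together with Lemma \ref{plus_one} to produce a forbidden unlinked unknot. The only difference is cosmetic: the paper states the contradiction hypothesis as ``the normal form of $\Delta$ contains neither $x_0$ nor $x_0^{-1}$'' and gets the splitting directly from the normal-form correspondence, whereas you state it as ``$T(\Delta)$ has no edge $(0,j)$ with $j>1$'' and re-run the corresponding step of the proof of Lemma \ref{lm2}, then translate back to the normal form via the upper/lower edge dichotomy --- the two formulations are equivalent.
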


\begin{proof}
Let $\Delta$ be the reduced diagram from Lemma \ref{12n+u+1} which represents $L$. Then, the number of vertices of $\Delta$ is at most $12n+1$. If the normal form of $\Delta$ does not contain $x_0$ nor $x_0\iv$, then, $\Delta=\1\oplus\Delta'$ for some reduced diagram $\Delta'$. By Lemma \ref{plus_one}, the link $L$ represented by $\Delta$ contains an unlinked unknot, a contradiction.
\end{proof}

Let $L$ be a link. Jones \cite{Jo} defined the Thompson index of $L$ as the minimal number of vertices in the diagram representing an element $g$ which represents $L$.  Lemmas \ref{k+2} and \ref{12n+u+1} imply the following.

\begin{Theorem}\label{t:6} The Thompson index of a link containing $u$ unlinked unknots and represented by a link diagram with $n$ crossings does not exceed $12n+u+3$.
\end{Theorem}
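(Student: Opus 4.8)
The plan is to deduce the theorem directly from the two preceding lemmas by a dichotomy on whether $L$ is an unlink. Recall that the Thompson index of $L$ is the least number of vertices occurring in a reduced diagram $\Delta$ in $F$ with $\call(\Delta)=L$ (equivalently, among the diagrams of elements $g$ representing $L$). So it suffices to exhibit, in each case, a single reduced diagram representing $L$ whose number of vertices is at most $12n+u+3$.

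First I would treat the case in which $L$ is an unlink, that is, $L$ consists of $u\ge 1$ unlinked unknots. Here Lemma \ref{k+2} supplies a reduced diagram (namely that of $x_{u-1}$) representing $L$ with exactly $u+3$ vertices. Since $n\ge 0$ we have $u+3\le 12n+u+3$, so the Thompson index is at most $12n+u+3$ in this case. This is precisely where the additive constant $3$ in the statement originates: when $n=0$ the link is forced to be an unlink, and the bound $u+3$ is attained, so no smaller constant would suffice uniformly.

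In the complementary case $L$ is not an unlink, and then Lemma \ref{12n+u+1} directly produces a reduced diagram $\Delta$ in $F$ with at most $12n+u+1$ vertices representing $L$. As $12n+u+1\le 12n+u+3$, the Thompson index is again bounded by $12n+u+3$, and combining the two cases yields the bound in full generality. The substance of the argument is entirely carried by Lemmas \ref{k+2} and \ref{12n+u+1}; the only points needing (minor) attention are the case split itself and the elementary observation that the single constant $12n+u+3$ dominates both sharper case bounds $u+3$ and $12n+u+1$. I therefore expect no genuine obstacle here beyond this bookkeeping.
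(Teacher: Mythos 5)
Your proof is correct and is essentially identical to the paper's argument: the paper derives Theorem \ref{t:6} exactly by combining Lemma \ref{k+2} (the unlink case, bound $u+3$) with Lemma \ref{12n+u+1} (the non-unlink case, bound $12n+u+1$) via the same dichotomy and trivial domination by $12n+u+3$. Your side remark that the constant $3$ is optimal (i.e., that the index of the $u$-component unlink is exactly $u+3$) is plausible but not established in the paper and is not needed for the bound.
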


\begin{Remark} \label{r:0} It is known \cite{BCS} that the word length of an element $g$ from $F$ in the generators $x_0, x_1$ does not exceed 3 times the number of vertices in the diagram of $g$.  Thus Theorem \ref{t:6} gives a linear upper bound on the word length of an element $g$ representing $L$ in terms of the number of crossings in a link diagram of $L$.
\end{Remark}

\subsection{Some open questions}

\subsubsection{Unlinked elements of $F$}

Let us call an element $g$ of $F$ {\it unlinked} if the link $\call(g)$ is an unlink. The following problem seems difficult

\begin{Problem} \label{q1} Describe the set of unlinked elements of $F$.
\end{Problem}

Theorem \ref{t:6} implies that the problem of recognizing an unlinked element of $F$ is polynomially equivalent to the problem of recognizing an unlink (here elements of $F$ are represented by diagrams or pairs of trees or words in $\{x_0,x_1\}$ and links are represented by link diagrams). By the famous result of Haken \cite{Haken}, the set of unlinked elements is recursive. By \cite{HLP}, it is in NP and by \cite{Kup} it is also in coNP (modulo the generalized Riemann hypothesis).

\subsubsection{Positive links}

One can easily verify that many positive elements of $F$ (i.e., elements whose normal form has trivial negative part) correspond to unlinks (examples: $x_i, x_ix_{i+1}$, etc.). There are, nevertheless, positive elements of $F$ that correspond to non-trivial links (for example, $x_0^2x_2^2$).

\begin{Problem}\label{q2} Which links correspond to positive elements of $F$? When does a positive element of $F$ represent an unlink?
\end{Problem}

\subsubsection{Random links} The Jones' theorem that elements of $F$ represent all links and Remark \ref{r:0} relating the number of crossings in a link diagram with the word length of a corresponding element of $F$ suggests the following

\begin{Problem}\label{q3} Consider a (simple) random walk $w(t)$ on $F$, say, with generators $x_0^{\pm 1}, x_1^{\pm 1}$. What can be said about the link corresponding to $w(t)$? In particular, what is the probability that the link, corresponding to $w(t)$ is an unlink?
\end{Problem}

\begin{minipage}{3 in}
Gili Golan\\
Bar-Ilan University\\
gili.golan@math.biu.ac.il
\end{minipage}
%\thanks{The second author was partially supported by BSF T-2012-238 and by the Ministry of Science and Technology of Israel.}
\begin{minipage}{3 in}
Mark Sapir\\
Vanderbilt University\\
m.sapir@vanderbilt.edu
\end{minipage}

%\thanks{The work of the first author was partially supported by the NSF grant DMS-1201452
%and the Sloan Research Fellowship grant BR 2011-105. The work of the second author was partially supported by BSF T-2012-238 and the Ministry of Science and Technology of Israel. The work of the third author was partially supported by NSF DMS-1318716 and DMS-1261557 and BSF 2010295.}

\end{document}